\newtheorem{thm}{Theorem}[section]
\newtheorem{prop}[thm]{Proposition}
\newtheorem{lemma}[thm]{Lemma}
\theoremstyle{definition}
\newtheorem{defn}[thm]{Definition}
\newtheorem{exmpl}[thm]{Example}
\theoremstyle{remark}
\newtheorem*{rmk}{Remark}
\DeclareMathOperator{\Spec}{Spec}
\DeclareMathOperator{\Hilb}{Hilb}
\DeclareMathOperator{\gl}{GL}
\DeclareMathOperator{\sll}{SL}
\DeclareMathOperator{\spp}{Sp}
\DeclareMathOperator{\so}{SO}
\title{Special Cases of the Shafarevich Conjecture for Complete Intersections in Abelian Varieties}
\author{Frank Lu}
\date{}
\begin{document}

\maketitle
\begin{abstract}
In this paper, we prove the Shafarevich conjecture for certain complete intersections of hypersurfaces in abelian varieties defined over a number field $K$ using the Lawrence-Venkatesh method. The main new inputs we need are computation of certain Euler characteristics of these complete intersections and a big monodromy statement for the variation of Hodge structure arising from the middle cohomology of a family of such complete intersections. Following \cite{ls25}, we prove the latter by relating this monodromy statement to a statement about Tannaka groups, which we then convert into a combinatorial statement.
\end{abstract}
\tableofcontents
\section{Introduction}
Let $A$ be an abelian variety with dimension $n,$ defined over a number field $K.$ Let $S$ be a set of places of $K,$ including all of the archimedean places and the places of bad reduction of $A,$ and let $\mathcal{O}_{K, S}$ be the ring of $S$-integers of $K.$ We can view $A$ as the generic fiber of an abelian scheme $\mathcal{A}$ over $\mathcal{O}_{K, S}.$ 
\par We say that a subvariety $X \subset A$ has \textit{good reduction} at a prime $\mathfrak{p} \not \in S$ if its (Zariski) closure in $\mathcal{A}$ is smooth at $\mathfrak{p}.$ The goal of this paper is to prove the Shafarevich conjecture for certain classes of complete intersections of hypersurfaces in an abelian variety.
\par More specifically, we consider these kinds of subvarieties:
\begin{defn}[See pp. 3-4 of \cite{km24}]
Given a subvariety $X \subset A,$ the \textit{stabilizer subgroup} $\text{Stab}_X(A)$ is the $\overline{K}$-group scheme whose functor of points, for an $\overline{K}$-scheme $T,$ is given by \[\text{Stab}_X(A)(T) = \{a \in A(T)|X_T + a= X_T\}.\]
\par Then, we say that $X \subset A$ is \textit{nondivisible} if this stabilizer subgroup is trivial.
\end{defn}
\par Our main theorem is then the following.
\begin{thm}\label{thm: main theorem}
In the above setting, let $\phi$ be an ample Neron-Severi class, and let $d_1, d_2, \ldots, d_r$ be positive integers. 
\par Then, if $n \geq 10r^4 + 1000,$ then there are only finitely many isomorphism classes of smooth, geometrically connected, and nondivisible subvarieties $X \subset A$ which are the complete intersection of hypersurfaces representing the Neron-Severi classes $d_1\phi, d_2\phi, \ldots, d_r\phi$ with good reduction outside of $S.$ Furthermore, if the $d_i$ are all equal, then the conclusion holds for $n \geq 10r^2 + 1000.$
\end{thm}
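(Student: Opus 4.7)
The plan is to deploy the Lawrence-Venkatesh method. Let $\mathcal{M}$ be the moduli stack parameterizing tuples $(Y_1,\dots,Y_r)$ of hypersurfaces in $A$ with class $[Y_i] = d_i\phi$ whose scheme-theoretic intersection is a smooth, geometrically connected, nondivisible subvariety; after possibly enlarging $S$ to a set $S'$, the universal family $\pi : \mathcal{X} \to \mathcal{M}$ spreads to a smooth proper family over $\mathcal{O}_{K,S'}$, and every $X$ as in the theorem corresponds to an $\mathcal{O}_{K,S'}$-point. Fix an auxiliary prime $p$ of good reduction and form the primitive summand $\mathbb{V}_{\mathrm{prim}}$ of the middle cohomology local system $R^{n-r}\pi_*\mathbb{Q}_p$. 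The Lawrence-Venkatesh formalism then reduces the desired finiteness to two inputs, in analogy with the hypersurface case: a \emph{big monodromy} statement asserting that the geometric monodromy representation on $\mathbb{V}_{\mathrm{prim}}$ is an open subgroup of the full symplectic or orthogonal group (depending on the parity of $n-r$), and a dimension count showing that the local image of the $p$-adic period map at a generic point of $\mathcal{M}$ strictly exceeds the dimension of the fiber of the crystalline $p$-adic Galois representation.

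Using the Koszul resolution of $\mathcal{O}_X$ on $A$ together with the conormal sequence, I would next compute $\chi(X, \Omega_X^k)$ explicitly as a polynomial expression in $n$, $r$, and the $d_i$; combined with the Lefschetz hyperplane theorem this determines every primitive Hodge number $h^{p,q}_{\mathrm{prim}}(X)$. These numbers both supply the Hodge-theoretic input for the period dimension comparison and feed the combinatorial input to the big monodromy step below.

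The hardest step, following \cite{ls25}, is to reduce big monodromy to a statement about the Tannaka group $G$ of an associated semisimple perverse sheaf on $A$ in the sense of Kr\"amer-Weissauer. Using convolution on $A$, one builds a perverse sheaf whose Tannaka group controls the monodromy of $\mathbb{V}_{\mathrm{prim}}$; standard arguments first pin down $G$ to be reductive and acting irreducibly on a symplectic or orthogonal standard representation, and the task reduces to ruling out every proper reductive subgroup of the full $\spp$ or $\so$ acting irreducibly on a space of the expected dimension. This is converted into a combinatorial statement about character multiplicities, which one checks using the Hodge and weight data of the previous step. The bound $n \geq 10r^4 + 1000$ arises from enumerating possible subgroup embeddings parameterized by ordered choices among the $r$ factors $d_i\phi$; when all the $d_i$ coincide, a residual $S_r$-symmetry cuts the enumeration down to multisets, yielding the improved $10r^2 + 1000$ bound.

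Given big monodromy and the period dimension comparison, the Lawrence-Venkatesh argument --- combining Faltings' finiteness of $p$-adic Galois representations to reduce infinitely many $X$ to a single Galois orbit, and then using the dimension count to contradict the $p$-adic rigidity of the crystalline fiber --- produces the claimed finiteness. I expect the principal obstacle to be the Tannaka/combinatorial step, as both the Euler characteristic computation and the $p$-adic period comparison are relatively mechanical once the relevant Hodge numbers are in hand.
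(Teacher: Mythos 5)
Your proposal follows the same broad circle of ideas as the paper (Lawrence--Venkatesh input, Euler characteristics of $\Omega^q_X$ via the Koszul and conormal sequences, and a reduction of big monodromy to a Tannakian statement in the style of \cite{ls25} and \cite{jklm23}), but two of the steps you treat as routine are exactly where the content lies, and as stated they have genuine gaps. First, the parameter space: you take a moduli of tuples $(Y_1,\dots,Y_r)$ of hypersurfaces and assert that every $X$ in the theorem gives an $\mathcal{O}_{K,S'}$-point after enlarging $S$ once and for all. Good reduction in this paper is a condition on $X$ itself (smoothness of its closure in $\mathcal{A}$), not on any chosen defining hypersurfaces, and there is no reason a defining tuple spreads out integrally over a fixed $S'$ independent of $X$; with infinitely many candidate $X$ you cannot enlarge $S$ per variety. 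The paper instead works with the Hilbert scheme $\Hilb^P_A$ over $\mathcal{O}_{K,S}$ (all these complete intersections share a Hilbert polynomial by the $P_0$ computation), restricts to the constructible locus of points that are smooth, nondivisible, with ample normal bundle and \emph{numerically like} the given complete intersection, and then quotes \cite[Theorem D]{km24}, whose hypotheses also include a specific numerical condition $2\sum_q|\chi(X,\Omega^q_X)|^2\le\bigl(\sum_q|\chi(X,\Omega^q_X)|\bigr)^2$ that the paper verifies separately (this, not a loose ``period dimension exceeds Galois fiber dimension'' count, is the quantitative Lawrence--Venkatesh input, and it requires a real argument in dimensions $2$ and $4$).

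Second, and more seriously, your account of the big monodromy step stops exactly where the difficulty begins, and your explanation of the bounds is not correct. After the Tannakian reduction, the classification (simple group with minuscule representation) leaves: standard representations of $\sll,\spp,\so$; wedge powers $\bigwedge^k$ of the standard representation of $\sll_m$ with $2\le k\le m-2$; spin representations; and $E_6$, $E_7$. The spin and exceptional cases are eliminated by a divisibility argument ($6$ divides the topological Euler characteristic), not by enumerating subgroups of $\spp$ or $\so$. The wedge-power case is the crux: it is converted into the explicit system equating the weight multiplicities of $\bigwedge^k$ of an $m$-dimensional representation (encoded by a function $m_H$) with $|\chi(X,\Omega^q_X)|$ for $0\le q\le n-r$, and ruling out solutions is an asymptotic analysis in which $\chi(X,\Omega^q_X)$ is compared with generalized Eulerian numbers $E_r(n,q)\approx\binom{q+r-1}{r-1}(q+r)^n$. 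The thresholds $n\ge 10r^4+1000$ and $n\ge 10r^2+1000$ come out of these explicit inequalities (e.g.\ needing $(1+\tfrac{1}{r^2+2r})^n$ to dominate polynomial-in-$n$ error factors), and the improvement when all $d_i$ are equal comes from the exact identity $\chi(X,\Omega^q_X)=\tfrac{(H^n)}{n!}E_r(n,q)$, which gives sharper two-sided bounds than the general mixed-degree case -- not from any ``$S_r$-symmetry cutting an enumeration of subgroup embeddings down to multisets.'' Without identifying this combinatorial system and carrying out the asymptotic estimates, the proposal has no mechanism producing the stated bounds, so the key step of the theorem is missing.
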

We begin by situating this result among other Shafarevich-style results. Classically, Faltings proved the Shafarevich conjecture for curves and abelian varieties in \cite{faltings}. For higher-dimensional varieties in general, the Shafarevich conjecture has been proven for specific families, including for certain hyper-Kahler varieties as in \cite{andreK3}, complete intersections with Hodge level at most $1$ by \cite{JavanLoughranLowHodge}, and flag varieties as in \cite{JLflag}.
\par In the case of subvarieties of an abelian variety, much more is known. There are two main prior results for subvarieties of abelian varieties, occupying complementary dimensional ranges. Lawrence and Sawin in \cite{ls25} prove the Shafarevich conjecture for ample hypersurfaces in abelian varieties (covering $r = 1$ in our setup). Meanwhile, Kramer and Maculan in \cite{km24} prove the Shafarevich conjecture for many subvarieties whose dimension is less than half of that of the abelian variety (covering $n < 2r$).
\par Our main proof strategy uses same circle of ideas as these two papers, utilizing the method first introduced by Lawrence and Venkatesh in \cite{lv18}. In their paper, they provide an alternate proof of the Mordell conjecture which, unlike the original proof of Faltings in \cite{faltings}, doesn't require the use of the Tate and Shafarevich conjectures for abelian varieties. 
\par We briefly summarize the Lawrence-Venkatesh method. Suppose that we are interested in proving the finiteness of $\mathcal{O}_{K, S}$ points for some smooth $K$-variety $T.$ The main idea is to study the variation of Hodge structures arising from the cohomology of a smooth projective family over $T.$ Specifically, one shows two things:
\begin{enumerate}
    \item The integral points land in a small-dimensional locus of the appropriate flag variety parametrizing the possible values for the Hodge filtration. 
    \item Over all $p$-adic points, the Zariski closure of the attained flags in the flag variety has sufficiently large dimension. 
\end{enumerate}
Combining these proves that the integral points are not Zariski dense in $T.$ 
\par By letting $T$ be an appropriate moduli space of varieties, one can utilize the Lawrence-Venkatesh approach to prove various Shafarevich-style conjectures. For instance, Lawrence and Venkatesh also prove in \cite{lv18} that the set of integral points on the moduli space of smooth hypersurfaces with given degree $d$ in $\mathbb{P}^n$ is not Zariski dense.
\par In order to carry out this method, one needs two inputs: the dimensions of the graded pieces of the flags satisfy a mild numerical condition, and the monodromy of this variation of Hodge structures (viewed over the complex numbers) is sufficiently big. This second input is the key theoretical challenge in utilizing the Lawrence-Venkatesh method in higher-dimensional settings.
\par Indeed, in order to prove finiteness using this method, one would need to show that the monodromy of this variation of Hodge structures is big when restricting to positive-dimensional subvarieties of the moduli space $T.$ One would then apply this method to appropriate smaller-dimensional subvarieties of $T$ in order to show that the Zariski closure of the integral points cannot be positive-dimensional. However, the monodromy input in the hypersurfaces result of \cite{lv18} is \cite{beauville}, which only proves big monodromy for the whole moduli space, and not other positive-dimensional subvarieties.
\par To obtain an actual finiteness result for integral points of $T,$ \cite{ls25} introduce a new approach to studying the monodromy, which was done more broadly by Javanpeykar, Kramer, Lehn, and Maculan in \cite{jklm23}. In the situation where one is studying a family $\mathcal{X} \rightarrow T$ of subvarieties of an abelian variety $A,$ this approach allows them to obtain a big monodromy statement for all positive-dimensional subvarieties of $T.$ The idea behind this approach is as follows: one can equip an appropriate quotient of the category of perverse sheaves on $A_{K'}$ with a convolution operation for any algebraically closed field extension $K'$ of $K,$ involving the derived pushforward along the multiplication map $\sigma: A_{K'} \times A_{K'} \rightarrow A_{K'}.$ This gives us a Tannakian category of perverse sheaves, first introduced and studied by \cite{kram2015}. 
\par Now, suppose we are given a family $\mathcal{X} \rightarrow T,$ and $K'$ is the field corresponding to a geometric generic point $\overline{\eta}$ of $T.$ We consider the Tannakian subcategory generated by intersection complex $\delta_{\mathcal{X}_{\overline{\eta}}}$ of $\mathcal{X}_{\overline{\eta}} \subset A_{K'}.$ Then, unless this family is essentially constant, in order to prove big monodromy for $\mathcal{X} \rightarrow T,$ it suffices (see \cite[Theorem 4.10]{jklm23} for a precise statement) to show that the Tannaka group of this Tannakian category is sufficiently large in the following sense.
\begin{defn}[See pp.3,5 of \cite{jklm23}]
\par We say that a subvariety $\mathcal{X} \subset A_S$ is \textit{symmetric up to translation} if there exists an element $a \in A(S)$ such that $\mathcal{X} = -\mathcal{X} + a.$
\par Now, suppose that $\mathcal{C}$ be the Tannakian category described above, with the faithful exact functor $\omega: \mathcal{C} \rightarrow \text{Vect}_F$ and Tannaka group $G.$ Let $V = \omega(\delta_{\mathcal{X}_{\overline{\eta}}}),$ and let $G^*_{X, \omega}$ be the derived subgroup of the identity component of $G.$ We say that the group $G^*_{X, \omega}$ is \textit{big} if
\begin{enumerate}
    \item $G^*_{X, \omega} = \sll(V)$ if $\mathcal{X}$ is not symmetric up to translation,
    \item $G^*_{X, \omega} = \spp(V, \theta)$ if $\mathcal{X}$ is symmetric up to translation and $X$ is odd-dimensional, 
    \item $G^*_{X, \omega} = \so(V, \theta)$ if $\mathcal{X}$ is symmetric up to translation and $X$ is even-dimensional.
\end{enumerate}
Here, $\theta$ is a bilinear form arising from Poincare duality; see \cite[p.2]{jklm23} for more details.
\end{defn}
\par Thus, to prove Theorem \ref{thm: main theorem}, it suffices to prove the following.
\begin{thm}\label{thm: big monodromy main theorem}
Suppose that $T$ is a smooth irreducible variety over an algebraically closed field $k,$ with generic point $\eta.$ Let $A$ be an abelian variety over $k,$ and $\mathcal{X} \subset A_T$ a closed subscheme which is smooth over $T.$ 
\par Suppose that $\mathcal{X}_{\overline{\eta}},$ the fiber at the geometric point over $\eta,$ is the complete intersection of hypersurfaces representing the Neron-Severi classes $d_1\phi, d_2\phi, \ldots, d_r\phi,$ where $\phi$ is an ample Neron-Severi class of $A$ and $d_1, d_2, \ldots, d_r$ are positive integers. Suppose furthermore that this fiber is smooth, geometrically connected, has ample normal bundle, and is nondivisible. 
\par Then, for $n \geq 10r^4 + 1000,$ $G^*_{X, \omega}$ is big. Furthermore, if the $d_i$ are all equal, then the above statement holds for $n \geq 10r^2 + 1000.$
\end{thm}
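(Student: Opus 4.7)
The plan is to follow the Tannakian strategy introduced in \cite{ls25} for hypersurfaces and generalize it to complete intersections of $r$ hypersurfaces. The first step is to set up the ambient Tannakian framework: the quotient of the category of perverse sheaves on $A_{\overline{k(\eta)}}$ by the negligible objects (those with vanishing Euler characteristic), equipped with the convolution $M * N := R\sigma_*(M \boxtimes N)$ along the multiplication map, is a neutral Tannakian category in the sense of \cite{kram2015}. Let $\mathcal{C}$ be the Tannakian subcategory generated by $\delta := \delta_{\mathcal{X}_{\overline{\eta}}}$, let $G$ be its Tannaka group, and $V := \omega(\delta)$ the standard representation; the task is then to show $G^{*\circ}$ is big.

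The second step, which reduces the theorem to a combinatorial problem, is a Tannakian classification result in the spirit of Larsen's alternative, essentially available in \cite[Section 8]{jklm23}: either $G^{*\circ}$ is already big in the required sense, or $(G^{*\circ}, V)$ belongs to a short explicit list of exceptional pairs --- simple classical or exceptional groups acting via minuscule or otherwise small representations. For every exceptional pair, $\dim V$ is bounded above by a polynomial in the rank of $G^{*\circ}$, and $\dim \mathrm{End}_G(V^{\otimes m})$ exceeds the generic $SL/Sp/SO$ value by a computable amount for small $m$.

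To rule out the exceptional pairs, I would proceed in two prongs. First, compute $\dim V = |\chi(\mathcal{X}_{\overline{\eta}})|$ using a Chern class/Koszul resolution argument for smooth complete intersections in an abelian variety, obtaining $|\chi(\mathcal{X}_{\overline{\eta}})| = \phi^n \cdot P(d_1,\ldots,d_r)/n!$ for an explicit symmetric polynomial $P$ whose leading behavior in $n$ (for fixed $d_i$) grows much faster than any exceptional $\dim V$ allowed by the classification, and which simplifies dramatically in the equal-$d_i$ case. Second, upper-bound the composition factor multiplicities of $\delta^{*m}$ via a geometric analysis of the iterated sum map $\sigma^m : X^m \to A$, using nondivisibility and the ample normal bundle to control its fibers; by the Tannakian dictionary this directly bounds $\dim \mathrm{End}_G(V^{\otimes m})$ from above.

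The principal obstacle, as the introduction signals, is the combinatorial step of converting these geometric inputs into a case-by-case exclusion of every exceptional pair. For $r=1$ this was achieved in \cite{ls25} by a single representation-theoretic inequality; for general $r$ the Euler characteristic is a genuine symmetric polynomial in $d_1,\ldots,d_r$ and the combinatorial identity must control every one of its monomials. The gap between $n \geq 10r^4 + 1000$ and $n \geq 10r^2 + 1000$ strongly suggests that the extra $S_r$-symmetry present when the $d_i$ coincide collapses whole families of exceptional cases at once, so I expect the argument at this final stage to split into a symmetric and an asymmetric subcase, the asymmetric one requiring noticeably more delicate bookkeeping and yielding the worse bound.
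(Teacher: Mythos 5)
Your high-level Tannakian frame is right---pass to the Tannakian category generated by $\delta_{\mathcal{X}_{\overline{\eta}}}$, observe via \cite{jklm23} that the group must be simple with $\omega(\delta)$ minuscule (nondivisibility plus the non-product property of complete intersections), and then classify the possible pairs. But your plan for eliminating the exceptional pairs has a genuine gap, and it is precisely the gap that the paper's combinatorial statement is designed to close.

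You assert that for every exceptional pair $\dim V$ is bounded above by a polynomial in the rank, so that sheer size of $|\chi_{\mathrm{top}}(\mathcal{X}_{\overline{\eta}})|$ eliminates them. This is false for the pair $(\sll_m, \bigwedge^k \mathrm{std})$ with $2\le k\le m-2$: the dimension is $\binom{m}{k}$, which for $k$ near $m/2$ grows exponentially in the rank, and hence can match any Euler characteristic you produce. The paper singles this out as ``the case that poses the most difficulty'' precisely because no dimension count or growth estimate can kill it. The spin and $E_6$, $E_7$ cases are disposed of not by a growth estimate but by an arithmetic one: Proposition \ref{prop: all but the wedge representations ruled out} shows $\chi_{\mathrm{top}}(X)$ is divisible by $6$, whereas those representations have dimensions that are powers of $2$, or $27$, or $56$.

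The mechanism the paper actually uses for the wedge-power case is absent from your outline. By $p$-adic Hodge theory the Hodge decomposition of $\omega(\delta)$ has graded dimensions $|\chi(X,\Omega^q_{X/K})|$, while the minuscule representation $\bigwedge^k \mathrm{std}$ of $\sll_m$ has weight multiplicities given by products of binomial coefficients $\binom{m_H(i)}{m_S(i)}$. Matching these gives the system of equations in Lemma \ref{lem: numerical approach foundation}, indexed by $q=0,\dots,n-r$, which is then shown in Theorem \ref{thm: big monodromy statement} to have no solution for $n$ large via asymptotics for the generalized Eulerian numbers $E_r(n,q)$. Your proposed substitute---bounding $\dim\mathrm{End}_G(V^{\otimes m})$ through the geometry of iterated sum maps---is a different style of argument; you have not explained how it would distinguish $\bigwedge^k$ from the standard representation, and it is not what the paper does. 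Finally, your heuristic for the $10r^4$ versus $10r^2$ discrepancy is off: both cases confront the same wedge-power obstruction, and the gap arises because the Euler characteristics for distinct $d_i$ are genuine symmetric polynomials in the $d_i$ whose asymptotic estimates in the appendix are substantially harder to control than the single generalized Eulerian number appearing when all $d_i$ coincide, not because extra $S_r$-symmetry eliminates whole families of exceptional pairs.
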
 
\par To prove that this group $G^*_{X, \omega},$ we can use the fact that, in the cases considered in our paper, there are very few possibilities for what the pairs of $(G_{X, \omega}^*, \omega(\delta_{\mathcal{X}_{\overline{\eta}}}))$ can be; see \cite[Theorem 6.1]{jklm23}, for instance. Therefore, to prove the Tannaka group is big, it suffices to rule out a few other cases, as enumerated in \cite[Theorem 5.1.5]{Green_2013}, for instance.
\par Currently, there are two main strategies to rule out these cases. In \cite{jklm23}, they employ geometric arguments to rule out the other cases, utilizing the fact that they are interested in high-codimensional subvarieties. Meanwhile, in \cite{ls25}, they rule out all but one of the cases by considering the dimension of $\omega(\delta_{\mathcal{X}_{\overline{\eta}}}).$ The final case they need to rule out is the most intricate, and one can reduce ruling out this case to showing that an intricate system of equations has no solution.
\par We adopt this latter strategy, where the equations we obtain for this final case take the same form. The corresponding statement we show is the following.
\begin{thm}\label{thm: big monodromy statement intro version}
Suppose that $X$ is the complete intersection of $r$ hypersurfaces in $A,$ all representing positive integer multiples of the same ample Neron-Severi class $H,$ namely by $d_1H, d_2H, \ldots, d_rH.$ Then, if $\dim A = n \geq 10r^4 + 1000,$ there is no solution in the functions $m_H(w): \mathbb{Z} \rightarrow \mathbb{Z},$ positive integers $k$ between $2$ and $m - 1$ (where $m = \sum\limits_i m_H(i)$), and integer $s,$ to the following system of equations, indexed by $q = 0, 1, 2, \ldots, n-r:$
\[\sum\limits_{\substack{m_S: \mathbb{Z} \rightarrow \mathbb{Z} \\ 0 \leq m_S(i) \leq m_H(i) \\ \sum m_S(i) = k \\ \sum im_S(i) = s + q}} \prod\limits \binom{m_H(i)}{m_S(i)} = |\chi(X, \Omega^q_{X/K})|.\]
\par Furthermore, if the $d_i$ are all equal, there is no solution for $n \geq 10r^2 + 1000.$
\end{thm}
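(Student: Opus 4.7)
The plan is to compute $|\chi(X,\Omega^q)|$ explicitly via Hirzebruch--Riemann--Roch, rewrite the system as the assertion that an explicit polynomial equals the $k$-th elementary symmetric polynomial in a multiset of monomials, and exploit the tension between the peaked shape of the Hodge side and the rigid structure of the elementary symmetric polynomials to rule out solutions once $n$ is large.

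For Step 1 I would use that $T_A$ is trivial to write the Hirzebruch $\chi_y$-class of $T_X$ via the conormal sequence as $T_y(T_X) = (1+y)^n\prod_{j=1}^r (1-e^{-\ell_j})/(\ell_j(1+ye^{-\ell_j}))$ with $\ell_j = d_j h|_X$, and then push forward along $X\hookrightarrow A$ to get
\[
\chi_y(X) \;=\; n!\,\chi(H)\cdot (1+y)^n\cdot [h^n]\prod_{j=1}^r \frac{1-e^{-d_j h}}{1+ye^{-d_j h}}.
\]
Rewriting each factor as $1 - (1+y)e^{-d_j h}/(1+ye^{-d_j h})$, expanding in Stirling numbers of the second kind, and performing a sign analysis yields an explicit formula for $|\chi(X,\Omega^q)|$ as a positive integer combination of Stirling and binomial terms in $d_1,\ldots,d_r$. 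In the special case $r=1$ this collapses to $|\chi(X,\Omega^q)|=\chi(H)\,d^n A(n,q)$, the Eulerian-number normalization of \cite{ls25}.

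For Step 2 I would multiply the stated system by $y^q$ and sum from $q=0$ to $n-r$ to get the polynomial identity
\[
y^s\sum_{q=0}^{n-r} |\chi(X,\Omega^q)|\,y^q \;=\; [u^k]\prod_i (1+uy^i)^{m_H(i)},
\]
so a solution exists iff this explicit Hodge-theoretic polynomial equals the $k$-th elementary symmetric polynomial in the multiset where $y^i$ has multiplicity $m_H(i)$. Serre duality $|\chi(X,\Omega^q)| = |\chi(X,\Omega^{n-r-q})|$ forces the left side to be palindromic, and equating total masses forces $\binom{m}{k} = \sum_q |\chi(X,\Omega^q)|$, a quantity of order $n!\,(\prod_j d_j)\,\chi(H)$, so $m$ must be very large relative to $n$.

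Step 3, ruling out the identity, is where the main work lies. The Hodge-side polynomial is a convolution of shifted Eulerian-type polynomials, hence real-rooted with negative real roots and a Gaussian-peaked coefficient sequence whose extremes differ by a factor of order $n!/\sqrt{n}$. The right side, as a $k$-th elementary symmetric polynomial in a multiset of monomials, is quite rigid: substituting $u = -y^{-j}$ annihilates $\prod_i (1+uy^i)^{m_H(i)}$ whenever $m_H(j)>0$, so the $P_k(y) := [u^k]\prod_i (1+uy^i)^{m_H(i)}$ satisfy linear relations $\sum_k (-1)^k y^{-jk}P_k(y)=0$, and moreover $P_k$ factors through a symmetric-function structure that limits how peaked it can be. Following the strategy of \cite{ls25}, the main work is to make the peak/rigidity mismatch quantitative in the range $2\leq k\leq m-1$, uniformly in $r$. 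When the $d_i$ all coincide the generating function becomes an exact $r$-th power $\bigl((1-e^{-dh})/(1+ye^{-dh})\bigr)^r$, so the shape analysis reduces to an $r$-fold self-convolution of the $r=1$ case and accounts for the improved bound $n\geq 10r^2+1000$; in the general case the added flexibility in the power sums $\sum_j d_j^\ell$ costs an additional factor and accounts for the $r^4$ in the main bound.
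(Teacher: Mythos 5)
Your Steps 1--2 are, in substance, a repackaging of the paper's setup: the paper also computes the Euler characteristics explicitly (via the Koszul and conormal sequences rather than a $\chi_y$-genus push-forward, but with the same outcome, namely expressions in ordinary and generalized Eulerian numbers, e.g.\ $\chi(X,\Omega^q_{X/K})=\pm\frac{H^n}{n!}E_r(n,q)$ in the equal-multiple case), and the left-hand side of the system is indeed the coefficient extraction $[u^k]\prod_i(1+uy^i)^{m_H(i)}$. One caveat already at this stage: the system only constrains the coefficients of $y^{s+q}$ for $q=0,\dots,n-r$, so a solution is \emph{not} equivalent to the full polynomial identity $y^s\sum_q|\chi(X,\Omega^q_{X/K})|\,y^q=[u^k]\prod_i(1+uy^i)^{m_H(i)}$; a priori the elementary symmetric polynomial could carry mass outside that window, so your ``total mass'' identity $\binom{m}{k}=\sum_q|\chi(X,\Omega^q_{X/K})|$ and the palindromy constraint do not follow from the hypotheses until one controls the support. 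The paper handles exactly this point through the extremal configurations $m_0,m_{\max}$ and the cutoffs $w,w'$ (Lemmas 5.2--5.3, following Lawrence--Sawin), which are also what later give the crucial bounds $k\leq n-r$ and $m_H(w\pm i)\leq n-r$.

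The decisive problem is Step 3: it is a restatement of the goal, not an argument. ``Peakedness of the Hodge side versus rigidity of elementary symmetric polynomials'' is the correct heuristic, but you supply no mechanism that turns it into a contradiction, no indication of where the thresholds $10r^4+1000$ and $10r^2+1000$ would come from, and no treatment of the delicate regimes (small $k$, $m_H$ concentrated near one value, the borderline case $m_0(w)=1$, the two cases $w=w'$ and $w\neq w'$). The paper's proof consists precisely of this missing work: it first shows $m_0(w)\geq 2$ and $m_H(w)-m_0(w)\geq 2$, then in each of the two cases compares individual terms of the $q=1,2,3,5$ equations (divided by the $q=0$ equation) against the ratios $|\chi(X,\Omega^q_{X/K})/\chi(X,\mathcal{O}_X)|$, and closes with explicit appendix inequalities such as $\frac{1}{48}\bigl(\chi(X,\Omega^1_{X/K})/\chi(X,\mathcal{O}_X)\bigr)^2>|\chi(X,\Omega^2_{X/K})/\chi(X,\mathcal{O}_X)|$, all proved from the asymptotics $E_r(n,k)\approx\binom{k+r-1}{r-1}(k+r)^n$. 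Also, your explanation of the $r^2$ versus $r^4$ dichotomy (``flexibility in the power sums $\sum_j d_j^\ell$'') does not reflect the actual source of the weaker bound: in the unequal-$d_i$ case the lower bounds for the relevant Euler characteristic ratios lose a factor of roughly $(n/r)^r$ (from replacing $1/e(i)$ by $r/n$ in the multinomial estimates), and absorbing that loss is what forces $n\gtrsim r^4$. As it stands, the proposal would need essentially all of Section 5 and the appendix supplied before it becomes a proof.
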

\par We use part of the strategy of \cite[Appendix B]{ls25} to show that, for sufficiently large $n,$ no solutions can exist. The main idea is as follows: assuming a solution for $m_H$ existed, one can impose a restriction on $m_H$ using the fact that there are only equations for $q = 0$ to $n-r.$ This also allows one to describe the indexing functions $m_S$ corresponding to small values of $q.$ From there, after dividing all the equations by the $q = 0$ equation, one can then interpret the equations as being single-variable polynomials in a suitable expression, where the $q$th equation is degree $d_q \leq q.$ Thus, the existence of a solution to the above system of equations implies that for each $q,$ $\left|\frac{\chi(X, \Omega^q_{X/K})}{\chi(X, \mathcal{O}_X)}\right|$ is either roughly as large as $\left|\frac{\chi(X, \Omega^1_{X/K})}{\chi(X, \mathcal{O}_X)}\right|^q,$ or the sum of secondary terms.
\par At the same time, Lawrence and Sawin interpret their Euler characteristics $\chi(X, \Omega^q_{X/K})$ in terms of the Eulerian numbers, which count the number of permutations with given numbers of descents. This combinatorial interpretation allows them to obtain asymptotics for $\frac{\chi(X, \Omega^q_{X/K})}{\chi(X, \mathcal{O}_X)}$ as $n$ grows large. Combining these two asymptotics reveals the contradiction, as $\frac{\chi(X, \Omega^q_{X/K})}{\chi(X, \mathcal{O}_X)}$ grows faster than the secondary terms but slower than $\left|\frac{\chi(X, \Omega^1_{X/K})}{\chi(X, \mathcal{O}_X)}\right|^q.$
\par To carry out this strategy in our setting, we first compute the constants on the right-hand side to obtain the analogous asymptotics for $\chi(X, \Omega^q_{X/K}).$ We achieve this by computing the more general Euler characteristics $\chi(X, \Omega^q_{X/K}(a_1H_1 + \cdots + a_mH_m)),$ where $X$ the complete intersection of any ample hypersurfaces $H_1, H_2, \ldots, H_m,$ and $a_1, a_2, \ldots, a_m \in \mathbb{Z}.$ These Euler characteristics are computed inductively via the Koszul exact sequence arising from the complete intersection hypothesis and the conormal exact sequence. We then express these in terms of the Eulerian numbers and a generalization of them that we introduce in this paper. This combinatorial interpretation provides the needed asymptotics for $\chi(X, \Omega^q_{X/K}),$ which we use to carry out the strategy for all $r$ at once in the more restricted setting of the above theorem. 
\par Since the big Tannaka group result is known for complete intersections of hypersurfaces when $n < 2r$ by \cite{jklm23}, if one could replace the condition on $n$ with $n \geq 2r,$ one would rule out one the main cases needed to prove Theorem \ref{thm: big monodromy main theorem} for all complete intersections representing multiples of the same ample Neron-Severi class, which is enough to prove the theorem for $n > 2r.$ We remark, however, that the inequalities $n \geq 10r^4 + 1000$ and $n \geq 10r^2 + 1000,$ which arise from making ``sufficiently large $n$" explicit, are not sharp. In particular, by a more careful application of the above argument, one could improve the constants or modestly improve the exponent. However, reaching $n \geq 2r$ would likely require some new ideas.
\par Our paper is organized as follows. In Section \ref{section: eulerian}, we introduce a generalization of the Eulerian numbers, which appear in the formulas for the Euler characteristics we compute in Section \ref{section: cohomology computation}. In particular, for our smooth complete intersection $X$ in an abelian variety $A$ of dimension $n,$ we are interested in computing $\chi(X, \Omega^p_X)$ for $0 \leq p \leq \dim X.$ We additionally show that these Euler characteristics satisfy a certain mild inequality which, by \cite{km24}, is enough to prove the Shafarevich conjecture for complete intersections where $\dim X < \dim A/2.$
\par The following section, Section \ref{section: reduce to combinatorics}, goes through the reduction of our problem to the equations alluded to above, using \cite[Theorem D]{km24} to prove Theorem \ref{thm: main theorem}, before reducing Theorem \ref{thm: big monodromy main theorem} to the combinatorial statement. Finally, in Section \ref{section: big monodromy asymptotics}, we prove the combinatorial statement with the asymptotic argument, relying on various inequalities involving generalized Eulerian numbers that we relegate to the appendix. The constraints relating $n$ and $r$ appearing in Theorem \ref{thm: main theorem} emerge here, as the thrust of the argument relies on having $n$ sufficiently large compared to $r.$ 
\par Before proceeding with the argument, we introduce some notation that will be useful for us. First, we will often want to consider collections of functions $\epsilon: \{1, 2, \ldots, r\} \rightarrow \{0, 1, \ldots, k\}.$ We will call the collection of such functions $F(r, k),$ and let $|\epsilon|$ be the sum of $\epsilon(i)$ over all $i.$
\par Furthermore, it will be useful to us to consider a slightly larger family of subvarieties than just complete intersections. We are interested in varieties that are ``like" complete intersections in the following sense.
\begin{defn}
Given two subvarieties $X, Y \subset A,$ we say that $X$ is \textit{numerically like $Y$} if $\dim X = \dim Y,$ and $\chi(X, \Omega^i_X) = \chi(Y, \Omega^i_Y)$ for $0 \leq i \leq \dim X.$
\end{defn}
For the rest of the paper, fix a number field $K,$ an abelian variety $A$ defined over $K$ with dimension $n,$ and a set of places $S$ which include all archimedean places and places where $A$ has bad reduction.
\subsection*{Acknowledgements}
The author would like to thank Mark Kisin for introducing him to the circle of ideas around this paper, in particular pointing him to \cite{lv18} and \cite{km24}, as well as for enlightening conversations around the broader set of ideas in this area and for feedback on drafts of this paper.
\par In addition, the author used OpenAI's o4-mini model to aid the revision of the introduction of this paper.
\section{Eulerian Numbers and Generalizations}\label{section: eulerian}
In this section, we will review some facts about Eulerian numbers, before describing a generalization which will be useful later on in the paper. For more information about the Eulerian numbers, see \cite[\S 1.3 - 1.5]{eulerian}, which we will draw from for the exposition that we give here.
\subsection{A Review of Eulerian Numbers}
\par We start with the Eulerian numbers, recalling their definition and collecting some useful properties.
\begin{defn}
The Eulerian number $E(n, k),$ for integers $k \geq 0$ and $n \geq 1,$ is equal to the number of permutations $w$ of the set $\{1, 2, \ldots, n\}$ such that there are $k$ descents. That is, there are $k$ elements $i \in \{1, 2, \ldots, n\}$ for which $w(i) > w(i+1).$ 
\end{defn}
\par We first observe the basic property that, if $n \geq 1,$ $\sum\limits_{k=0}^{n-1} E(n, k) = n!;$ every permutation of $n$ elements has between $0$ and $n$ descents. In addition, we have the recurrence relation (see \cite[Theorem 1.3]{eulerian}), given by $E(n, k) = (n-k)E(n-1, k-1) + (k+1)E(n-1, k).$
\par We can also write an explicit formula for the Eulerian numbers as an alternating summation of binomial coefficients.
\begin{prop}[Corollary 1.3 from \cite{eulerian}]\label{prop: alternating summation formula for Eulerian numbers}
We have that for $n \geq 1$ that \[E(n, k) = \sum\limits_{i=0}^k (-1)^i (k + 1 - i)^n \binom{n+1}{i}.\]
\end{prop}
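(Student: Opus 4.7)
The plan is to prove this by induction on $n$, using the recurrence $E(n,k) = (n-k)E(n-1,k-1) + (k+1)E(n-1,k)$ recalled just above the statement. The base case $n=1$ is immediate: only $k=0$ is meaningful, and both sides equal $1$.

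For the inductive step, I would substitute the inductive formulas for $E(n-1,k-1)$ and $E(n-1,k)$ into the recurrence and then reindex the first sum by $i \mapsto i-1$, so that both sums run over $0 \leq i \leq k$ (with the convention $\binom{n}{-1} = 0$ handling the boundary term at $i=0$). After combining, the right-hand side of the recurrence becomes
\[
\sum_{i=0}^{k}(-1)^i(k+1-i)^{n-1}\Bigl[(k+1)\binom{n}{i} - (n-k)\binom{n}{i-1}\Bigr].
\]
The remaining task is to verify the binomial identity
\[
(k+1)\binom{n}{i} - (n-k)\binom{n}{i-1} = (k+1-i)\binom{n+1}{i},
\]
which reduces, after writing both sides over the common denominator $i!(n+1-i)!$, to the elementary calculation $(k+1)(n-i+1) - (n-k)i = (n+1)(k+1-i)$. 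Once this identity is in hand, the extra factor of $(k+1-i)$ upgrades $(k+1-i)^{n-1}$ to $(k+1-i)^n$, producing exactly the claimed closed form for $E(n,k)$.

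The main obstacle is purely bookkeeping: ensuring that the boundary term at $i=0$ is handled correctly in both sums and that the reindexing is compatible with the summation range on the right. If the recurrence manipulation proved unwieldy, I would fall back on a more combinatorial approach by first establishing Worpitzky's identity $m^n = \sum_{k} E(n,k)\binom{m+k}{n}$ (via counting functions $[n] \to [m]$ sorted according to an associated permutation) and then extracting $E(n,k)$ by finite-difference inversion of the polynomial basis $\{\binom{m+k}{n}\}$. However, the inductive route above is the shortest and most self-contained.
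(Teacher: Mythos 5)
The paper proves nothing here: Proposition \ref{prop: alternating summation formula for Eulerian numbers} is simply cited as Corollary~1.3 of \cite{eulerian}, so there is no argument to compare yours against. Your inductive proof is correct and complete. The base case $n=1$ checks out for all $k$ (both sides vanish for $k\geq 1$, since $\binom{2}{i}=0$ for $i\geq 3$ leaves a telescoping $(k+1)-2k+(k-1)=0$). In the inductive step, the reindex $i\mapsto i-1$ together with the convention $\binom{n}{-1}=0$ is handled correctly, and the binomial identity is verified by the clean computation $(k+1)(n+1-i)-(n-k)i=(n+1)(k+1-i)$, which indeed upgrades $(k+1-i)^{n-1}$ to $(k+1-i)^n$ and yields the claimed formula. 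This is the standard proof; the Worpitzky-plus-inversion alternative you mention would also work but is longer. No gaps.
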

\begin{rmk}
Later, we will encounter situations where we want to write $E(n, k)$ for values of $n, k$ outside of those that we had initially defined. For us, we will say that $E(n, k)$ is zero whenever $n \leq 0,$ or when $k$ doesn't lie between $0$ to $n-1.$ This will allow us to ensure consistency with the algebraic formulas we will handle in the next section.
\end{rmk}
Finally, we note that the sequence $E(n, k),$ for $k = 0, 1, 2, \ldots, n-1,$ is log-concave. This follows from \cite[Exercises 4.6, 4.8]{eulerian}.
\subsection{Generalizing the Eulerian Numbers}\label{subsection: generalized eulerian}
We now introduce a generalization the Eulerian numbers and discuss the properties we will need. These will appear naturally in the context of the Euler characteristics $\chi(X, \Omega^i_{X/K})$ for a complete intersection of hypersurfaces $X.$ 
\begin{defn}
The \textit{$r$th order Eulerian number} for $k \geq 0$ and $n \geq 1$ is given by the number of permutations $w$ of the set $\{1, 2, \ldots, n\},$ along with a subset $S \subset \{1, 2, \ldots, n-1\}$ of size $r-1,$ such that there are $k$ descents $i$ that aren't in $S.$
\end{defn}
In other words, we are counting the number of collections of ``ignored consecutive indices" and permutations with a given number of descents that aren't among the beginnings of these ignored consecutive indices.
\begin{exmpl}\label{exmpl: example of generalized Eulerian}
Consider $n = 4.$ We have that $E_2(4, 0) = 14, E_2(4, 1) = 20, E_2(4, 2) = 14:$ note that we have $3$ choices of this subset, being either $\{1\}, \{2\}, \{3\}.$ For instance, to determine $E_2(4, 0),$ we can start with $S = \{1\}.$ Then, the permutations $\sigma$ with no descents outside of $S$ are those with $\sigma(2) < \sigma(3) < \sigma(4),$ so the permutation is uniquely determined by $\sigma(1)$ (as then we arrange the other $3$ elements in order). This contributes $4;$ similarly with $S = \{2\}$ we choose two elements to be the first two elements given in the permutation (which we put in increasing order), yielding $6$ such permutations, and finally $S = \{3\}$ has $4$ such permutations.   
\end{exmpl}
From the logic of the example above, we can check that \[E_r(n, k) = \sum\limits_{\substack{\epsilon \in F(r, n - r) \\ |\epsilon| = n - r}}\sum\limits_{\substack{t \in F(r, k) \\ |t| = k}} \binom{n}{\epsilon(1) + 1, \epsilon(2) + 1, \ldots, \epsilon(r) + 1}\prod\limits_{i=1}^r E(\epsilon(i) + 1, t(i)).\] Indeed, we first choose our subset $S \in \{1, 2, \ldots, n-1\};$ suppose this is given by $x_1 < x_2 < \cdots < x_{r-1}.$ Then, we consider the function sending $\epsilon(i)$ to $x_i - x_{i-1} - 1$ (saying that $x_0 = 0$ and $x_r = n$). For each subset, we have $\binom{n}{\epsilon(1), \epsilon(2), \ldots, \epsilon(r)}$ ways to assign numbers to the buckets $\{1, \ldots, x_1
\}, \{x_1 + 1, x_1 + 2, \ldots, x_2\}, \ldots, \{x_{r-1} + 1, \ldots, n\},$ and then the product of these Eulerian numbers to figure out how to permute the numbers within each bucket. Conversely, to each such function, we can associate the subset $S$ given by $\{\epsilon(1) + 1, \epsilon(1) + \epsilon(2) + 2, \ldots, \sum\limits_{i=1}^{r-1} \epsilon(i) + r - 1\}.$
\par Since we know that $E(0, k) = 0$ for all integers $k,$ we thus obtain the following.
\begin{lemma}\label{lem: generalized Eulerian in terms of Eulerian}
For any nonnegative integers $n, k,$ we have that \[E_r(n, k) = \sum\limits_{\substack{\epsilon \in F(r, n)\\ |\epsilon| = n}} \sum\limits_{\substack{t \in F(r, k)\\ |t| = k}} \binom{n}{\epsilon(1), \epsilon(2), \ldots, \epsilon(r)} \prod\limits_{i=1}^r E(\epsilon(i), t(i)).\]
\end{lemma}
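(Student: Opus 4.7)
The plan is almost entirely a reindexing exercise, as the combinatorial heart of the identity has already been laid out in the discussion immediately preceding the lemma. First I would formally record that derivation. Given a subset $S = \{x_1 < x_2 < \cdots < x_{r-1}\} \subset \{1, \ldots, n-1\}$ (with the conventions $x_0 = 0$ and $x_r = n$), setting $\epsilon(i) = x_i - x_{i-1} - 1$ defines a bijection onto the set of $\epsilon \in F(r, n-r)$ with $|\epsilon| = n - r$. The descents at positions in $S$ are precisely the inter-bucket descents, so the non-$S$ descents split as within-bucket descents $t(i) \geq 0$ satisfying $\sum_i t(i) = k$. Counting the permutations bucket-by-bucket, using the multinomial coefficient to distribute labels and the Eulerian number $E(\epsilon(i)+1, t(i))$ to count arrangements within bucket $i$, yields
\[E_r(n, k) = \sum\limits_{\substack{\epsilon \in F(r, n-r) \\ |\epsilon| = n-r}} \sum\limits_{\substack{t \in F(r, k) \\ |t| = k}} \binom{n}{\epsilon(1)+1, \ldots, \epsilon(r)+1} \prod\limits_{i=1}^r E(\epsilon(i)+1, t(i)).\]

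To reach the form in the lemma, I would substitute $\epsilon'(i) = \epsilon(i) + 1$, so that the index set becomes $\{\epsilon' \in F(r, n) : |\epsilon'| = n,\ \epsilon'(i) \geq 1 \text{ for all } i\}$, and then drop the positivity restriction. Dropping it is free of charge, thanks to the convention $E(0, k) = 0$ recorded in the previous subsection: any additional term in which some $\epsilon'(i) = 0$ contains the vanishing factor $E(0, t(i))$ in its product and therefore contributes nothing. This yields exactly the stated identity. There is no real obstacle in the argument; the point of the lemma is simply to record the reformulation over all compositions into $r$ \emph{nonnegative} parts, which is the more symmetric shape that will interact cleanly with the Koszul and conormal computations carried out in Section \ref{section: cohomology computation}.
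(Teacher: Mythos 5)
Your proposal matches the paper's argument exactly: it records the bucket decomposition yielding the formula indexed over $\epsilon \in F(r, n-r)$ with entries $\epsilon(i)+1$, then shifts indices and uses the convention $E(0, k) = 0$ to drop the positivity constraint. This is precisely how the paper derives the lemma from the preceding discussion.
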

\par From our definition of generalized Eulerian numbers, we can construct the following recurrence relation relating the larger order Eulerian numbers to the smaller ones (for $r \geq 2$).
\[(r-1)E_r(n, j) = (j+1)E_{r-1}(n, j+1) + (n+1 -j - r)E_{r-1}(n, j).\]
\par Indeed, both of these count the number tuples of a permutation, a choice of $(r-1)$ ignored consecutive pairs, and a specific choice of one of these consecutive pairs. On the right-hand side, either the consecutive pair we chose had a descent (which yields the first term), or it doesn't (yielding the second term). 
\par We can use this modified recurrence relation to prove log concavity of $E_r(n, k),$ for $k = 0, 1, 2, \ldots, n - r,$ by induction on $r.$ The case for $r = 1$ follows by \cite[Problems 4.6, 4.8]{eulerian}, as we mentioned above. Meanwhile, given log concavity for $r-1,$ we want to show that $E_r(n, q)^2 - E_r(n, q-1)E_r(n, q+1) \geq 0$ for all $n, q.$ Substituting in the recurrence formula above (and multiplying by $(r-1)^2$), we obtain \begin{align*}& (j+1)^2E_{r-1}(n, j+1)^2 + 2(j+1)(n+1-j-r)E_{r-1}(n, j+1)E_{r-1}(n, j) + (n+1 -j - r)^2E_{r-1}(n, j)^2 \\ & - (jE_{r-1}(n, j) + (n+2 -j - r)E_{r-1}(n, j - 1))((j+2)E_{r-1}(n, j+2) + (n -j - r)E_{r-1}(n, j+1))  \\ &= E_{r-1}(n, j+1)^2 + j(j+2)\left(E_r(n, j + 1)^2 - E_r(n, j)E_r(n, j+2)\right) - 2E_{r-1}(n, j)E_{r-1}(n, j+1) \\ & + (j+2)(n + 2 - j - r) \left(E_{r-1}(n, j)E_{r-1}(n, j+1) - E_{r-1}(n, j-1)E_{r-1}(n, j + 2)\right) \\ & + E_{r-1}(n, j)^2 + (n + 2 - j - r)(n - j- r)\left(E_{r-1}(n, j)^2 - E_{r-1}(n, j-1)E_{r-1}(n, j+1) \right).\end{align*} But this is at least $0,$ using the log concavity for $E_{r-1}(n, \bullet)$ from our inductive hypothesis (with the remaining terms being $(E_{r-1}(n, j) - E_{r-1}(n, j-1))^2 \geq 0$).
\par Note that $E_r(n, j)$ is also symmetric around $\frac{n-r}{2}$ by definition, so in particular $E_r(n, j)$ is increasing between $j = 0$ and $j = \lfloor \frac{n-r}{2} \rfloor.$
\par Using the properties we've collected above, we now generalize \cite[Lemma C.1]{ls25}, which we will heavily utilize later in the paper.
\begin{lemma}\label{lem: Eulerian asymptotic}
For all $n \geq 1, r \geq 1,$ and $k \geq 0,$ we have that $$\binom{k + r - 1}{r-1} \left((k + r)^n - (n+1)(k + r - 1)^n\right) \leq E_r(n, k) \leq \binom{k+r-1}{r-1} (k+r)^n.$$
\end{lemma}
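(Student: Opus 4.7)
The plan is to derive both inequalities from the convolution identity of Lemma~\ref{lem: generalized Eulerian in terms of Eulerian} together with the $r=1$ bounds $(k+1)^a - (a+1)k^a \leq E(a,k) \leq (k+1)^a$ of \cite[Lemma C.1]{ls25}. For the upper bound I will substitute $E(\epsilon(i),t(i)) \leq (t(i)+1)^{\epsilon(i)}$ factorwise; the inner sum over $\epsilon$ then collapses by the multinomial theorem to $(\sum_i(t(i)+1))^n = (k+r)^n$, and summing over the $\binom{k+r-1}{r-1}$ compositions $t$ of $k$ gives the upper bound immediately.

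For the lower bound, I will write each factor as $E(\epsilon(i),t(i)) = f_i - C_i$ with $f_i := (t(i)+1)^{\epsilon(i)}$ and $0 \leq C_i \leq \delta_i := (\epsilon(i)+1)t(i)^{\epsilon(i)}$, noting that $C_i \leq f_i$ is automatic from $E \geq 0$. A one-line induction on $r$ yields the Weierstrass-type inequality $\prod_i(f_i - C_i) \geq \prod_i f_i - \sum_i C_i \prod_{j \neq i} f_j$ whenever $0 \leq C_i \leq f_i$. Substituting this into the convolution formula and using $C_i \leq \delta_i$ gives $E_r(n,k) \geq L_1 - L_2$, where $L_1 = \binom{k+r-1}{r-1}(k+r)^n$ by the same multinomial identity as in the upper bound, and
\[
L_2 = \sum_{\epsilon, t, i} \binom{n}{\epsilon(1),\ldots,\epsilon(r)}(\epsilon(i)+1)t(i)^{\epsilon(i)}\prod_{j \neq i}(t(j)+1)^{\epsilon(j)}.
\]

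The key computational step is evaluating $L_2$ in closed form. For fixed $t$ and $i$, I will recognize the inner $\epsilon$-sum as $(1 + x_i\partial_{x_i})(\sum_j x_j)^n$ evaluated at $x_i = t(i)$ and $x_j = t(j)+1$ for $j \neq i$; since $\sum_j x_j = k+r-1$ in that specialization, this equals $(k+r-1)^{n-1}(k+r-1 + nt(i))$. Summing over $i$ and over $t$ with $|t|=k$, using $\sum_{t,i}t(i) = k\binom{k+r-1}{r-1}$ by symmetry, yields $L_2 = \binom{k+r-1}{r-1}(k+r-1)^{n-1}(r(k+r-1) + nk)$. Comparing $L_1 - L_2$ with the target $\binom{k+r-1}{r-1}((k+r)^n - (n+1)(k+r-1)^n)$, after cancelling common factors the problem reduces to the elementary inequality $(r-1)(k+r-1-n) \leq 0$, which is automatic for $r=1$ and holds for $r \geq 2$ whenever $k \leq n-r+1$. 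In the residual range $k \geq n-r+1$ one has $E_r(n,k) = 0$ by a block-decomposition argument, while $(1 + \tfrac{1}{k+r-1})^n \leq (1+\tfrac{1}{n})^n < n+1$ (using $(1+1/n)^n < e$ for $n \geq 2$, and direct verification for $n = 1$) shows the claimed lower bound is non-positive, so the inequality is trivial.

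The main obstacle I anticipate is the exact evaluation of $L_2$: looser estimates that do not exploit the differentiation trick produce a correction whose coefficient of $(k+r-1)^{n-1}$ exceeds the $(n+1)(k+r-1)$ envelope, and would not yield the stated bound. Keeping this closed form clean is what makes the comparison collapse to the one-line inequality $(r-1)(k+r-1-n) \leq 0$.
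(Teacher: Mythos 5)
Your proof is correct, but it takes a genuinely different route from the paper. The paper generalizes the bijective labelling argument of \cite[Lemma C.1]{ls25} directly: the upper bound comes from encoding each pair (subset $S$, permutation with $k$ descents outside $S$) by a labelling of $\{1,\ldots,n\}$ with labels in $\{1,\ldots,k+r\}$ plus a choice of $r-1$ labels, and the lower bound from an explicit overcount of the degenerate labellings, giving the $(n+1)(k+r-1)^n$ error term combinatorially and uniformly in $k$. You instead treat the $r=1$ bounds of \cite[Lemma C.1]{ls25} as a black box and push them through the convolution identity of Lemma~\ref{lem: generalized Eulerian in terms of Eulerian}: the multinomial theorem collapses the upper bound, and for the lower bound the Weierstrass-type inequality $\prod_i(f_i-C_i)\geq \prod_i f_i-\sum_i C_i\prod_{j\neq i}f_j$ together with your exact evaluation of $L_2$ via $(1+x_i\partial_{x_i})(\sum_j x_j)^n$ reduces everything to $(r-1)(k+r-1-n)\leq 0$; the complementary range $k>n-r$ is handled by $E_r(n,k)=0$ and $(1+\tfrac1{k+r-1})^n\leq n+1$. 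I checked the closed form of $L_2$ and the final comparison, and both are right (note the comparison at $n=1$, $k+r-1=n$ is an equality, which is fine since the bound is non-strict). Two minor points you should make explicit: factors with $\epsilon(i)=0$ are degenerate, and the bound $C_i\leq(\epsilon(i)+1)t(i)^{\epsilon(i)}$ in the case $\epsilon(i)=t(i)=0$ needs the convention $0^0=1$ (which is also what your generating-function evaluation of $L_2$ implicitly uses), and the cited $r=1$ bounds must be checked to hold with this convention at $a=0$; both are trivial verifications, not gaps. What each approach buys: the paper's counting argument is self-contained and needs no case split, while yours is more mechanical, reuses the already-proved convolution identity and the $r=1$ result wholesale, and has the nice feature that the correction term comes out in closed form and matches the stated envelope exactly in the nontrivial range.
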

\begin{proof}
The argument of \cite[Lemma C.1]{ls25} can be easily generalized here. Consider the following construction: assign each of the numbers $1, 2, \ldots, n$ a label between $1$ and $k + r,$ and pick $r-1$ of the labels between $1$ and $k + r-1.$ Then, we consider the permutation where we arrange all the numbers with the label $1$ first in increasing order, then all the numbers with label $2,$ and so forth.
\par Note that each pair of a subset $S$ of $\{1, 2, \ldots, n-1\}$ of size $r-1$ and a permutation with $k$ descents outside of $S$ can be uniquely constructed in this way. To do this, label all the numbers at or before the first descent or element of $S$ with $1,$ then label all the numbers before the next descent or element of $S$ with a $2,$ and so forth. Our choice of a subset of $r-1$ of these labels $1, 2, \ldots, k+r-1$ are those labels $i$ such that the last element with label $i$ is an element of $S.$ This establishes the upper bound.
\par For the lower bound, suppose the data of labels of numbers $1, 2, \ldots, n,$ along with the choices of labels, doesn't correspond to a pair of a subset of $r$ elements and a permutation with $k$ descents outside of these elements. We want to rule out these cases. Then, either there are less than $k + r$ labels, or there is some label $i$ for which the largest element with label $i$ is smaller than the smallest element with label $i + 1.$ Otherwise, we have fewer than $k + r - 1$ numbers in $\{1, 2, \ldots, n-1\}$ which are either descents or elements in $S$ by our above construction. 
\par We can then construct such data as follows. Label the numbers $\{1, 2, \ldots, n\}$ with labels between $1$ and $k + r - 1,$ then pick $r-1$ of these labels. We then pick some number $j$ between $0$ and $n,$ and then increase the labels of all the numbers that either have larger labels, or the same label but are larger numbers. We check that this produces all of the data in the cases we wanted to rule out. 
\par For the first, if label $i$ is skipped, and number $j$ has label $a_j,$ in our new construction we assign labels where $j$ has label $a_j$ if $a_j < i$ and $a_j - 1$ if $a_j > i.$ For the second, if the largest element with label $i,$ which we denote $k,$ is smaller than the smallest element with label $i + 1,$ then in our new construction we assign label $a_j$ to $j$ if $a_j < i$ or $a_j = i$ and $j \leq k,$ or $a_j - 1$ otherwise. Note furthermore that these constructions are distinct.
\par This then gives us $\binom{k+r-1}{r-1} (n+1)(k+r-1)^n$ ways to build this new data. This is then an upper bound for the number of cases we want to rule out and thus a lower bound for $E_r(n, k).$
\end{proof}
We will want an analogous bound to \cite[Lemma C.3]{ls25}, to express both sides in terms of $(k+r)^n.$ We achieve this by using the above lemma.
\begin{lemma}\label{lem: more useable asymptotic}
Suppose that we are in either one of these situations:
\begin{enumerate}
    \item $n \geq 3r^2 + 100,$ $r \geq 2,$ and $k \leq 4r.$
    \item $k \leq \frac{n}{\ln(n+1) + 1} - r.$
\end{enumerate} Then, $$E_r(n, k) \geq 0.6 \binom{k+r-1}{r-1}(k+r)^n.$$ 
\end{lemma}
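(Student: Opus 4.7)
The plan is to apply the previous lemma to reduce to a cleaner elementary inequality and then verify it in both cases. By Lemma \ref{lem: Eulerian asymptotic}, the desired bound $E_r(n,k) \geq 0.6 \binom{k+r-1}{r-1}(k+r)^n$ will follow as soon as
\[(k+r)^n - (n+1)(k+r-1)^n \geq 0.6\,(k+r)^n,\]
which (after handling the trivial case $k+r-1 = 0$, i.e.\ $r=1, k=0$, where $E_1(n,0) = 1$ and the claim is immediate) is equivalent to
\[\left(1 + \frac{1}{k+r-1}\right)^n \geq 2.5\,(n+1).\]
So the whole problem reduces to establishing this single exponential inequality under each of the two hypotheses.

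The next step is a uniform lower bound on the left-hand side. Using the elementary inequality $\ln(1+x) \geq x/(1+x)$ for $x \geq 0$ with $x = 1/(k+r-1)$, one obtains
\[\left(1 + \frac{1}{k+r-1}\right)^n \geq \exp\!\left(\frac{n}{k+r}\right).\]
In case (2), the hypothesis $k + r \leq n/(\ln(n+1)+1)$ gives $n/(k+r) \geq \ln(n+1) + 1$, so the right-hand side is at least $e(n+1) > 2.5(n+1)$. This disposes of case (2) cleanly.

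For case (1), the bound $k \leq 4r$ yields $k+r \leq 5r$, so it suffices to show
\[\exp\!\left(\frac{n}{5r}\right) \geq 2.5\,(n+1), \qquad \text{equivalently} \qquad \frac{n}{5r} \geq \ln(2.5(n+1)).\]
Since the left-hand side is increasing in $n$ while the right-hand side is only logarithmic, it suffices to check the inequality at $n = 3r^2 + 100$, where $n/(5r) = \tfrac{3r}{5} + \tfrac{20}{r}$. The function $r \mapsto \tfrac{3r}{5} + \tfrac{20}{r}$ is minimized on $r \geq 2$ near $r = \sqrt{100/3}$, giving a floor roughly $2\sqrt{12} \approx 6.93$, and it grows linearly in $r$, whereas $\ln(2.5(3r^2 + 101))$ grows only like $2\ln r$ plus a bounded constant.

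The main (and only) obstacle is the explicit verification of this last elementary inequality across all $r \geq 2$. The cleanest route is to split at some moderate threshold $r_0$ (say $r_0 = 10$): for $r \geq r_0$ one bounds $\ln(2.5(3r^2+101)) \leq 2\ln r + C$ with an explicit $C$ and checks $3r/5 \geq 2\ln r + C$ by monotonicity, while for $2 \leq r \leq r_0$ one simply tabulates both sides. The slack is comfortable in every case (e.g.\ at the tightest point $r = 6$ one gets roughly $6.93$ versus $\ln(522.5) \approx 6.26$), so no further refinement is needed; this is why the paper states the condition as $n \geq 3r^2 + 100$ rather than attempting a sharper constant.
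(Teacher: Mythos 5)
Your proposal is correct and follows essentially the same route as the paper: both apply Lemma \ref{lem: Eulerian asymptotic} and reduce to showing the error factor $(n+1)\bigl(1-\tfrac{1}{k+r}\bigr)^n \leq 0.4$ via the bound $e^{-n/(k+r)}$, with case (2) immediate from the hypothesis and case (1) checked at $n=3r^2+100$ by a derivative comparison and an elementary verification over $r\geq 2$ (the paper likewise tabulates small $r$ and uses monotonicity for large $r$). One tiny caution: in your suggested finishing step you should keep the $20/r$ term (or raise the threshold $r_0$ to about $13$), since $3r/5 \geq 2\ln r + C$ alone fails for $r$ near $10$, though with $20/r$ retained the check at $r_0=10$ and all tabulated values go through comfortably.
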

\begin{proof}
We look at the error term for the lower bound in Lemma \ref{lem: Eulerian asymptotic}; we will show that \[(n+1) \left(1 - \frac{1}{k+r}\right)^n \leq \frac{1}{e} < 0.4.\] We observe that $(1 - \frac{1}{x})^x$ increases to $1/e$ as $x$ goes to positive infinity. Now, for the second case, we find that \[(n+1) \left(1 - \frac{1}{k+r}\right)^n = (n+1)\left(\left(1 - \frac{1}{k+r}\right)^{(k+r)}\right)^{n/(k+r)} \leq (n+1) e^{-n/(k+r)}.\]
\par For this to be less than $1/e,$ we want $\ln(n+1) - \frac{n}{(k+r)} \leq -1.$ However, observe that this holds if and only if $\ln (n+1) + 1 \leq \frac{n}{(k+r)},$ or that $k \leq \frac{n}{\ln(n+1) + 1} - r,$ which is what we're assuming in the second case.
\par In the first case, we can bound the left-hand side by \[(n+1) \left(1 - \frac{1}{5r}\right)^n \leq (n+1) e^{-n/5r} = e^{\ln(n+1) - n/5r}.\]
\par Now, observe that the exponent is decreasing, since the derivative is $\frac{1}{n+1} - \frac{1}{5r} < 0$ for $n \geq 5r - 1$ and $r \geq 1.$ First, we prove that $\ln (3r^2 + 21) - \frac{3r^2 + 20}{5r} < -1$ for large enough $r.$ Again, we take the derivative, where we find that this is $\frac{6r}{3r^2 + 21} - \frac{3}{5} + \frac{4}{r^2},$ which is negative when $r \geq 4.$ But we can see that the inequality above is then satisfied for $r \geq 11,$ plugging in $r = 11.$ Meanwhile, we can manually check this for $2 \leq r \leq 10$ that the above inequality holds for when $3r^2 + 20$ is replaced with $3r^2 + 100,$ which is what we wanted to show.
\end{proof}
\section{Cohomology of Complete Intersections}\label{section: cohomology computation}
We now begin computing the Euler characteristics $\chi(X, \Omega^q_X)$ for $X$ a complete intersection of $r$ hypersurfaces, which we will need in order to prove our desired finiteness statement. In addition, we will verify that these Euler characteristics satisfy the numerical condition in \cite[Theorem D]{km24}, which is one of the conditions we need in order to apply the output of the Lawrence-Venkatesh method. Already this computation will let us utilize \cite[Theorem D]{km24} to prove that the Shafarevich conjecture holds for complete intersections of dimension less than $n/2.$
\subsection{Preliminary Exact Sequences}
\par To begin, let $X$ be the smooth complete intersection of hypersurfaces $H_1, H_2, \ldots, H_r$ of $A$ representing ample Neron-Severi classes. We will start by collecting some exact sequences that we will utilize throughout this section.
\begin{lemma}\label{lem: global Koszul}
Suppose $X$ is the complete intersection of hypersurfaces $H_1, H_2, \dots, H_r$ in a smooth variety $A.$ Then, if $\mathcal{I}_X$ is the ideal sheaf cutting out $X,$ then we have the exact sequence \[0 \rightarrow \bigoplus\limits_{\substack{S \in \{1, 2, \ldots, r\}\\|S| = r}} \mathcal{O}_A(-\sum\limits_{i \in S} H_i) \rightarrow \bigoplus\limits_{\substack{S \in \{1, 2, \ldots, r\}\\|S| = r-1}} \mathcal{O}_A(-\sum\limits_{i \in S} H_i) \rightarrow \cdots \bigoplus\limits_{\substack{S \in \{1, 2, \ldots, r\}\\|S| = 1}} \mathcal{O}_A(-\sum\limits_{i \in S} H_i) \rightarrow \mathcal{I}_X \rightarrow 0,\] where we interpret the hypersurfaces as divisors.
\end{lemma}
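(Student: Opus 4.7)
The plan is to recognize the sequence in question as the truncation of the standard Koszul resolution associated to a regular sequence of sections of line bundles, then verify regularity of the sequence locally using smoothness of $A$.

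First I would set up the Koszul complex. Each hypersurface $H_i$ corresponds to a section $s_i \in H^0(A, \mathcal{O}_A(H_i))$, equivalently a map $\mathcal{O}_A(-H_i) \to \mathcal{O}_A$. Taking exterior powers of the map $\bigoplus_{i=1}^r \mathcal{O}_A(-H_i) \to \mathcal{O}_A$ yields the Koszul complex
\[ 0 \to \bigwedge^r \Bigl(\bigoplus_i \mathcal{O}_A(-H_i)\Bigr) \to \cdots \to \bigwedge^1 \Bigl(\bigoplus_i \mathcal{O}_A(-H_i)\Bigr) \to \mathcal{O}_A \to 0, \]
whose degree $-k$ term is canonically identified with $\bigoplus_{|S|=k} \mathcal{O}_A(-\sum_{i\in S} H_i)$, with differentials given by the usual alternating contraction against $(s_1,\dots,s_r)$. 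The image of the rightmost nontrivial map is precisely the ideal sheaf $\mathcal{I}_X$, since this ideal is by definition generated by $s_1,\dots,s_r$.

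Next I would prove exactness by working locally. On a sufficiently small affine open $U \subset A$ the line bundles $\mathcal{O}_A(H_i)$ trivialize simultaneously, so the sections $s_i$ become regular functions $f_1,\dots,f_r \in \Gamma(U,\mathcal{O}_A)$, and the complex above restricts to the standard Koszul complex on the sequence $f_1,\dots,f_r$. A standard fact in commutative algebra is that the Koszul complex on a regular sequence in a ring $R$ is a free resolution of $R/(f_1,\dots,f_r)$. Truncating the $\mathcal{O}_A \to \mathcal{O}_X \to 0$ tail then yields a resolution of $\mathcal{I}_X|_U$, matching the sequence in the lemma.

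The only real point to check is therefore that $f_1,\dots,f_r$ form a regular sequence at every point of $U$. Since $A$ is smooth, each local ring $\mathcal{O}_{A,x}$ is regular and in particular Cohen--Macaulay. The complete intersection hypothesis means $X = V(f_1,\dots,f_r)$ has codimension exactly $r$ at every point where it meets $U$, so $f_1,\dots,f_r$ is a system of parameters for the ideal they generate in each local ring along $X$; by the Cohen--Macaulay criterion they form a regular sequence there. Away from $X$, at least one $f_i$ is a unit, which makes the Koszul complex contractible trivially. This is the only subtle step in the argument; the rest is formal once the Koszul complex is written down. Since exactness is a local property, patching the local resolutions gives the global exact sequence asserted by the lemma.
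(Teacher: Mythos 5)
Your proposal is correct and follows essentially the same route as the paper: both write down the Koszul complex attached to the sections cutting out $H_1,\dots,H_r$, pass to an affine cover where the divisors become principal, invoke exactness of the Koszul complex on a regular sequence, and truncate at $\mathcal{O}_A$ to identify the image with $\mathcal{I}_X$. The only (harmless) difference is that the paper reads the regular-sequence property directly off the complete-intersection hypothesis, while you rederive it from the codimension-$r$ condition via Cohen--Macaulayness of the smooth local rings and handle the locus away from $X$ separately.
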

\begin{proof}
To prove this, we first recall that $A$ is smooth, ergo factorial, and therefore all Weil divisors are locally principal (see \cite[Lemma 12.1.7]{vakil} and \cite[0AG0]{stacks-project}, for instance). In particular, we have an affine open cover $U_i = \Spec R_i$ of $A$ such that on each open subset $U_i$ in the cover, the hypersurfaces $H_1, H_2, \ldots, H_r$ are cut out by $f_{1i}, f_{2i}, \ldots, f_{ri} \in R_i.$ Since these are complete intersections, we know that $f_{ji}$ is a nonzerodivisor in $R_i/\langle f_{1i}, f_{2i}, \ldots, f_{(j-1)i} \rangle$ for each $j = 1, 2, \ldots, r.$
\par Now, we define the morphisms in the above exact sequence as follows. Given $S \subset \{1, 2, \ldots, r\},$ and $S' \subset S$ arising from deleting the $j$th smallest element of $S,$ we consider the morphism $\mathcal{O}_A(\sum\limits_{i\in S} -H_i) \rightarrow \mathcal{O}_A(\sum\limits_{i\in S'} -H_i),$ arising from $(-1)^{j+1}$ times the inclusion. Similarly, we have the map $\mathcal{O}_A(-H_i) \rightarrow \mathcal{I}_X$ by the inclusion morphism.
\par To prove exactness of the above sequence, it suffices to check exactness on sections of the affine opens $\Spec R_i.$ But our above sequence then looks like the following sequence of $R_i$-modules:
\[0 \rightarrow \bigoplus\limits_{\substack{S \in \{1, 2, \ldots, r\}\\|S| = r}} \left\langle \prod\limits_{j \in S }f_{ji} \right\rangle \rightarrow \bigoplus\limits_{\substack{S \in \{1, 2, \ldots, r\}\\|S| = r-1}} \left\langle \prod\limits_{j \in S }f_{ji} \right\rangle  \rightarrow \cdots \bigoplus\limits_{\substack{S \in \{1, 2, \ldots, r\}\\|S| = 1}} \left\langle \prod\limits_{j \in S }f_{ji} \right\rangle \rightarrow \langle f_1, f_2, \ldots, f_r \rangle \rightarrow 0,\] with all the morphisms given by the appropriate inclusions. This sequence being exact is equivalent to this being exact:
\[0 \rightarrow \bigoplus\limits_{\substack{S \in \{1, 2, \ldots, r\}\\|S| = r}} \left\langle \prod\limits_{j \in S }f_{ji} \right\rangle \rightarrow \bigoplus\limits_{\substack{S \in \{1, 2, \ldots, r\}\\|S| = r-1}} \left\langle \prod\limits_{j \in S }f_{ji} \right\rangle  \rightarrow \cdots \bigoplus\limits_{\substack{S \in \{1, 2, \ldots, r\}\\|S| = 1}} \left\langle \prod\limits_{j \in S }f_{ji} \right\rangle \rightarrow R_i \rightarrow R_i/\langle f_1, f_2, \ldots, f_r \rangle \rightarrow 0.\]
\par This, however, is exact because $f_1, f_2, \ldots, f_r$ is a regular sequence (since we are starting with a complete intersection), where we can employ \cite[Tag 062F]{stacks-project}, which proves the claim.
\end{proof}
In order to compute $\chi(X, \Omega_X^q),$ we will also want to make explicit the conormal short exact sequence. Let $i: X \rightarrow A$ be the closed immersion.
\begin{lemma}\label{lem: complete intersection conormal}
We have the isomorphism $i^*\mathcal{I}_X \simeq \bigoplus\limits_{j=1}^r i^*\mathcal{O}(-H_j),$ with the morphism arising from the last morphism in Lemma \ref{lem: global Koszul}. 
\end{lemma}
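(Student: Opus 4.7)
The plan is to obtain this isomorphism by applying $i^*$ to the rightmost portion of the Koszul resolution in Lemma \ref{lem: global Koszul}. Since $i^*$ is right exact, applying it to the surjection $\bigoplus_{j=1}^r \mathcal{O}_A(-H_j) \to \mathcal{I}_X$ already yields a surjection $\bigoplus_{j=1}^r i^*\mathcal{O}_A(-H_j) \to i^*\mathcal{I}_X$, so the content of the lemma is really injectivity.

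To handle injectivity, I would truncate the Koszul resolution to its right exact tail
\[
\bigoplus_{\substack{S \subset \{1, \ldots, r\} \\ |S| = 2}} \mathcal{O}_A\left(-\sum_{k \in S} H_k\right) \xrightarrow{d_2} \bigoplus_{j=1}^r \mathcal{O}_A(-H_j) \xrightarrow{d_1} \mathcal{I}_X \to 0
\]
and observe that after applying $i^*$, the map $i^*d_1$ identifies $i^*\mathcal{I}_X$ with the cokernel of $i^*d_2$. It therefore suffices to show that $i^*d_2$ is the zero map, since then $i^*d_1$ furnishes the desired isomorphism.

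To verify this, I would pass to the affine open cover $U_i = \Spec R_i$ used in the proof of Lemma \ref{lem: global Koszul}, on which each $H_j$ is cut out by a single function $f_{ji} \in R_i$, and use the local trivializations $\mathcal{O}_A(-H_j)|_{U_i} \cong R_i$ and $\mathcal{O}_A(-H_j - H_k)|_{U_i} \cong R_i$ coming from the inclusions into $\mathcal{O}_A$. Under these identifications $d_2$ is the usual Koszul differential, sending the generator of the $\{j,k\}$-summand to $\pm f_{ji} e_k \mp f_{ki} e_j$, so each component lies in $\mathcal{I}_X \cdot \bigoplus_{j} \mathcal{O}_A(-H_j)|_{U_i}$. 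Any local section of the form $f \cdot m$ with $f \in \mathcal{I}_X$ is killed by $i^* = - \otimes_{\mathcal{O}_A} \mathcal{O}_X$, because $f \otimes 1 = 1 \otimes f = 0$ in $\mathcal{O}_X$. Hence $i^*d_2 \equiv 0$, and $i^*d_1$ is the claimed isomorphism.

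I do not foresee any real obstacle here: the essential point is simply that each component of the Koszul differential into the linear term is a multiple of some $f_{ji} \in \mathcal{I}_X$, which is built into the construction of the complex. The main care required is bookkeeping signs and trivializations to confirm that the induced map is literally the one coming from the last morphism of Lemma \ref{lem: global Koszul}.
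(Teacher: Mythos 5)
Your argument is correct, but it takes a different route from the paper. The paper disposes of this lemma in one line by identifying $i^*\mathcal{I}_X$ with the conormal sheaf $\mathcal{I}_X/\mathcal{I}_X^2$ and invoking the standard fact (cited as \cite[Proposition 21.2.16]{vakil}) that for a complete intersection this sheaf is free on the images of the defining equations, checked affine-locally where the hypersurfaces are principal. You instead truncate the resolution of Lemma \ref{lem: global Koszul} to $\mathcal{K}_2 \xrightarrow{d_2} \mathcal{K}_1 \xrightarrow{d_1} \mathcal{I}_X \to 0$, use right-exactness of $i^*$ to identify $i^*\mathcal{I}_X$ with $\coker(i^*d_2)$, and note that the entries of $d_2$ in local trivializations are (up to sign) the equations $f_{ji} \in \mathcal{I}_X$, so the image of $d_2$ lands in $\mathcal{I}_X \cdot \mathcal{K}_1$ and $i^*d_2 = 0$; hence $i^*d_1$ is an isomorphism, and it is literally the pullback of the last morphism of Lemma \ref{lem: global Koszul}, as the statement requires. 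This is sound: the injectivity that is the real content of the lemma is absorbed into the exactness of the Koszul complex at $\mathcal{K}_1$, which was already proved from the regular-sequence hypothesis, so your proof is self-contained modulo the earlier lemma and avoids the external citation, at the cost of a little bookkeeping with trivializations and signs (which, as you note, do not matter for the conclusion $i^*d_2=0$). Both arguments ultimately rest on the same fact that the relations among a regular sequence are generated by the Koszul relations; the paper's version is shorter, yours makes the mechanism explicit.
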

\begin{proof}
We obtain this isomorphism from \cite[Proposition 21.2.16]{vakil}, since affine-locally around $X$ (where the hypersurfaces are principal) we can check that the generators of $\mathcal{I}_X/\mathcal{I}_X^2$ are given by the generators of $\mathcal{I}_X,$ which are given by $i^*\mathcal{O}_A(-H_i).$ 
\end{proof}
\par In particular, this means that the conormal exact sequence for $X$ looks like the following:
\[0 \rightarrow \bigoplus\limits_{j=1}^r i^*\mathcal{O}(-H_j) \rightarrow i^*\Omega^1_{A/K} \rightarrow \Omega_{X/K}^1 \rightarrow 0.\]
\par Finally, we recall for an abelian variety that the cotangent bundle is actually trivial.
\begin{lemma}[Corollary 3 from \S 4.2 of \cite{BLR}]
$\Omega^i_{A/K}$ is a trivial vector bundle of rank $\binom{n}{i}.$
\end{lemma}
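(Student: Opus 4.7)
The plan is to deduce triviality of all the exterior powers from triviality of $\Omega^1_{A/K}$, and the latter follows from a standard translation-invariance argument available for any smooth group scheme over a field.

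First, I would establish that $\Omega^1_{A/K}$ is trivial of rank $n$. The key input is that $A$ carries a group structure, so one has the multiplication map $\mu \colon A \times_K A \to A$ and the identity section $e \colon \Spec K \to A$. Using these, one constructs a canonical isomorphism
\[
\Omega^1_{A/K} \xrightarrow{\ \sim\ } \mathcal{O}_A \otimes_K e^*\Omega^1_{A/K}.
\]
Concretely, to any cotangent vector $\xi \in e^*\Omega^1_{A/K}$ at the identity one associates the unique left-translation-invariant global $1$-form whose value at $e$ is $\xi$; this is done by pulling $\xi$ back along the projection $p_2 \colon A \times_K A \to A$ composed with $\mu$, and using the section. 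Conversely, any translation-invariant $1$-form is determined by its value at $e$. Since $e^*\Omega^1_{A/K}$ is a $K$-vector space of dimension $n$, one gets a basis of global sections that trivialize $\Omega^1_{A/K}$. This is exactly the content of the Corollary cited from \cite{BLR}.

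Second, I would pass from $i = 1$ to general $i$ by an exterior-power argument: for any smooth $A$ one has the natural identification $\Omega^i_{A/K} = \bigwedge^i \Omega^1_{A/K}$, and the $i$-th exterior power of a free $\mathcal{O}_A$-module of rank $n$ is a free $\mathcal{O}_A$-module of rank $\binom{n}{i}$. A trivializing basis is given by the wedge products of any $i$ of the $n$ translation-invariant $1$-forms constructed above.

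The only real content is the first step, i.e.\ writing down the trivialization of $\Omega^1_{A/K}$ via translation invariance; once that is in hand, the passage to $\Omega^i_{A/K}$ is formal. There is no substantive obstacle here, since the argument is a textbook fact about group schemes, which is why the statement is simply cited from \cite[\S 4.2, Corollary 3]{BLR}.
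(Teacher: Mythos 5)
Your argument is correct and is essentially the same as the paper's, which gives no independent proof but simply cites \cite[\S 4.2, Corollary 3]{BLR}; the content of that corollary is precisely the translation-invariance trivialization of $\Omega^1_{A/K}$ you describe, and the passage to $\Omega^i_{A/K} = \bigwedge^i \Omega^1_{A/K}$ of rank $\binom{n}{i}$ is formal, as you note.
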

\subsection{Computing the Euler Characteristics}
We now aim to compute $\chi(\Omega^q_{X/K})$ as a function of $q.$ To do this, we will compute a more general expression, namely $$\chi(\Omega^q_{X/K} \otimes i^*\mathcal{O}_A(d_1H_1 + d_2H_2 + \cdots + d_rH_r + mH)),$$ where $H$ is any given divisor, as a polynomial of the variables $d_1, d_2, \ldots, d_r, m \in \mathbb{Z},$ as we can utilize induction to relate these polynomials together. For almost all of our computations, we will only need to assume $m = 0;$ however for $q = 0$ we will want to take $m$ to be any integer. For ease of notation, we will write \[\Omega^q_{X/K} \otimes i^*\mathcal{O}_A(d_1H_1 + d_2H_2 + \cdots + d_rH_r + mH) = \Omega^q_{X/K}(d_1H_1 + d_2H_2 + \cdots + d_rH_r + mH).\] Let $P_q(d_1, d_2, \ldots, d_r, m)$ be this Euler characteristic; we will let this polynomial be identically zero for $q < 0.$
\par To start, note that taking the wedge product of the conormal exact sequence of vector bundles, we find that $i^*\Omega^q_{A/K} = \mathcal{O}_X^{\binom{n}{q}}$ has a filtration, with graded pieces arising from $\Omega^p_{X/K} \otimes \bigwedge^{q-p} (\bigoplus\limits_{j=1}^r i^*\mathcal{O}(-H_j)).$ See \cite[Exercise 14.2.N]{vakil}, for instance. As tensoring with line bundles is exact, and Euler characteristics add in exact sequences, we thus obtain the recurrence relation \[\binom{n}{q} P_0(d_1, d_2, \ldots, d_r, m) = \sum\limits_{s \in F(r, 1) } P_{q - |s|}(d_1 - s(1), d_2 - s(2), \ldots, d_r - s(r), m).\]
\par Note that, in particular, this equation expresses $P_q$ in terms of $P_0, P_1, \ldots, P_{q-1}.$ Hence, to compute all of the $P_q,$ we turn to computing $P_0,$ allowing $m \neq 0.$
\begin{lemma}\label{lem: zeroth wedge characteristic}
We have that \[P_0(d_1, d_2, \ldots, d_r, m) = \frac{1}{n!} \sum\limits_{\epsilon \in F(r, 1)} \left(mH + \sum (d_i - \epsilon(i)) H_i\right)^n (-1)^{|\epsilon|},\] with the sum and product on the right-hand side being interpreted as the intersection product.
\end{lemma}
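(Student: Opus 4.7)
The plan is to reduce the Euler characteristic on $X$ to a sum of Euler characteristics of line bundles on $A$ via the Koszul resolution, and then apply Riemann-Roch on the abelian variety.

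First I would extend the Koszul sequence of Lemma \ref{lem: global Koszul} to a resolution of $i_*\mathcal{O}_X$ by combining it with the canonical sequence $0 \to \mathcal{I}_X \to \mathcal{O}_A \to i_*\mathcal{O}_X \to 0$. This yields an exact sequence on $A$ whose terms are indexed by subsets $S \subseteq \{1,\ldots,r\}$, where the term at index $S$ is $\mathcal{O}_A(-\sum_{i \in S} H_i)$ (the $|S|=0$ term contributing $\mathcal{O}_A$ itself). Since each term is a line bundle, tensoring with the line bundle $\mathcal{L} := \mathcal{O}_A(d_1H_1 + \cdots + d_rH_r + mH)$ preserves exactness, giving a resolution of $i_*\mathcal{O}_X \otimes \mathcal{L}$.

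Next I would use additivity of Euler characteristics on exact sequences, together with the projection formula and the fact that closed immersions are affine (so pushforward is exact and preserves cohomology), to write
\[P_0(d_1,\ldots,d_r,m) = \chi(X, i^*\mathcal{L}) = \sum_{S \subseteq \{1,\ldots,r\}} (-1)^{|S|} \chi\!\left(A, \mathcal{O}_A\!\left(mH + \sum_i d_i H_i - \sum_{i \in S} H_i\right)\right).\]
Re-indexing subsets $S$ by their characteristic functions $\epsilon \in F(r,1)$ (so $\epsilon(i)=1$ iff $i \in S$, and $|\epsilon|=|S|$) recovers the indexing set in the statement.

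Finally I would apply Riemann-Roch for abelian varieties (see, e.g., \cite[\S 16]{BLR}), which states that for any line bundle $L$ on an abelian variety $A$ of dimension $n$, $\chi(A, L) = \frac{L^n}{n!}$ computed as an intersection number. Substituting this expression for each term in the alternating sum yields exactly the claimed formula. The steps here are each essentially formal; the only point requiring care is the sign bookkeeping in the Koszul resolution and matching the indexing with subsets to the indexing with $F(r,1)$, but this is straightforward once one unpacks the conventions. The genuine content of the lemma is the Riemann-Roch input, with the Koszul resolution playing the role of reducing cohomology on $X$ to cohomology of line bundles on the ambient abelian variety.
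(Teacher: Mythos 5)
Your proposal is correct and follows essentially the same route as the paper's own proof: extend the Koszul complex by $0 \to \mathcal{I}_X \to \mathcal{O}_A \to i_*\mathcal{O}_X \to 0$, tensor with the line bundle, use additivity of Euler characteristics together with the projection formula and affineness of $i$, and finish with Riemann-Roch on abelian varieties. (One minor slip: the Riemann-Roch reference should be to Mumford's \emph{Abelian Varieties}, \S 16, not to \cite{BLR}.)
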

\begin{proof}
We use the short exact sequence $$0 \rightarrow \mathcal{I}_X \rightarrow \mathcal{O}_A \rightarrow i_*\mathcal{O}_X \rightarrow 0$$ along with Lemma \ref{lem: global Koszul}. We obtain the exact sequence \[0 \rightarrow \bigoplus\limits_{\substack{S \in \{1, 2, \ldots, r\}\\|S| = r}} \mathcal{O}_A(-\sum\limits_{i \in S} H_i) \rightarrow \bigoplus\limits_{\substack{S \in \{1, 2, \ldots, r\}\\|S| = r-1}} \mathcal{O}_A(-\sum\limits_{i \in S} H_i) \rightarrow \cdots \bigoplus\limits_{\substack{S \in \{1, 2, \ldots, r\}\\|S| = 1}} \mathcal{O}_A(-\sum\limits_{i \in S} H_i) \rightarrow \mathcal{O}_A \rightarrow i_*\mathcal{O}_X \rightarrow 0\] of sheaves (with all but the last being vector bundles). Now, tensor with $\mathcal{O}_A(d_1H_1 + d_2H_2 + \cdots d_rH_r + mH),$ and use the fact that the alternating sum of the Euler characteristics of these bundles sums to zero to find that \[\chi(A, i_*\mathcal{O}_X(d_1H_1 + \cdots + d_rH_r + mH)) = \sum\limits_{\epsilon \in F(r, 1)}(-1)^{|\epsilon|}\chi\left(A, \mathcal{O}_A\left(\sum\limits_{i=1}^r (d_i - \epsilon(i))H_i + mH\right)\right).\]
\par Now, we can use Riemann-Roch on abelian varieties (see \cite[\S 16]{mumford}) to compute that \[\chi(A, \mathcal{O}_A(D)) = D^n/n!\] for any divisor $D.$ Applying this to the above formula this yields \[\chi(A, i_*\mathcal{O}_X(d_1H_1 + \cdots + d_rH_r + mH)) = \frac{1}{n!} \sum\limits_{\epsilon \in F(r, 1)}(-1)^{|\epsilon|}\left(mH + \sum\limits_{i=1}^r (d_i - \epsilon(i))H_i\right)^n.\] Finally, since $i: X \rightarrow A$ is a closed immersion (so in particular, affine), and by the projection formula we have that $i_*\mathcal{O}_X \otimes \mathcal{O}_A(d_1H_1 + \cdots + d_rH_r + mH) = i_*(\mathcal{O}_X \otimes i^*\mathcal{O}_A(d_1H_1 + \cdots + d_rH_r + mH)),$ we have that \[\chi(A, i_*\mathcal{O}_X(d_1H_1 + \cdots + d_rH_r + mH)) = \chi(X, i^*\mathcal{O}_A(d_1H_1 + \cdots + d_rH_r + mH)) = P_0(d_1, d_2, \ldots, d_r, m).\] This proves the lemma.
\end{proof}
Having established the base case, we can now compute all of the polynomials, in the case where $m = 0.$ 
\begin{prop}\label{prop: euler characteristics}
We have the polynomial $P_q(d_1, d_2, \ldots, d_r, 0)$ equals \[\frac{1}{n!} \sum\limits_{\epsilon \in F(r, n), |\epsilon| = n} \binom{n}{\epsilon(1), \epsilon(2), \ldots, \epsilon(r)} H_1^{\epsilon(1)} H_2^{\epsilon(2)}\cdots H _r^{\epsilon(r)} c_{q; \epsilon(1), \epsilon(2), \ldots, \epsilon(n)}(d_1, d_2, \ldots, d_r),\] where \[c_{q; \epsilon(1), \epsilon(2), \ldots, \epsilon(n)}(d_1, d_2, \ldots, d_r) = \sum\limits_{s_1, s_2, \ldots, s_r \geq 0} (-1)^{\sum s_i}\binom{n}{q-\sum s_i} \prod\limits_{i=1}^r \left((d_i - s_i)^{\epsilon(i)} - (d_i - s_i - 1)^{\epsilon(i)}\right).\]
\end{prop}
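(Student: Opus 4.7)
The plan is to proceed by induction on $q$, using the recurrence
$$\binom{n}{q}\, P_0(d_1, \ldots, d_r, m) = \sum_{s \in F(r,1)} P_{q - |s|}(d_1 - s(1), \ldots, d_r - s(r), m)$$
derived from the filtration on $i^*\Omega^q_{A/K}$ induced by the conormal sequence, together with the base case provided by Lemma \ref{lem: zeroth wedge characteristic}. Let $R_q(d_1, \ldots, d_r)$ denote the proposed right-hand side. Isolating the $s = 0$ summand in the recurrence expresses $P_q(\cdot, 0)$ uniquely in terms of $P_0(\cdot, 0), \ldots, P_{q-1}(\cdot, 0)$, so it is enough to check that $R_0 = P_0(\cdot, 0)$ and that $R_q$ satisfies the same recurrence; induction then forces $R_q = P_q(\cdot, 0)$ for all $q$.

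For the base case, I would expand $P_0(d_1, \ldots, d_r, 0)$ via the multinomial theorem: $\bigl(\sum_i (d_i - \epsilon(i)) H_i\bigr)^n = \sum_\alpha \binom{n}{\alpha(1), \ldots, \alpha(r)} \prod_i (d_i - \epsilon(i))^{\alpha(i)} H_i^{\alpha(i)}$, and the outer sum over $\epsilon \in F(r,1)$ weighted by $(-1)^{|\epsilon|}$ factors coordinatewise to yield $\prod_i [d_i^{\alpha(i)} - (d_i - 1)^{\alpha(i)}]$. On the $R_0$ side, the binomial coefficient $\binom{n}{-\sum s_i}$ vanishes unless every $s_i = 0$, giving $c_{0;\epsilon}(d) = \prod_i [d_i^{\epsilon(i)} - (d_i - 1)^{\epsilon(i)}]$; the two expressions agree.

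For the inductive step, since the factors $\binom{n}{\epsilon(1), \ldots, \epsilon(r)} \prod_i H_i^{\epsilon(i)}$ are unaffected by the recurrence, it suffices to verify, for each $\epsilon$ with $|\epsilon| = n$, the combinatorial identity
$$\sum_{s \in F(r,1)} c_{q-|s|;\epsilon}(d_1 - s(1), \ldots, d_r - s(r)) = \binom{n}{q} \prod_i [d_i^{\epsilon(i)} - (d_i - 1)^{\epsilon(i)}].$$
Expanding each $c_{q-|s|;\epsilon}(d - s)$ with dummy variables $t_i \geq 0$ and substituting $u_i = s(i) + t_i$, the combined sum regroups as $\sum_u \binom{n}{q - \sum u_i} \prod_i [(d_i - u_i)^{\epsilon(i)} - (d_i - u_i - 1)^{\epsilon(i)}]$ weighted by the sum of $(-1)^{t_i}$ over decompositions $s(i) + t_i = u_i$ with $s(i) \in \{0,1\}$ and $t_i \geq 0$. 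For $u_i = 0$ the unique decomposition contributes weight $1$; for $u_i \geq 1$ the contributions $(-1)^{u_i}$ from $(0, u_i)$ and $-(-1)^{u_i}$ from $(1, u_i - 1)$ cancel exactly. Only the $u = 0$ term survives, producing precisely the desired right-hand side. The main obstacle is this final telescoping cancellation, but once the substitution $u_i = s(i) + t_i$ is in place it is elementary, and the induction then closes.
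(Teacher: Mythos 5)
Your proposal is correct and follows essentially the same route as the paper: induction on $q$ via the recurrence coming from the filtration of $i^*\Omega^q_{A/K}$, the base case from Lemma \ref{lem: zeroth wedge characteristic}, and a coefficientwise telescoping of signs (your substitution $u_i = s(i) + t_i$ with coordinatewise cancellation is the same computation the paper performs by grouping on $t_i + s(i)$ and summing $(-1)^{|s|}$ over nonzero $s$). The only cosmetic difference is that you phrase the induction as a uniqueness argument for the recurrence rather than solving for $c_{q;\epsilon}$ directly, which changes nothing of substance.
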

\begin{proof}
We induct on $q.$ The case $q = 0$ follows from Lemma \ref{lem: zeroth wedge characteristic} by expanding out $\left(\sum\limits_{i=1}^r (d_i - \epsilon(i))H_i\right)^n$ using the multlinearity of the intersection product. 
\par Now, suppose that we've proven this for $q = 0, 1, \ldots, j-1.$ We wish to compute $P_j(d_1, d_2, \ldots, d_r).$ Using the recurrence relation, we find that \[c_{q; \epsilon(1), \epsilon(2), \ldots, \epsilon(r)}(d_1, d_2, \ldots, d_r) = \binom{n}{r} c_{0; \epsilon(1), \epsilon(2), \ldots, \epsilon(r)}(d_1, d_2, \ldots, d_r)\] \[ - \sum\limits_{s \in F(r, 1), |s| \neq 0} c_{q - |s|; \epsilon(1), \epsilon(2), \ldots, \epsilon(r)}(d_1 - s(1), d_2 - s(2), \ldots, d_r - s(r))\] for each $\epsilon \in F(r, n)$ where $|\epsilon| = n.$ Substituting the expression as above, we thus find that the coefficient is $c_{q; \epsilon(1), \epsilon(2), \ldots, \epsilon(r)}(d_1, d_2, \ldots, d_r)$ equal to \begin{align*} &- \sum\limits_{\substack{s \in F(r, 1) \\ |s| \neq 0}} \sum\limits_{t_1, t_2, \ldots, t_r \geq 0} (-1)^{\sum t_i}\binom{n}{q - |s| -\sum t_i} \prod\limits_{i=1}^r \left((d_i - s(i) - t_i)^{\epsilon(i)} - (d_i - t_i - s(i) - 1)^{\epsilon(i)}\right) \\ & + \binom{n}{r}\prod\limits_{i=1}^r \left(d_i^{\epsilon(i)} - (d_i - 1)^{\epsilon(i)}\right).\end{align*}
\par However, we can group the terms in the first summation together by the value of $(t_1 + s(1), t_2 + s(2), \ldots, t_n + s(n));$ then, note that for each such tuple $(x_1, x_2, \ldots, x_n)$ that the sum of those terms is $$\binom{n}{q - \sum x_i} (-1)^{\sum x_i} \prod\limits_{i=1}^r \left((d_i - x_i)^{\epsilon(i)} - (d_i - x_i - 1)^{\epsilon(i)}\right) \sum\limits_{\substack{s \in F(r, 1) \\ |s| \neq 0, s(i) \leq x_i}} (-1)^{\sum s(i)}.$$ But this sum is equal to $$-\binom{n}{q - \sum x_i} (-1)^{\sum x_i} \prod\limits_{i=1}^r \left((d_i - x_i)^{\epsilon(i)} - (d_i - x_i - 1)^{\epsilon(i)}\right)$$ unless $(x_1, x_2, \ldots, x_n) = 0,$ where it equals zero (as this tuple is never attained).
\par Combining these facts together yields that $c_{q; \epsilon(1), \epsilon(2), \ldots, \epsilon(r)}(d_1, d_2, \ldots, d_r)$ is \[\binom{n}{r}\prod\limits_{i=1}^r \left(d_i^{\epsilon(i)} - (d_i - 1)^{\epsilon(i)}\right) + \sum\limits_{\substack{x_1, x_2, \ldots, x_n \geq 0 \\ (x_1, x_2, \ldots, x_n) \neq 0}} \binom{n}{q - \sum x_i} (-1)^{\sum x_i} \prod\limits_{i=1}^r \left((d_i - x_i)^{\epsilon(i)} - (d_i - x_i - 1)^{\epsilon(i)}\right).\] But this is precisely the coefficient above that we wanted.
\end{proof}
Substituting in $d_1, d_2, \ldots, d_r = 0,$ we find that the coefficients in the above expression for $\chi(X, \Omega^q_{X/K})$ are \[(-1)^{n-r}\sum\limits_{s_1, s_2, \ldots, s_r \geq 0} (-1)^{\sum s_i}\binom{n}{q-\sum s_i} \prod\limits_{i=1}^r \left((s_i + 1)^{\epsilon(i)}-s_i^{\epsilon(i)} \right).\]
\par We now rewrite this in terms of Eulerian numbers.
\begin{prop}\label{prop: expression for cohomology in terms of Eulerian numbers}
The above expression is also equal to \[(-1)^{n-q-r} \sum\limits_{t \in F(r, q), |t| = q}\prod\limits_{i=1}^r E(\epsilon(i), t(i)).\]
\end{prop}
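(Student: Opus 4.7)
The plan is to prove this as a purely algebraic identity by comparing the generating functions of both sides in a formal variable $x$. Fix $\epsilon \in F(r,n)$ with $|\epsilon| = n$, and let $L_q$ and $R_q$ denote the two expressions appearing in the proposition. I would establish $\sum_{q \geq 0} L_q\, x^q = \sum_{q \geq 0} R_q\, x^q$ in $\mathbb{Z}[[x]]$, from which the identity $L_q = R_q$ follows by extracting coefficients.

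For the LHS, the elementary identity $\sum_{q \geq 0} \binom{n}{q-m} x^q = x^m(1+x)^n$ decouples the sum over $q$ from the sums over $s_1, \ldots, s_r$, giving
\[
\sum_{q} L_q\, x^q \;=\; (-1)^{n-r}(1+x)^n \prod_{i=1}^r \sum_{s \geq 0} \bigl((s+1)^{\epsilon(i)} - s^{\epsilon(i)}\bigr)(-x)^s.
\]
The key step is then the telescoped Eulerian generating function
\[
\sum_{s \geq 0} \bigl((s+1)^m - s^m\bigr) y^s \;=\; \frac{A_m(y)}{(1-y)^m}, \qquad A_m(y) := \sum_{k=0}^{m-1} E(m,k)\, y^k,
\]
valid for all $m \geq 0$; this can be verified directly from Proposition \ref{prop: alternating summation formula for Eulerian numbers} by partial fractions (or equivalently from the Worpitzky identity). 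Setting $y = -x$ turns the denominator into $(1+x)^m$, and the hypothesis $|\epsilon| = n$ forces the outer factor $(1+x)^n$ to cancel exactly against $\prod_i (1+x)^{\epsilon(i)}$. The generating function therefore collapses to the clean product $(-1)^{n-r}\prod_{i=1}^r A_{\epsilon(i)}(-x)$.

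Expanding $\prod_{i=1}^r A_{\epsilon(i)}(-x) = \sum_{(t_1, \ldots, t_r)} (-1)^{\sum t_i}\prod_i E(\epsilon(i), t_i)\, x^{\sum t_i}$ and reading off the coefficient of $x^q$ gives exactly $(-1)^{n-q-r} \sum_{t \in F(r,q),\, |t|=q} \prod_{i=1}^r E(\epsilon(i), t(i))$, which is $R_q$ as claimed. The only real obstacle here is sign bookkeeping together with a careful verification of the generating function identity above; no combinatorial or geometric subtlety beyond that is required. An alternative route would be induction on $r$, peeling off one factor at a time and using the $r=1$ case, but the generating-function argument has the advantage of treating all $r$ uniformly.
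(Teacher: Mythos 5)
Your argument is correct, and it reaches the identity by a genuinely different (though closely related) route than the paper. The paper works coefficientwise: it rewrites Proposition \ref{prop: alternating summation formula for Eulerian numbers} in the telescoped form $E(m,k) = \sum_{i=0}^{k}(-1)^{k-i}\binom{m}{k-i}\bigl((i+1)^m - i^m\bigr)$, substitutes this into the Eulerian side, swaps the order of summation, and collapses the inner sum $\sum_{t \in F(r,q),\,|t|=q}\prod_i \binom{\epsilon(i)}{t(i)-s_i} = \binom{n}{q-\sum s_i}$ by Vandermonde. Your two generating-function ingredients are exactly these facts in disguise: the identity $\sum_{s\ge 0}\bigl((s+1)^m - s^m\bigr)y^s = A_m(y)/(1-y)^m$ is, upon multiplying by $(1-y)^m$ and extracting the coefficient of $y^k$, precisely the telescoped form of Proposition \ref{prop: alternating summation formula for Eulerian numbers} (so your "partial fractions" step is in fact immediate), and the cancellation $(1+x)^n = \prod_i (1+x)^{\epsilon(i)}$ is the generating-function form of the Vandermonde step. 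What your packaging buys is that all $q$ are treated simultaneously and the sign and reindexing bookkeeping over tuples $(s_1,\dots,s_r)$ and $t$ is absorbed into formal power series algebra in $\mathbb{Z}[[x]]$ (note both sides vanish for $q<0$ and each coefficient receives only finitely many contributions, so the interchange of sums is legitimate); what the paper's direct manipulation buys is that it never needs the closed form of the Eulerian polynomial generating function as a separate statement. One small convention point to keep in mind, which the paper also has to address: when some $\epsilon(i)=0$, both sides are zero under the convention $E(0,k)=0$, and your $A_0 \equiv 0$ matches this provided you read $(s+1)^0 - s^0$ as $0$ for all $s\ge 0$ (i.e.\ $0^0=1$); with that caveat your claim "valid for all $m\ge 0$" is fine.
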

\begin{proof}
We expand out this expression using Proposition \ref{prop: alternating summation formula for Eulerian numbers}, before re-arranging this to get the above expression. First, note that we rewrite this summation as \begin{align*}E(n, k)  & = \sum\limits_{i=1}^{k+1} (-1)^{k + 1 - i} i^n \binom{n+1}{ k + 1 - i} & = \sum\limits_{i=1}^{k+1} (-1)^{k + 1 - i} i^n \left(\binom{n}{ k + 1 - i} + \binom{n}{k-i}\right) \\ & = \sum\limits_{i=0}^{k}(-1)^{k-i}\binom{n}{k-i}((i+1)^n - i^n)\end{align*} for $n \geq 1.$ For $n = 0$ we can see that both sides of the final equality we obtain are zero, so this final expression is still valid.
\par Substituting this in the above expression, we get a sum of terms that are a binomial coefficient times $\prod\limits_{i=1}^r ((s_i+1)^{\epsilon(i)} - s_i^{\epsilon(i)}).$ We can group these together by these products. For the term given by the product of $(s_i+1)^{\epsilon(i)} - s_i^{\epsilon(i)},$ we see that $E(\epsilon(i), t(i))$ contributes a coefficient of $(-1)^{t(i) - s_i} \binom{\epsilon(i)}{t(i) - s_i}.$ Thus, we may rewrite the given expression in the proposition statement as the following:
\[(-1)^{n-q-r} \sum\limits_{t \in F(r, q), |t| = q} \sum\limits_{s_1, s_2, \ldots, s_r \geq 0} (-1)^{q + \sum s_i} \prod\limits_{i=1}^r \binom{\epsilon(i)}{t(i) - s_i} ((s_i+1)^{\epsilon(i)} - s_i^{\epsilon(i)}).\] Swap the order of summation to get \[(-1)^{n-q-r}  \sum\limits_{s_1, s_2, \ldots, s_r \geq 0} \sum\limits_{t \in F(r, q), |t| = q}(-1)^{q + \sum s_i} \prod\limits_{i=1}^r \binom{\epsilon(i)}{t(i) - s_i} ((s_i+1)^{\epsilon(i)} - s_i^{\epsilon(i)}).\] We now consider the expression $\sum\limits_{t \in F(r, q), |t| = q} \prod\limits_{i=1}^r \binom{\epsilon(i)}{t(i) - s_i}.$ Note that this expression counts the number of ways to choose $\sum (t(i)-s_i) = q - \sum s_i$ items among a superset of $\sum\limits_{i=1}^r \epsilon(i) = n,$ so this just equals $\binom{n}{q - \sum s_i}.$ Substituting this in yields the expression above.
\end{proof}
In particular, observe that $(-1)^{n-q-r}\chi(X, \Omega^q_{X/K})$ is nonnegative. Furthermore, observe that from Lemma \ref{lem: generalized Eulerian in terms of Eulerian}, we see that if the $H_i$ are all equal to a given ample hypersurface class $H,$ $\chi(X, \Omega^q_{X/K}) = (-1)^{n-q-r}\frac{(H^n)}{n!} E_r(n, q).$
\subsection{Verifying the Numerical Conditions}
In order to show that the Shafarevich conjecture holds for complete intersections of hypersurfaces, we will need to verify a numerical condition on the Euler characteristics of $\Omega^q_{X/K}.$ Specifically, we will show the following.
\begin{prop}\label{prop: numerical condition}
Let $X$ be a smooth complete intersection of $r$ ample hypersurfaces in an abelian variety $A,$ defined over a number field $K.$ Then, we have that 
\[2\sum\limits_{q=0}^{n-r} |\chi(X, \Omega^q_{X/K})|^2 \leq (\sum\limits_{q=0}^{n-r} |\chi(X, \Omega^q_{X/K})|)^2.\]
\end{prop}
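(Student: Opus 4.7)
The plan is to combine Serre duality with the mixture decomposition from Proposition~\ref{prop: expression for cohomology in terms of Eulerian numbers}. Set $a_q := |\chi(X, \Omega^q_{X/K})|$, $S := \sum_q a_q$, and $N := n-r$. Serre duality on the smooth projective variety $X$ of dimension $N$ gives $\chi(X, \Omega^q) = (-1)^N \chi(X, \Omega^{N-q})$, so the sequence $(a_q)$ is palindromic around $N/2$. I split into cases by the parity of $N$.

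When $N$ is odd, the palindromic structure alone suffices: by symmetry $S = 2\sum_{q \leq (N-1)/2} a_q$ and $\sum_q a_q^2 = 2\sum_{q \leq (N-1)/2} a_q^2$, hence $S^2 = 4\bigl(\sum_{q \leq (N-1)/2} a_q\bigr)^2 \geq 4\sum_{q \leq (N-1)/2} a_q^2 = 2 \sum_q a_q^2$, with the middle step being the trivial inequality $(\sum x_i)^2 \geq \sum x_i^2$ for $x_i \geq 0$. When $N$ is even, set $M := N/2$, $R := \sum_{q<M} a_q$, and $S_2 := \sum_{q<M} a_q^2$, so that $S = a_M + 2R$ and $\sum_q a_q^2 = a_M^2 + 2 S_2$; the desired inequality rearranges to $4 a_M R + 4(R^2 - S_2) \geq a_M^2$, and since $R^2 \geq S_2$ by the same nonnegativity identity, it suffices to show $a_M \leq 4R$, equivalently $a_M \leq \tfrac{2}{3}S$.

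To bound $a_M/S$, I apply Proposition~\ref{prop: expression for cohomology in terms of Eulerian numbers} to write $a_q = \sum_\epsilon C_\epsilon b^{(\epsilon)}_q$ with $C_\epsilon \geq 0$ and $b^{(\epsilon)}_q := [x^q]\prod_i A_{\epsilon_i}(x)$, where $A_m(x) = \sum_k E(m,k) x^k$ is the $m$th Eulerian polynomial. Normalizing by $S_\epsilon := \sum_q b^{(\epsilon)}_q = \prod_i \epsilon_i!$, the sequence $a_q/S$ becomes a convex combination (with weights $C_\epsilon S_\epsilon/S$) of the probability distributions $p^{(\epsilon)}_q := b^{(\epsilon)}_q/S_\epsilon$, and each $p^{(\epsilon)}$ is the law of $\sum_i X_i$ for independent $X_i$ counting descents of a uniform random permutation of $\{1,\dots,\epsilon_i\}$. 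Hence $a_M/S \leq \max_\epsilon p^{(\epsilon)}_M$. For each $\epsilon$, the elementary convolution bound $\Pr(Y+Z=c) \leq \max_k \Pr(Y=k)$ for independent integer-valued $Y,Z$ yields $p^{(\epsilon)}_M \leq \min_i \max_k E(\epsilon_i, k)/\epsilon_i!$. Since $\sum_i(\epsilon_i - 1) = N \geq 2$ in the even case forces some $\epsilon_i \geq 2$, everything reduces to a single estimate on Eulerian numbers: $\max_k E(m,k)/m! \leq \tfrac{2}{3}$ for $m \geq 2$.

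The main obstacle is this final estimate, which is sharp at $m=3$ (where the central value is exactly $4/6 = 2/3$). For $m \in \{2,3,4,5\}$ it is immediate by direct computation. For $m \geq 6$, I plan to argue by induction via the recurrence $E(m,k) = (m-k)E(m-1,k-1) + (k+1)E(m-1,k)$, using log-concavity of each Eulerian row to locate the mode and a slightly strengthened inductive hypothesis (e.g.\ $\max_k E(m,k)/m! \leq 11/20$ for $m \geq 4$, matching the attained values for $m=5,7,\dots$); a local central limit estimate for the descent statistic, which gives central value $\sim\sqrt{6/(\pi m)}$, serves as an alternative for large $m$. Once this Eulerian bound is established, the rest of the proof is routine algebraic manipulation.
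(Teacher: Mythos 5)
Your overall architecture is sound and genuinely different from the paper's: the paper proves the stronger termwise bound $2|\chi(X,\Omega^q_{X/K})| \leq \sum_q |\chi(X,\Omega^q_{X/K})|$ for every $q$ (which fails in dimensions $2$ and $4$, forcing a separate brute-force check with explicit intersection numbers there), whereas your use of Serre-duality palindromicity reduces everything to bounding only the middle term by $\tfrac{2}{3}$ of the total, so the sharp small cases never need special treatment. The reduction steps are correct: the even/odd split, the algebra showing $a_M \leq 4R$ suffices, the convex-combination (mediant) bound using positivity of the weights $C_\epsilon$ (ampleness of the $H_i$), and the convolution bound $\Pr(Y+Z=c)\leq \max_k \Pr(Y=k)$, provided you discard the $\epsilon$ with some $\epsilon_i=0$ (for those, $\sum_q b^{(\epsilon)}_q = 0 \neq \prod_i \epsilon_i!$, but they contribute nothing). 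Like the paper, you implicitly assume $\dim X \geq 1$; for $\dim X = 0$ the stated inequality is false, so this hypothesis should be made explicit in either treatment.

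The genuine gap is that the linchpin estimate $\max_k E(m,k)/m! \leq \tfrac{2}{3}$ for $m \geq 2$ is only announced, not proved, and the induction you sketch does not close as stated: from a row bound $\max_k E(m-1,k) \leq c\,(m-1)!$ the recurrence $E(m,k) = (m-k)E(m-1,k-1)+(k+1)E(m-1,k)$ only gives $E(m,k) \leq c\,(m+1)!/m$, a loss of the factor $(m+1)/m$, so a single-row maximum hypothesis (whether $2/3$ or $11/20$) does not propagate; you would need to track at least the sum of the two central entries of a row (and note that your claimed value $11/20$ is not attained at $m=7$, where the central ratio is $2416/5040 \approx 0.479$, so the "matching values" heuristic is off). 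The local CLT alternative is true asymptotically but needs explicit error control to cover all $m$, which is more work than the rest of your argument. The estimate itself is correct and can be closed cleanly: check $m=2,3,4$ by hand, and for $m\geq 5$ use that $E(m,k)/m!$ is the volume of the slab $\{k \leq x_1+\cdots+x_m < k+1\}$ in the unit cube, i.e.\ the value $f_{m+1}(k+1)$ of the Irwin--Hall density, whose supremum is non-increasing in $m$ (convolving with the uniform density on $[0,1]$ cannot increase the sup-norm) and already equals $2/3$ at $m+1 = 4$. With that lemma in place, your proof is complete and arguably simpler than the paper's case analysis.
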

We first observe that it is enough to show that $2|\chi(X, \Omega^q_{X/K})| \leq \sum\limits_{q=0}^{n-r}|\chi(X, \Omega^q_{X/K})|$ for each $q.$ To see that this is enough, note that we then have that \[\sum\limits_{q=0}^{n-r} |\chi(X, \Omega^q_{X/K})|\left(2|\chi(X, \Omega^q_{X/K})| - \sum\limits_{q=0}^{n-r}|\chi(X, \Omega^q_{X/K})|\right) \leq 0,\] where re-arranging yields the desired inequality. We first prove the following.
\begin{lemma}\label{lem: most cases numerical condition}
If $X$ is not dimension $2$ or $4,$ we have $2|\chi(X, \Omega^q_{X/K})| \leq \sum\limits_{q=0}^{n-r}|\chi(X, \Omega^q_{X/K})|$ for each $q.$
\end{lemma}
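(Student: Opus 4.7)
The plan is to reduce everything to a per-configuration inequality and handle it combinatorially. By Proposition \ref{prop: expression for cohomology in terms of Eulerian numbers} together with the positivity of the top intersection numbers $H_1^{\epsilon(1)} \cdots H_r^{\epsilon(r)}$ for the ample $H_i$, the quantity $|\chi(X, \Omega^q_{X/K})|$ is a nonnegative linear combination of the numbers $c_q^\epsilon := \sum_{|t|=q}\prod_i E(\epsilon(i), t(i))$, summed over $\epsilon \in F(r, n)$ with $|\epsilon|=n$ (terms with some $\epsilon(i)=0$ vanish since $E(0, k) = 0$). Since $\sum_{q'} c_{q'}^\epsilon = \prod_i \epsilon(i)! =: S_\epsilon$, it suffices to prove the per-configuration inequality $2 c_q^\epsilon \leq S_\epsilon$ for every $\epsilon$ with all $\epsilon(i) \geq 1$ and every $q$. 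Each $c^\epsilon$ is the convolution of the individual Eulerian sequences $E(\epsilon(i), \cdot)$, so by Section \ref{subsection: generalized eulerian} it is log-concave and symmetric about $(n-r)/2$, with max at the two middle indices $\lfloor (n-r)/2 \rfloor$ and $\lceil (n-r)/2 \rceil$. When $n - r$ is odd these indices are distinct and equal, making the inequality automatic; the binding case is $n - r$ even.

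Next I stratify by $|I(\epsilon)| := |\{i : \epsilon(i) \geq 2\}|$. If $|I(\epsilon)| = 1$, the convolution collapses to a single Eulerian sequence $E(m, \cdot)$ with $m = n - r + 1$ (the $\epsilon(i) = 1$ coordinates contribute only $E(1, 0) = 1$). For $m$ even the desired bound $2 E(m, k) \leq m!$ is automatic by symmetry. For $m$ odd, the recurrence $E(m, (m-1)/2) = (m+1) E(m-1, (m-3)/2)$, together with the strict version of the even-case bound for $m-1$, reduces the inequality to $E(m-1, (m-3)/2)/(m-1)! \leq m/(2(m+1))$. This holds for $m \geq 7$: I would verify by direct computation that the ratio $E(k, \lfloor (k-1)/2 \rfloor)/k!$ is monotonically decreasing in even $k$ (from $11/24$ at $k = 4$ down through $302/720 \approx 0.419$ at $k = 6$), and then use that $m/(2(m+1)) \geq 7/16$ for $m \geq 7$ to close the bound. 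The inequality fails exactly at $m \in \{1, 3, 5\}$, corresponding to $\dim X \in \{0, 2, 4\}$, which accounts for the excluded dimensions.

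For $|I(\epsilon)| \geq 2$, I would pick any two coordinates $i \neq j$ with $a := \epsilon(i)$ and $b := \epsilon(j)$ at least $2$, and write $c^\epsilon = (E(a, \cdot) \ast E(b, \cdot)) \ast c^{\epsilon'}$ for the residual configuration $\epsilon'$. The elementary bound $\max(f \ast g) \leq (\max f)(\sum g)$ reduces the inequality to $2 \max(E(a, \cdot) \ast E(b, \cdot)) \leq a! b!$ for all $a, b \geq 2$. When $a + b$ is odd this is automatic from the symmetry of the convolution. When $a + b$ is even, direct computation shows that equality holds only at $(a, b) \in \{(2, 2), (3, 3)\}$, both of which force $\dim X \in \{2, 4\}$ and are thus excluded, while strict inequality holds at all other $(a, b)$. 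The main obstacle is making this convolution bound uniform for large $a + b$; I expect to handle it by combining log-concavity and symmetry of $E(a, \cdot) \ast E(b, \cdot)$ with a variance-growth estimate (the convolved distribution has variance $(a + b + 2)/12$), potentially invoking Lemma \ref{lem: Eulerian asymptotic} to make the decay of the central value quantitative.
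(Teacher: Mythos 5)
Your opening reduction is exactly the paper's: expand via Proposition \ref{prop: expression for cohomology in terms of Eulerian numbers}, use positivity of the intersection numbers, and prove the per-configuration inequality $2c_q^\epsilon \le \prod_i \epsilon(i)!$; your bound $\max(f\ast g)\le(\max f)(\sum g)$ is also the same one-factor trick the paper uses. The genuine gaps are in the two places where you need uniform control of \emph{central} values of (convolved) Eulerian sequences. In your $|I(\epsilon)|\ge 2$ case you reduce everything to $2\max_u\bigl(E(a,\cdot)\ast E(b,\cdot)\bigr)(u)\le a!\,b!$ for \emph{all} $a,b\ge 2$, but you only verify small cases and leave the large-$(a+b)$ regime to "log-concavity, symmetry, a variance-growth estimate, and possibly Lemma \ref{lem: Eulerian asymptotic}." None of these closes it: Lemma \ref{lem: Eulerian asymptotic} gives $E(n,k)\le\binom{k+r-1}{r-1}(k+r)^n$, which at the central index $k\approx n/2$ is of order $(n/2)^n\gg n!$ and hence says nothing about the maximum of the distribution, and no local-limit or variance estimate is available in the paper; "variance $(a+b+2)/12$" is a heuristic, not a bound. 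Similarly, in the $|I(\epsilon)|=1$ case your identity $E(m,\tfrac{m-1}{2})=(m+1)E(m-1,\tfrac{m-3}{2})$ is correct, but the claim that $E(k,\lfloor(k-1)/2\rfloor)/k!$ decreases in even $k$ concerns infinitely many $k$ and cannot be "verified by direct computation"; you need an inductive argument (the paper proves exactly such a statement, checking $n=7,9$ explicitly and then using the Eulerian recurrence to show the neighbour-to-middle ratio stays above $1/2$ for odd $n\ge 11$). Also, the log-concavity you cite from Section \ref{subsection: generalized eulerian} is proved there for $E_r(n,\cdot)$, not for the per-$\epsilon$ convolution, though for your parity observation symmetry alone suffices.

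The structural point worth absorbing from the paper is that its case split avoids your hard uniform convolution bound entirely. The single-coordinate inequality $2E(m,k)\le m!$ holds for every $m\notin\{1,3,5\}$ (even $m$ by symmetry, odd $m\ge 7$ by the induction above), and applying the one-factor-max trick at \emph{any} coordinate with $\epsilon(i)\notin\{1,3,5\}$ already finishes that configuration. Convolution bounds are then needed only when every coordinate lies in $\{1,3,5\}$ with at least two in $\{3,5\}$, i.e.\ only for the pairs $(3,3),(3,5),(5,5)$, which are finite explicit computations; the residual case, one coordinate in $\{3,5\}$ and the rest equal to $1$, is precisely $\dim X\in\{2,4\}$. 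If you reorganize your $|I(\epsilon)|\ge 2$ case this way, your proposal goes through without the unproved uniform bound. A minor further point: your assertion that the equality cases $(a,b)=(2,2),(3,3)$ "force $\dim X\in\{2,4\}$" is false when other coordinates are present, but this is harmless, since the lemma only requires the non-strict inequality.
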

\begin{proof}
It suffices, using Proposition \ref{prop: expression for cohomology in terms of Eulerian numbers} for $|\chi(X, \Omega^q_{X/K})| = (-1)^{n-q-r}\chi(X, \Omega^q_{X/K}),$ to prove that \[2\sum\limits_{\substack{t \in F(r, q) \\ |t| = q}}\prod\limits_{i=1}^r E(\epsilon(i), t(i)) \leq \sum\limits_{q=0}^{n-r} \sum\limits_{\substack{t \in F(r, q) \\ |t| = q}}\prod\limits_{i=1}^r E(\epsilon(i), t(i))\] for each choice of $\epsilon \in F(r, n)$ with $|\epsilon| = n,$ and for each $q.$ However, note that we can rewrite the desired inequality to be proven as \[2\sum\limits_{t(1), t(2), \ldots, t(r-1) \geq 0} E(\epsilon(r), q - \sum\limits_{i=1}^{r-1} t(i))\prod\limits_{i=1}^{r-1} E(\epsilon(i), t(i))\] \[\leq \sum\limits_{q=0}^{n-r} \sum\limits_{t(1), t(2), \ldots, t(r-1) \geq 0}  E(\epsilon(r), q - \sum\limits_{i=1}^{r-1} t(i))\prod\limits_{i=1}^{r-1} E(\epsilon(i), t(i)),\] since the only new terms that appear on both sides are zeroes (where we are taking Eulerian numbers $E(n, k)$ where $k \geq n$ or $k < n$). By this same logic, we can rewrite the right-hand side as just the sum $\sum\limits_{t(1), t(2), \ldots, t(r) \geq 0}  E(\epsilon(r), t(r))\prod\limits_{i=1}^{r-1} E(\epsilon(i), t(i)),$ since the only nonzero terms that will appear are those functions $t$ where $0 \leq t(i) < \epsilon(i)$ for each $i;$ in particular, we have $|t|$ lying between $0$ and $n - r$ inclusive.
\par We can hence rewrite the desired inequality as \[2\sum\limits_{\substack{t(1), t(2), \ldots, t(r) \geq 0 \\ \sum\limits_{i=1}^r t(i) = q}} E(\epsilon(r), t(r))\prod\limits_{i=1}^{r-1} E(\epsilon(i), t(i)) \leq \sum\limits_{t(1), t(2), \ldots, t(r) \geq 0}  E(\epsilon(r), t(r))\prod\limits_{i=1}^{r-1} E(\epsilon(i), t(i)),\]
\par We first claim that $2 E(n, k) \leq \sum\limits_{k=0}^{n-1} E(n, k)$ for each $n, k \geq 0$ (unless $(n, k) = (3, 1), (5, 2)$ or $(n, k) = (1, 0)$), since we can then apply this to $n = \epsilon(r)$ and $k = q - \sum\limits_{i=1}^{r-1} t(i).$ If $n = 0$ this is clear since both sides are zero. Furthermore, if $n$ is even, we know this since the Eulerian numbers are symmetric (and so there are two values of $k$ which both attain the maximum). Finally, suppose that $n \geq 7$ is odd. We note that the Eulerian numbers for $n=7$ are given by $1, 120, 1191, 2416, 1191, 120, 1,$ with the middle number $2416$ being less than half of the total, and for $n = 9$ we have that the Eulerian numbers are $$1, 502, 14608, 88234, 156190, 88234, 14608, 502, 1,$$ with the ratio of the middle number to the next-to-middle numbers being less than $2.$ 
\par From this, we claim that this remains true for larger $n.$ To prove this, we can use the recurrence relation for Eulerian numbers to find that \begin{align*}\frac{E(n+2, \frac{n-1}{2})}{E(n+2, \frac{n+1}{2})}  & = \frac{\frac{n+5}{2} E(n+1, \frac{n-3}{2}) + \frac{n+1}{2}E(n+1, \frac{n-1}{2})}{\frac{n+3}{2} E(n+1, \frac{n-1}{2}) + \frac{n+3}{2}E(n+1, \frac{n+1}{2})} \\ & = \frac{\frac{n+5}{2} \frac{n+5}{2} E(n, \frac{n-5}{2}) + \frac{n+5}{2} \frac{n-1}{2} E(n, \frac{n-3}{2}) + \frac{n+1}{2} \frac{n+3}{2} E(n, \frac{n-3}{2}) + \frac{n+1}{2} \frac{n + 1}{2}E(n, \frac{n-1}{2})}{\frac{n+3}{2} \frac{n+3}{2} E(n, \frac{n-3}{2}) + \frac{n+3}{2} \frac{n+1}{2} E(n, \frac{n-1}{2}) + \frac{n+3}{2} \frac{n+1}{2} E(n, \frac{n-1}{2}) + \frac{n+3}{2} \frac{n + 3}{2}E(n, \frac{n+1}{2})}.\end{align*} 
\par Recall that $E(n, \frac{n-1}{2})$ is the middle, and the Eulerian numbers are symmetric about $\frac{n-1}{2}.$ Thus, we can write this as \[\frac{(n+5)^2 \frac{E(n, \frac{n-5}{2})}{E(n, \frac{n-1}{2})} + (2n^2 + 8n - 2)\frac{E(n, \frac{n-3}{2})}{E(n, \frac{n-1}{2})} + (n+1)^2}{2(n+3)^2 \frac{E(n, \frac{n-3}{2})}{E(n, \frac{n-1}{2})} + 2(n+3)(n+1)},\] which we can bound above by \[\frac{1}{2} + \frac{(n^2 + 2n - 11)\frac{E(n, \frac{n-3}{2})}{E(n, \frac{n-1}{2})} - 2(n+1)}{2(n+3)^2 \frac{E(n, \frac{n-3}{2})}{E(n, \frac{n-1}{2})} + 2(n+3)(n+1)}\] (as we know that $E(n, \frac{n-5}{2}) > 0$). However, observe that $(n^2 + 2n - 11)\frac{E(n, \frac{n-3}{2})}{E(n, \frac{n-1}{2})} - 2(n+1) > 0$ for $n \geq 11.$ We can prove this by induction: indeed, note that $\frac{2(n+1)}{n^2 + 2n - 11} < \frac{1}{4}$ for $n \geq 9;$ so if $\frac{E(n, \frac{n-3}{2})}{E(n, \frac{n-1}{2})} > \frac{1}{2},$ then the same holds for $\frac{E(n + 2, \frac{n-1}{2})}{E(n+2, \frac{n+1}{2})} > \frac{1}{2}$ as well. In particular, we see that the largest Eulerian number $E(n, k)$ for any given $n,$ $E(n, \frac{n-1}{2}),$ is less than half of $E(n, \frac{n-1}{2}) + E(n, \frac{n-3}{2}) + E(n, \frac{n+1}{2}),$ and thus less than half of the sum of all $E(n, k)$ (ranging over $k$). But this is precisely what we want.
\par In particular, if there is some $\epsilon(i)$ which is not $1, 3,$ or $5$ for every term, then we are done. Indeed, we can start with the inequality \[E(\epsilon(i), t(i)) \leq \sum\limits_{t(i) \geq 0} E(\epsilon(i), t(i)),\] multiply both sides by the sum of the terms $\prod\limits_{\substack{1 \leq j \leq r \\ j \neq i}} E(\epsilon(j), t(j))$ over tuples $(t(j))_{j \neq i}$ whose sum is $q - t(i),$ then sum this expression over all $t(i).$
\par Next, suppose there are two $\epsilon(i) = 3, 5.$ We observe we can apply a similar trick, where we want to show that $$2 \sum\limits_{i+j = k} E(a, i)E(b, j) \leq \sum\limits_{i, j} E(a, i)E(b, j)$$ for each $k,$ with $a, b \in \{3, 5\}.$ But the values for $\sum\limits_{i+j = k} E(3, i)E(3, j)$ are given by $1, 8, 18, 8, 1,$ where we can see the inequality is satisfied. Similarly, we can check this for $(a, b) = (3, 5), (5, 3)$ and $(a, b) = (5, 5),$ as the numbers we obtain are $$1, 30, 171, 316, 171, 30, 1$$ for the first two and $$1, 52, 808, 3484, 5710, 3484, 808, 52, 1;$$ in both cases the largest number is less than half the total sum. 
\par Hence, the only cases we are left with is the case when $\epsilon(i) = 3$ or $5$ for one $i,$ and $1$ for the rest. In other words, we require that $n = r + 2$ or $n = r + 4,$ so our complete intersection is either a two-dimensional variety or a $4$-dimensional variety, which is what we wanted to check.
\end{proof}
For the case when $\dim X = 2, 4,$ we have to take another approach to verify the numerical condition. For $\dim X = 2,$ we see that the Euler characteristics are given by \begin{align*}\chi(X, \mathcal{O}_X) & = \sum\limits_{i = 1}^r (X \cdot H_i^2) \frac{1}{6} + \sum\limits_{1 \leq i < j \leq r} (X \cdot H_i \cdot H_j) \frac{1}{4} \\ \chi(X, \Omega^1_{X/K}) & = \sum\limits_{i = 1}^r (X \cdot H_i^2) \frac{4}{6} + \sum\limits_{1 \leq i < j \leq r} (X \cdot H_i \cdot H_j) \frac{2}{4} \\ \chi(X, \Omega^2_{X/K}) & = \sum\limits_{i = 1}^r (X \cdot H_i^2) \frac{1}{6} + \sum\limits_{1 \leq i < j \leq r} (X \cdot H_i \cdot H_j) \frac{1}{4}.\end{align*} If we let $a = \sum\limits_{i = 1}^r (X \cdot H_i^2) \frac{1}{6}$ and $b = \sum\limits_{1 \leq i < j \leq r} (X \cdot H_i \cdot H_j) \frac{1}{4},$ we see that the left hand side of \[2\sum\limits_{q=0}^{n-r} |\chi(X, \Omega^q_{X/K})|^2 \leq (\sum\limits_{q=0}^{n-r} |\chi(X, \Omega^q_{X/K})|)^2\] is $36a^2 + 40ab + 12b^2$ and the right-hand side is $36a^2 + 48ab + 16b^2,$ so the inequality holds.
\par For the case where $\dim X = 4,$ we then define the following variables: \begin{align*}
A & = \frac{1}{120}\sum\limits_{i=1}^r (X \cdot H_i^4), \quad B_{1, 3}  = \frac{1}{48}\sum\limits_{\substack{1 \leq i, j \leq r \\ i \neq j}} (X \cdot H_i \cdot H_j^3), \quad B_{2, 2} = \frac{1}{36}\sum\limits_{1 \leq i < j \leq r} (X \cdot H_i^2 \cdot H_j^2)  \\ C & = \frac{1}{24}\sum\limits_{\substack{1 \leq i, j, k \leq r \\ i < j, i, j \neq k}} (X \cdot H_i \cdot H_j \cdot H_k^2), \quad D = \frac{1}{16}\sum\limits_{1 \leq i < j < k < l \leq r} (X \cdot H_i \cdot H_j \cdot H_k \cdot H_l).
\end{align*}
We then observe that the Euler characteristics are given by \begin{align*}
\chi(X, \mathcal{O}_X) & = A + B_{1, 3} + B_{2, 2} + C + D\\
\chi(X, \Omega^1_{X/K}) & = 26A + 12B_{1, 3} + 8B_{2, 2} + 6C + 4D\\
\chi(X, \Omega^2_{X/K}) & = 66A + 22B_{1, 3} + 18B_{2, 2} + 10C + 6D\\
\chi(X, \Omega^3_{X/K}) & = 26A + 12B_{1, 3} + 8B_{2, 2} + 6C + 4D\\
\chi(X, \Omega^4_{X/K}) & = A + B_{1, 3} + B_{2, 2} + C + D.
\end{align*}
Again, the sum of the squares of these terms is given by the polynomial $$5710A^2 + 4156AB_{1, 3} + 774B_{1, 3}^2 + 3212AB_{2, 2} + 1180B_{1, 3}B_{2, 2} + 454B_{2, 2}^2 + 1948AC$$ $$ + 732B_{1, 3}C + 556B_{2, 2}C + 174C^2 + 1212AD + 460B_{1, 3}D + 348B_{2, 2}D + 220CD + 70D^2$$ and the square of the sum is 
$$14400A^2 + 11520AB_{1, 3} + 2304B_{1, 3}^2 + 8640AB_{2, 2} + 3456B_{1, 3}B_{2, 2} + 1296B_{2, 2}^2 + 5760AC $$ $$+ 2304B_{1, 3}C + 1728B_{2, 2}C + 576C^2 + 3840AD + 1536B_{1, 3}D + 1152B_{2, 2}D + 768CD + 256D^2,$$ and so in particular the coefficients of the latter are at least double of the former. In particular, we have finished the final cases needed to prove Proposition \ref{prop: numerical condition}.
\section{Reduction to the Combinatorial Statement}\label{section: reduce to combinatorics}
Having computed the appropriate Euler characteristics $\chi(X, \Omega^i),$ we are now ready to reduce Theorem \ref{thm: main theorem} to Theorem \ref{thm: big monodromy statement intro version}. We do this in two steps. First, we reduce the proof of the Shafarevich conjecture to Theorem \ref{thm: big monodromy main theorem}, using the output of the Lawrence-Venkatesh method as executed in \cite{km24}, stated in sufficiently generality. We will then utilize the procedure, as in \cite{ls25}, to make the reduction to the combinatorial statement.
\subsection{The Lawrence-Venkatesh Output}
We start by proving Theorem \ref{thm: main theorem}, assuming Theorem \ref{thm: big monodromy main theorem}. The main tool that we use is \cite[Theorem D]{km24}.
\par We start by constructing an appropriate scheme which classifies the smooth complete intersections of a given type. To obtain this scheme, suppose we have ample Neron-Severi classes $\phi_1, \phi_2, \ldots, \phi_r$ for hypersurfaces of an abelian variety $A,$ for a number field $K.$ Fix a very ample line bundle $L$ of $A.$
\par First, we note that all smooth complete intersections given by the data of these Neron-Severi classes all have the same Hilbert polynomial, with respect to the line bundle $L.$ To check, recall the Hilbert polynomial is given by $P_X(n) = \chi(X, \mathcal{O}_{X} \otimes L|_X).$ But we know from Lemma \ref{lem: zeroth wedge characteristic} that this Euler characteristic only depends on the Neron-Severi classes of the $H_i$ and $H$ appearing. In particular, for the complete intersections that we are given, fixing the Neron-Severi classes also fixes the Hilbert polynomial $\chi(X, \mathcal{O}_{X} \otimes L|_X).$ Let this Hilbert polynomial be $P.$
\par Now, consider the Hilbert scheme over $\mathcal{O}_{K, S}$ parametrizing closed subschemes of $A$ with Hilbert polynomial $P;$ this is a projective scheme over $\mathcal{O}_{K, S}$ (see \cite[Theorem 3.2, p.221-12]{fga}). Let $\mathcal{X} \rightarrow \Hilb_A^P$ be the universal subscheme of $\Hilb_A^P \times A.$ Our strategy will be to prove the finiteness of integral points on an appropriate subscheme of this Hilbert scheme. Specifically, we are interested in the following locus:
\begin{lemma}\label{lem: the locus we want}
Let $P \in \mathbb{Q}[z].$ Consider the subset of $\Hilb_{A_K}^P$ consisting of the image of geometric points $\Spec \Omega \rightarrow \Hilb_{A_K}^P$ such that the corresponding closed subscheme $\mathcal{X}_{\Omega}$ is smooth, geometrically connected, nondivisible, not a product, with ample normal bundle, and numerically like the complete intersection of $H_1, H_2, \ldots, H_r,$ satisfying the numerical conditions. Then, this subset is constructible. 
\end{lemma}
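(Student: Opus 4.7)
The plan is to express the locus described in the lemma as a finite intersection of constructible subsets of $\Hilb_{A_K}^P$, each corresponding to one of the defining conditions: smoothness, ample normal bundle, geometric connectedness, nondivisibility, not being a product, being numerically like the complete intersection of $H_1, \ldots, H_r$, and the numerical condition of Proposition \ref{prop: numerical condition}. Since constructibility is preserved under finite intersections, the lemma will follow from exhibiting each piece as open, closed, clopen, or locally closed in $\Hilb_{A_K}^P$.

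The more routine conditions admit well-known formulations. The smooth locus of the universal family $\mathcal{X} \to \Hilb_{A_K}^P$ is open in $\mathcal{X}$, and its complement projects to a closed subset of $\Hilb_{A_K}^P$ via the proper projection, so the locus where $\mathcal{X}_s$ is smooth is open. On this open locus, the normal sheaf $\mathcal{N}_{\mathcal{X}/A \times \Hilb}$ is locally free, and ampleness of a vector bundle on the fibers of a proper flat morphism is an open condition. The standard constructibility of the connected-components function on fibers of a proper morphism handles geometric connectedness. Since $\Omega^i_{\mathcal{X}/\Hilb}$ is locally free on the smooth locus, the map $s \mapsto \chi(\mathcal{X}_s, \Omega^i_{\mathcal{X}_s})$ is locally constant, so matching these Euler characteristics to the values computed in Section \ref{section: cohomology computation} yields a clopen subset of the smooth locus; the inequality in Proposition \ref{prop: numerical condition} is then either automatically satisfied or likewise cut out as a clopen subset by the same reasoning.

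The two remaining conditions, nondivisibility and not being a product, require more care. I would realize the stabilizer as a closed subgroup scheme of $A \times \Hilb_{A_K}^P$ via its defining functor of points $\text{Stab}(T) = \{a \in A(T) : \mathcal{X}_T + a = \mathcal{X}_T\}$, and then observe that the condition of its being trivial (of length one at each geometric fiber) is constructible by upper semicontinuity of fiber dimension together with standard flatness arguments. For ``not being a product'', I would use that $A$ has only finitely many decompositions as a product of proper abelian subvarieties up to isomorphism, and for each such decomposition the locus of subschemes of $A$ that are products with respect to it is closed in $\Hilb_{A_K}^P$, being the image of a proper morphism from a suitable product of Hilbert schemes of the factors; the complement of the resulting finite union is open.

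The main obstacle is verifying the last two conditions, which demand careful identification of the stabilizer as a scheme-theoretic object over the Hilbert scheme and a clean enumeration of product decompositions of $A$. Once these scheme-theoretic setups are in place, intersecting the finitely many constructible subsets described above yields the desired conclusion.
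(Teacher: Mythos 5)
Your proposal attempts a from-scratch proof of all the conditions, whereas the paper simply cites \cite[Lemma 2.1]{km24} for the base constructibility (smoothness, geometric connectedness, nondivisibility, not a product, ample normal bundle, the numerical inequality) and then observes that the additional ``numerically like'' condition is clopen on the smooth locus because Euler characteristics are locally constant in flat families. So the intended content of the lemma is quite short; everything except the last condition is deferred to \cite{km24}.

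Most of the conditions you treat directly are handled correctly in outline (smoothness, ample normal bundle, geometric connectedness, local constancy of $\chi(\Omega^i)$), and your treatment of the ``numerically like'' clause is exactly the observation the paper makes. However, your treatment of the ``not a product'' condition contains a genuine error. In this paper and in \cite{jklm23}, ``$X$ is a product'' means that there exist positive-dimensional smooth $X_1, X_2 \subset A$ such that the sum morphism $\sigma: A \times A \to A$ restricts to an isomorphism $X_1 \times X_2 \simeq X$; it is \emph{not} the statement that $X$ decomposes according to a direct-product decomposition of $A$ itself. Your plan enumerates ``decompositions of $A$ as a product of proper abelian subvarieties,'' which is both the wrong notion and, independently, not a finite set in general (for instance, $E \times E$ for an elliptic curve $E$ already has infinitely many such decompositions, even up to isomorphism). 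The correct route is closer to the one you gesture at in the second half of that sentence: parametrize pairs $(X_1, X_2)$ by Hilbert schemes of $A$, restrict to the locus where $\sigma|_{X_1 \times X_2}$ is a closed immersion with the target Hilbert polynomial $P$ (noting that only finitely many Hilbert polynomials for $X_1, X_2$ can arise), and take the image under the proper map $(X_1, X_2) \mapsto X_1 + X_2$ into $\Hilb_{A_K}^P$; the complement of this closed image is the desired locus. Your handling of nondivisibility is plausible but would similarly need care to verify that the stabilizer subgroup scheme can be realized over $\Hilb_{A_K}^P$ and that triviality of geometric fibers is constructible (the stabilizer always contains the identity, so one needs the locus where the fiber has length one, which requires some flattening stratification). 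Given that the paper outsources all of this to \cite[Lemma 2.1]{km24}, the more economical route would be to do the same rather than reconstruct it.
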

\par This is essentially \cite[Lemma 2.1]{km24}, except with the additional ``numerically like" condition. But this follows since applying this condition corresponds to taking appopriate connected components, as Euler characteristics are locally constant in flat families.
\par In particular, we can consider the corresponding subscheme, with the reduced structure, of this subset; let this subscheme be $\Hilb_{A_K}^{P, *},$ with $\mathcal{X}^* \rightarrow \Hilb_{A_K}^{P, *}$ the pullback of the universal subscheme to this constructible subset.
\par We now apply \cite[Theorem D]{km24}. First, note that we obtain the big monodromy condition by starting with the big Tannaka assumption given by Theorem \ref{thm: big monodromy main theorem}, using \cite[Theorem 4.10]{jklm23} and \cite[Theorem 6.1]{jklm23} to convert this statement to the necessary big monodromy condition. Next, we proceed as in the proof of \cite[Theorem 2.3]{km24}. A direct application of the proof there yields that there are only finitely many isomorphism classes of subvarieties of $A,$ with the given Hilbert polynomial $P,$ and which are smooth, nondivisible, with ample normal bundle, satisfying the numerical condition, geometrically connected, numerically like the appropriate complete intersection of hypersurfaces representing $d_i\phi,$ and with good reduction outside of $S.$ But from our work above, we see that the complete intersection of the $d_i \phi$ has ample normal bundle and satisfies the numerical condition. We thus obtain Theorem \ref{thm: main theorem}.
\par As such, in order to prove our main theorem, it suffices to prove Theorem \ref{thm: big monodromy main theorem}. 
\subsection{The Numerical Approach to Monodromy}\label{subsection: big monodromy}
We now reduce Theorem \ref{thm: big monodromy main theorem} to Theorem \ref{thm: big monodromy statement intro version}.
\par We start with the following theorem, combining two theorems from \cite{jklm23}.
\begin{thm}[Theorem A, Corollary 5.15 from \cite{jklm23}]\label{citedthm: simplicity of the Tannaka group}
Suppose that $X \subset A$ has ample normal bundle and is nondivisible. Then, if $\dim A \geq 3,$ the following are equivalent:
\begin{enumerate}
    \item $G_{X, \omega}^*$ is not simple,
    \item There are positive-dimensional smooth subvarieties $X_1, X_2 \subset A$ such that the sum morphism $\sigma: A \times A \rightarrow A$ induces an isomorphism $X_1 \times X_2 \simeq X.$ 
\end{enumerate}
Furthermore, if $G_{X, \omega}^*$ is simple, then the representation $\omega(\delta_X)$ is miniscule.
\end{thm}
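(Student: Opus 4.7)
The plan is to treat the equivalence of $(1)$ and $(2)$ and the miniscule conclusion as separate assertions, since the former is a structural statement about the Tannakian category while the latter is a representation-theoretic consequence of simplicity. The central technical tool throughout would be the dictionary furnished by the convolution Tannakian formalism of \cite{kram2015}: if $Y_1, Y_2 \subset A$ are closed subvarieties and the sum morphism $\sigma$ induces an isomorphism $Y_1 \times Y_2 \simeq X$, then $\delta_X$ is isomorphic to the convolution $\delta_{Y_1} * \delta_{Y_2}$ in the quotient category on which the Tannakian structure is defined.

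For the direction $(2) \Rightarrow (1)$, I would use the convolution identity above to realize $\omega(\delta_X)$ as a tensor product $\omega(\delta_{X_1}) \otimes \omega(\delta_{X_2})$, where each tensor factor has dimension at least two because the $X_i$ are positive-dimensional with ample normal bundle. The group $G^*_{X,\omega}$ then sits inside the product of the two corresponding derived Tannaka groups and projects nontrivially to each factor, so its identity component splits (up to isogeny) as a nontrivial almost-direct product and cannot be simple.

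The main obstacle is the reverse direction $(1) \Rightarrow (2)$: producing genuine geometric subvarieties of $A$ from a purely algebraic decomposition of a reductive group. Starting from a non-trivial factorization of $G^*_{X,\omega}$ up to isogeny as $H_1 \times H_2$, one obtains a tensor decomposition $\omega(\delta_X) \simeq V_1 \otimes V_2$ of the associated representations. By the Tannakian formalism this lifts to an isomorphism $\delta_X \simeq K_1 * K_2$ inside the convolution category for some simple perverse sheaves $K_i$. The real work is then to show that each $K_i$ is, up to shift and negligible summands, the intersection complex of an honest smooth subvariety $X_i \subset A$, and that the induced morphism $X_1 \times X_2 \to X$ is actually an isomorphism. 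Here one exploits the ample normal bundle hypothesis and the assumption $\dim A \geq 3$ to control the supports of the $K_i$ and to rule out degenerate cases in the convolution, while the nondivisibility assumption is what prevents parasitic decompositions obtained by translating the $X_i$ along torsion subgroups or nontrivial abelian subvarieties.

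For the final assertion, assuming $G^*_{X,\omega}$ is simple, I would appeal to the classification of simple pairs $(G, V)$ arising as Tannaka data of intersection complexes of smooth subvarieties, as collected in \cite[Theorem 6.1]{jklm23}. The key input is that Verdier duality supplies a self-duality on $V = \omega(\delta_X)$ and forces the unit object to appear as a constituent of $V \otimes V^{\vee}$; combined with the constraint that $V$ must be the stalk of an intersection complex of dimension equal to $\dim X$, this rules out non-miniscule highest weights and leaves only miniscule representations. This final step is mostly representation-theoretic bookkeeping once the geometric content of the equivalence is in place.
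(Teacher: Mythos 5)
This statement is imported verbatim from \cite{jklm23} (their Theorem A together with Corollary 5.15), and the present paper gives no proof of it whatsoever: it is treated as a black box. So there is no ``paper's own proof'' against which to compare your sketch; the only honest comparison is against the cited source.

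On the merits of your sketch as a reconstruction of \cite{jklm23}: the direction $(2) \Rightarrow (1)$ is essentially right in outline, resting on the identity $\delta_X \simeq \delta_{X_1} * \delta_{X_2}$ in the convolution category and the induced tensor decomposition of $\omega(\delta_X)$. But the genuine content of the theorem lies in $(1) \Rightarrow (2)$ and in the minuscule assertion, and in both places your sketch stops short of an argument. For $(1) \Rightarrow (2)$, passing from an abstract tensor decomposition $V \simeq V_1 \otimes V_2$ of the Tannaka representation to honest smooth subvarieties $X_1, X_2 \subset A$ with $\sigma\colon X_1 \times X_2 \xrightarrow{\sim} X$ is the entire difficulty; ``one exploits the ample normal bundle hypothesis'' is a placeholder, not a proof, and nothing in your write-up explains how a simple constituent of the convolution category comes from a subvariety at all. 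For the minuscule claim, your argument actually proves nothing: Verdier self-duality of $V$ and the appearance of the unit inside $V \otimes V^\vee$ hold for \emph{every} irreducible self-dual representation and do not single out minuscule weights (e.g.\ the adjoint representation of a simple group satisfies both). Moreover, invoking \cite[Theorem 6.1]{jklm23} at this point is almost certainly circular, since that classification is derived \emph{from} the minuscule statement rather than the other way around. The genuine proof of minuscule-ness in the literature goes through either the Hodge-theoretic weight filtration on $\omega(\delta_X)$ or character-theoretic constraints coming from Euler characteristics of convolutions, neither of which appears in your sketch.

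In short: the proposal correctly identifies the two non-obvious halves of the theorem but supplies an argument for neither, and the minuscule step as written is both circular and logically insufficient. Since the paper itself simply cites \cite{jklm23}, the appropriate move here is to do the same, rather than reprove the result.
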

Therefore, it follows that the pair of the image of $G_{X, \omega}^*$ in $\gl(\omega(\delta_X))$ and $\omega(\delta_X)$ must be one of the following pairs (see, for instance, \cite[Theorem 5.1.5]{Green_2013}):
\begin{enumerate}
    \item $\sll_n, \spp_n, \so_n$ acting on the standard representation,
    \item $\sll_n,$ acting on the $k$th wedge power of the standard representation for $2 \leq k \leq n-2,$
    \item $\text{Spin}_n,$ acting on either a spin representation or a half-spin representation (depending on the parity of $n$),
    \item $E_6$ or $E_7,$ acting on one of its lowest-dimensional irreducible representations.
\end{enumerate}
\par First, we remark that complete intersections of hypersurfaces cannot be expressed as products of subvarieties of $A.$ This is argued in \cite[Remark 6.3]{jklm23}.
\par Hence, to prove Theorem \ref{thm: big monodromy main theorem}, we need to show that only the first case occurs. The case that poses the most difficulty is the second one, taking $2 \leq k \leq n-2.$ The other two can be immediately ruled out simply by analyzing the dimension of our representation.
\begin{prop}\label{prop: all but the wedge representations ruled out}
Suppose that $X$ is numerically like a complete intersection of ample hypersurfaces $H_1, H_2, \ldots, H_r,$ and $\dim X > \frac{\dim A}{2}.$ Then, we cannot be in the third or fourth cases.
\end{prop}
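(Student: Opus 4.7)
The plan is a direct dimension count on the representation $V := \omega(\delta_{\mathcal{X}_{\overline{\eta}}})$, which for smooth $X$ has $\dim V = |\chi_{\mathrm{top}}(X)|$. First I would simplify this Euler characteristic using Proposition \ref{prop: expression for cohomology in terms of Eulerian numbers}: since $(-1)^{n-q-r}\chi(X, \Omega^q_{X/K}) \geq 0$, we have $|\chi_{\mathrm{top}}(X)| = \sum_q |\chi(X, \Omega^q_{X/K})|$, and the identities $\sum_k E(m,k) = m!$ for $m \geq 1$ together with the convention $E(0,k) = 0$ collapse this to
\[ \dim V \;=\; \sum_{\substack{a_1,\ldots,a_r \geq 1 \\ a_1+\cdots+a_r = n}} H_1^{a_1} H_2^{a_2} \cdots H_r^{a_r}, \]
which can also be verified directly from the Chern-class computation $\chi_{\mathrm{top}}(X) = \int_A \prod_i H_i/(1+H_i)$ using $c(T_X) = \prod_i (1+H_i)^{-1}|_X$.

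Next I would lower-bound $\dim V$ using the Khovanskii--Teissier inequality: for each composition $(a_1,\ldots,a_r)$ we have $H_1^{a_1}\cdots H_r^{a_r} \geq \prod_i (H_i^n)^{a_i/n}$, and on an abelian $n$-fold each $H_i^n = n!\cdot\chi(A,\mathcal{O}(H_i)) \geq n!$, so every mixed intersection is at least $n!$. Since there are $\binom{n-1}{r-1}$ positive compositions of $n$ into $r$ parts, this gives $\dim V \geq n!\,\binom{n-1}{r-1}$.

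Finally I would rule out cases 3 and 4 using this bound together with divisibility. The dimensions that must occur are $27, 56$ (for $E_6, E_7$) or $2^k$ with $k \geq 3$ (for the (half-)spin representations of $\mathrm{Spin}_m$). The hypothesis $n > 2r$ forces $n \geq 3$ when $r = 1$ and $n \geq 5$ when $r \geq 2$. When $r = 1$, $\dim V = H^n$ is divisible by $n! \geq 6$, hence by $3$, ruling out $27, 56,$ and every power of $2$ in one stroke. When $r \geq 2$, the magnitude bound $n!\,\binom{n-1}{r-1} \geq 480$ already disposes of $E_6, E_7$; to rule out the Spin case I would apply the inclusion-exclusion identity
\[ \sum_{\substack{|a|=n \\ a_i \geq 1}} \binom{n}{a_1,\ldots,a_r}\prod_i H_i^{a_i} \;=\; \sum_{\emptyset \neq S \subseteq \{1,\ldots,r\}} (-1)^{r-|S|}\Bigl(\sum_{i \in S} H_i\Bigr)^n, \]
noting that each term on the right is divisible by $n!$ by Riemann--Roch on $A$, and combine this with the bound above to extract a small odd prime factor from $\dim V$, preventing it from being a pure power of $2$.

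The main obstacle is the Spin case when $r \geq 2$: although the magnitude bound is comfortably large, ruling out a pure power of $2$ requires exploiting the multinomial-weighted divisibility relation above to squeeze out an odd prime factor from the unweighted sum $\sum H_1^{a_1}\cdots H_r^{a_r}$, which is the most delicate part. The $r = 1$ case and the $E_6, E_7$ case are both immediate from divisibility by $n!$ and the magnitude bound respectively.
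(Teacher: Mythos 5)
Your reduction of $\dim V$ to the unweighted sum $\sum_{a_i\ge 1,\ \sum a_i=n} H_1^{a_1}\cdots H_r^{a_r}$ matches the paper, your $r=1$ case is fine, and the magnitude bound $\dim V \ge n!\binom{n-1}{r-1}$ via Khovanskii--Teissier and Riemann--Roch is a correct (and valid) way to kill the $27$- and $56$-dimensional cases. But the Spin case for $r\ge 2$ --- the only case a size bound cannot touch, since $2^k$ is unbounded --- is left as a plan rather than a proof, and the plan as stated does not work. Your inclusion--exclusion identity controls the \emph{multinomial-weighted} sum $\sum_a \binom{n}{a_1,\dots,a_r}\prod_i H_i^{a_i}$, and the conclusion you can draw from Riemann--Roch is that this weighted sum is divisible by $n!$; that says nothing about the residue modulo $3$ (or any odd prime) of the \emph{unweighted} sum that equals $\dim V$, because the weights $\binom{n}{a}$ vary with $a$ and are themselves frequently divisible by $3$, so no congruence for $\dim V$ can be extracted term-by-term or in aggregate. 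You flag this yourself as ``the most delicate part,'' and indeed it is exactly the step that is missing: as written, the proposal does not rule out $\dim V$ being a power of $2$.

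The ingredient the paper uses instead is a divisibility statement for each \emph{individual} intersection number, special to abelian varieties: since $H^{2e}(A,\mathbb{Z})=\bigwedge^{2e}H^1(A,\mathbb{Z})$ and intersection corresponds to the wedge product, the class $H_i^{e(i)}$ is $e(i)!$ times an integral class (the divided-power phenomenon for powers of a $2$-form), so every term $H_1^{e(1)}\cdots H_r^{e(r)}$ is divisible by $e(i)!$ for each $i$. The hypothesis $\dim X>\dim A/2$ gives $n>2r$, so in every composition some $e(i)\ge 3$, hence every term --- and therefore $\dim V$ itself --- is divisible by $6$. This simultaneously excludes $27$, $56$, and all powers of $2$, with no need for the magnitude bound or the weighted identity. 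If you replace your inclusion--exclusion step with this per-term divisibility (which is a strictly stronger input than $n!\mid H_i^n$), your argument closes; without it, the Spin case remains a genuine gap.
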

\begin{proof}
We first recall from the classification of the third and fourth cases that the dimensions of the representations are either a power of $2,$ $27,$ or $56.$
\par We recall that the dimension of $\omega(\delta_X)$ is the Euler characteristic of $X,$ which in turn is given by \[\sum\limits_{i=0}^{\dim X} (-1)^i\chi(X, \Omega^i_{X/K}).\] But by using \cite[Lemma 2.14]{jklm23} (or by using Theorem \ref{prop: alternating summation formula for Eulerian numbers} and summing these up), we see that the absolute value of the Euler characteristic of $X$ is \[\sum\limits_{\substack{e \in F(r, n), |e| = n\\ \forall i, \space e(i) \geq 1}} H_1^{e(1)} H_2^{e(2)} \cdots H_r^{e(r)}.\] 
\par However, observe that $H_i^{e(i)}$ is divisible by $e(i)!$ in the Chow group (for instance, viewing $H_i^{e(i)} \in H^{2e(i)}(A, \mathbb{Z}) = \bigwedge^{2e(i)} H^1(A, \mathbb{Z})$ and recalling that the intersection product corresponds to wedge products). Therefore, in particular, since $r < \frac{n}{2},$ we see that some $e(i)$ is at least $3$ for each term. But this means that every term in this summation is divisible by $6,$ and thus $\chi_{\text{top}}(X)$ is divisible by $6.$ In particular, it cannot be a power of $2,$ $27,$ or $56,$ which is what we wanted to check.
\end{proof}
Furthermore, once we rule out the wedge product of the standard representation, we note that the appropriate cases for the standard representation occur in the situations required. This follows by an argument analogous to \cite[Lemma 3.11]{ls25}; the group $G_{X, \omega}^*$ being either $\spp_n$ or $\so_n$ implies the existence of an appropriate bilinear form, which via the perverse sheaf formalism translates to the appropriate symmetry conditions on $X.$
\par Hence, to prove that we have big monodromy for most torsion tuples, we need to show that the wedge powers do not occur. To do this, we have the following analog of \cite[Lemmas 3.13, 3.14]{ls25}.
\begin{lemma}\label{lem: numerical approach foundation}
Suppose $X$ is smooth and nondivisible, and numerically like a complete intersection of hypersurfaces in an abelian variety $A,$ with $n = \dim A$ and $r$ the codimension of $X$ in $A.$ Suppose that $G_{X, \omega}^*, \omega(\delta_X)$ is given by the second case in the classification above, as the $k$th wedge power of the standard representation of $\sll_m.$ Then, there exists a function $m_H: \mathbb{Z} \rightarrow \mathbb{N}$ and an integer $s$ such that $\sum\limits_{i} m_H(i) = m$ and, for all $q \in \mathbb{Z},$
\[\sum\limits_{\substack{m_S: \mathbb{Z} \rightarrow \mathbb{Z} \\ 0 \leq m_S(i) \leq m_H(i) \\ \sum m_S(i) = k \\ \sum im_S(i) = s + q}} \prod\limits \binom{m_H(i)}{m_S(i)} = |\chi(X, \Omega^q_{X/K})|.\]
\end{lemma}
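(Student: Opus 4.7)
The approach is to adapt the Hodge-theoretic argument of \cite[Lemma 3.13]{ls25}. The key point is that the Tannakian category generated by $\delta_X$ under convolution admits a natural enrichment by which $\omega(\delta_X)$ carries a Hodge-type filtration whose graded dimensions are $|\chi(X, \Omega^q_{X/K})|$, and this filtration is preserved by $G^*_{X,\omega}$.

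The plan proceeds as follows. First, I would invoke the fact that $\delta_X = \text{IC}(X)$ underlies a pure Hodge module (in the sense of Saito) and that convolution along the sum morphism $\sigma: A \times A \to A$ preserves the structure of (shifted) Hodge modules, so that the Tannakian subcategory generated by $\delta_X$ lifts to a Tannakian subcategory enriched in Hodge structures. Since $X$ is smooth, $\delta_X$ is (a shift of) $\mathbb{Q}_X$, and combining this with the computation of Section \ref{section: cohomology computation} (especially Proposition \ref{prop: expression for cohomology in terms of Eulerian numbers}) shows that the graded pieces of the Hodge filtration on $\omega(\delta_X)$ have dimensions $|\chi(X, \Omega^q_{X/K})|$, up to an overall shift.

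Second, the Hodge filtration is determined by a cocharacter $\mu: \mathbb{G}_m \to \gl(\omega(\delta_X))$, which by functoriality of the Hodge enrichment factors, up to a central twist, through $G_{X,\omega}$. Under the isomorphism $\omega(\delta_X) \cong \bigwedge^k W$ with $G^*_{X,\omega} \cong \sll_m$ acting on $W = k^m$ through the $k$-th wedge of the standard representation, we lift $\mu$ to a cocharacter $\tilde\mu: \mathbb{G}_m \to \gl_m$, where the central contribution produces the integer shift $s$ in the statement. The cocharacter $\tilde\mu$ then gives a grading $W = \bigoplus_i W^i$, and we set $m_H(i) = \dim W^i$, so that $\sum_i m_H(i) = m$. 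The induced grading on $\bigwedge^k W$ has graded piece at weight $s + q$ equal to
\[
\bigl(\bigwedge^k W\bigr)^{(s+q)} = \bigoplus_{\substack{m_S: \mathbb{Z} \to \mathbb{Z} \\ 0 \leq m_S(i) \leq m_H(i) \\ \sum m_S(i) = k \\ \sum i m_S(i) = s + q}} \bigotimes_i \bigwedge^{m_S(i)} W^i,
\]
whose dimension is precisely $\sum \prod_i \binom{m_H(i)}{m_S(i)}$. Equating this with $|\chi(X, \Omega^q_{X/K})|$ on both sides yields the claimed identity.

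The principal obstacle is establishing the Hodge-theoretic enrichment rigorously and identifying the graded dimensions of $\omega(\delta_X)$ with $|\chi(X, \Omega^q_{X/K})|$. Over $\mathbb{C}$, this follows from Saito's theory of mixed Hodge modules together with the compatibility of convolution with the Hodge structure (as used in \cite{jklm23,ls25}); for the general algebraically closed base field $k$ of Theorem \ref{thm: big monodromy main theorem}, one reduces to $\mathbb{C}$ by standard specialization arguments, which is harmless because $G^*_{X,\omega}$ and its representation on $\omega(\delta_X)$ are preserved under such reduction. Once this structural input is in place, the conclusion is purely the formal combinatorics of wedge powers of graded vector spaces.
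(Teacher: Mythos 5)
Your proposal captures the essential architecture of the argument and reaches the same conclusion as the paper, which simply cites \cite[Lemmas 3.13, 3.14]{ls25} and replaces the hypersurface with $X.$ The final combinatorial step --- a cocharacter grading on the standard representation $W$ of $\sll_m$, and then $\dim \bigl(\bigwedge^k W\bigr)^{(s+q)} = \sum \prod_i \binom{m_H(i)}{m_S(i)}$ --- is exactly what is done in [ls25], and you reproduce it correctly.

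The one place where you diverge from the paper's route is the technical input establishing the existence of the Hodge cocharacter compatible with the Tannaka group. You propose to use Saito's mixed Hodge modules over $\mathbb{C},$ showing that the convolution formalism preserves Hodge module structure and extracting the cocharacter from the resulting enriched Tannakian category. The paper, following \cite{ls25}, instead invokes the $p$-adic comparison theorems: since $X$ is smooth projective over the number field $K,$ the crystalline comparison identifies the Hodge filtration on de Rham cohomology with a filtration on $p$-adic \'etale cohomology, and the Frobenius-compatibility places the resulting cocharacter inside (a central extension of) $G_{X,\omega}.$ Both are legitimate approaches; the $p$-adic route is perhaps closer to the Lawrence--Venkatesh machinery that is already in play for the Shafarevich application, and avoids the specialization-to-$\mathbb{C}$ argument you allude to (which, as stated, goes in the wrong direction --- the lemma is used with $X$ over a number field, not over an arbitrary algebraically closed $k$). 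Your alternative via Hodge modules would also work but carries the additional burden of verifying that the convolution quotient of Krämer--Weissauer lifts to Hodge modules in a way compatible with the fiber functor; this is plausible (and used implicitly in \cite{jklm23}) but is not trivial.

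One sentence in your first paragraph is imprecise and, read literally, false: ``this filtration is preserved by $G^*_{X,\omega}$.'' A filtration preserved by all of $G^*_{X,\omega}$ would force $G^*_{X,\omega}$ into a parabolic, which contradicts the group being $\sll_m,$ $\spp_m,$ or $\so_m.$ What you actually want, and what you correctly state in the second paragraph, is that the filtration is \emph{split} by a cocharacter that lies in $G_{X,\omega}$ up to a central $\mathbb{G}_m$-twist. You should also make explicit the input identifying the graded dimensions of the Hodge filtration on $\omega(\delta_X)$ with $|\chi(X, \Omega^q_{X/K})|$: this relies on $\omega(\delta_X)$ being (a twist of) the primitive middle cohomology of $X,$ with vanishing of the off-middle ``new'' cohomology --- an Artin-vanishing / weak Lefschetz phenomenon for subvarieties of abelian varieties with ample normal bundle --- and is not a formal consequence of Proposition \ref{prop: expression for cohomology in terms of Eulerian numbers}, which is purely a Riemann--Roch computation.
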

The proof of this lemma follows exactly as in \cite{ls25}, except we replace our hypersurface with $X,$ and replacing $|\chi(H, \Omega^q_{H/K})|$ with $|\chi(X, \Omega^q_{X/K})|.$ Indeed, the only things we need are the Tannakian formalism applied to $\langle \delta_X \rangle$ (which we have outlined above), and the comparison theorems from $p$-adic Hodge theory (which we can use as $X$ is smooth and projective over $K$).
\par Using this lemma, we now spend the remainder of the paper showing this equation has no solutions in the situations that we need for our theorem.
\section{Asymptotic Proof of Big Monodromy}\label{section: big monodromy asymptotics}
Our goal is now to prove the following.
\begin{thm}\label{thm: big monodromy statement}
Suppose that $X$ is numerically like the complete intersection of $r$ hypersurfaces in $A,$ all representing multiples of the same ample Neron-Severi class $H,$ namely by $d_1H, d_2H, \ldots, d_rH.$ Then, if $\dim A = n \geq 10r^4 + 1000,$ there is no solution in the functions $m_H(w),$ positive integers $k$ between $2$ and $m - 1$ (where $m = \sum\limits_i m_H(i)$), and integer $s,$ to the following system of equations indexed by integers $q$ between $0$ and $n-r,$ respectively:
\[\sum\limits_{\substack{m_S: \mathbb{Z} \rightarrow \mathbb{Z} \\ 0 \leq m_S(i) \leq m_H(i) \\ \sum m_S(i) = k \\ \sum im_S(i) = s + q}} \prod\limits \binom{m_H(i)}{m_S(i)} = |\chi(X, \Omega^q_{X/K})|.\]
\par Furthermore, if the $d_i$ are all equal, there is no solution for $n \geq 10r^2 + 1000.$
\end{thm}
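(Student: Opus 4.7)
The plan is to adapt the asymptotic argument from \cite[Appendix B]{ls25} to the complete intersection setting, feeding in the Euler characteristic computations of Section \ref{section: cohomology computation} as numerical input.

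The first step is to encode the system generating-functionally. Setting $Q(y,z) = \prod_i (1+yz^i)^{m_H(i)},$ the $q$-th equation reads $[y^k z^{s+q}]\, Q(y, z) = |\chi(X, \Omega^q_{X/K})|$ for $q = 0, 1, \ldots, n-r.$ Because the Euler characteristics grow rapidly in $q$ while only $n-r+1$ equations pin them down, the support of $m_H$ must lie in a narrow window and its total mass $m = \sum_i m_H(i)$ must be of order $n.$ The combinatorial data $m_S$ contributing to the leading (small $q$) equations can then be read off explicitly, yielding a baseline of trivial contributions to subtract.

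The second step is the polynomial reduction. After dividing the $q$-th equation by the $q = 0$ equation, the normalized left-hand side can be rewritten as a polynomial of degree at most $q$ in a single variable $u,$ where $u$ morally represents $|\chi(X, \Omega^1_{X/K})|/|\chi(X, \mathcal{O}_X)|.$ This produces, for each $q \leq n-r,$ the dichotomy that $|\chi(X, \Omega^q_{X/K})|/|\chi(X, \mathcal{O}_X)|$ must either be within a bounded multiplicative factor of $u^q,$ or expressible as a sum of secondary lower-order contributions built from the $m_S$ values for indices strictly less than $q.$

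The third step contradicts the dichotomy using the Eulerian asymptotics. When all $d_i$ are equal, Proposition \ref{prop: expression for cohomology in terms of Eulerian numbers} together with Lemma \ref{lem: generalized Eulerian in terms of Eulerian} yields $|\chi(X, \Omega^q_{X/K})| = (H^n)/n! \cdot E_r(n, q),$ and Lemmas \ref{lem: Eulerian asymptotic} and \ref{lem: more useable asymptotic} show that the ratio $|\chi(X, \Omega^q_{X/K})|/|\chi(X, \mathcal{O}_X)|$ is comparable to $\binom{q+r-1}{r-1}\,((q+r)/r)^n.$ This grows strictly slower than $u^q \sim r^q\,((r+1)/r)^{nq}$ but strictly faster than any admissible sum of secondary terms, once $n$ is sufficiently large compared to $r,$ ruling out both horns. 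In the general $d_i$ case, the Euler characteristic is a weighted sum over $\epsilon \in F(r, n)$ of products of Eulerian numbers, which requires bounding the maximum of such a weighted sum and effectively squares the loss factor, accounting for the $r^4$ rather than $r^2$ hypothesis.

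The main obstacle is making step three quantitatively uniform in $q$ over the full range $0 \leq q \leq n-r,$ so that neither horn of the dichotomy survives. The applicability of Lemma \ref{lem: more useable asymptotic} is governed by the constraints $n \geq 3r^2 + 100,$ $k \leq 4r,$ and $k \leq n/(\ln(n+1)+1) - r,$ and these constraints propagate to the explicit constants $10r^4 + 1000$ and $10r^2 + 1000$ in the theorem's hypothesis. The detailed combinatorial inequalities for generalized Eulerian numbers needed to execute this step are relegated to the appendix, as indicated in the introduction.
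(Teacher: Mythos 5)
Your overall strategy is the one the paper follows (adapting \cite[Appendix B]{ls25}, dividing by the $q=0$ equation, and contradicting the resulting size constraints with the generalized Eulerian asymptotics of Lemmas \ref{lem: Eulerian asymptotic} and \ref{lem: more useable asymptotic}), but the middle of your sketch contains a genuine gap as stated. After dividing by the $q=0$ equation, the left-hand side is \emph{not} a polynomial in a single quantity $u \approx |\chi(X,\Omega^1_{X/K})|/|\chi(X,\mathcal{O}_X)|$: it is a sum of terms involving many independent unknowns ($m_H(w\pm 1), m_H(w\pm 2), \ldots$, $m_0(w)$, $m_H(w)$, and the analogous data at $w'$), and no single substitution turns the $q$-th equation into a degree-$q$ polynomial identity in $u$. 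The actual argument has to control these unknowns separately: first the cutoff structure of $m_0, m_{\max}$ (Lemmas \ref{lem: w and w' cutoffs}, \ref{lem: basic summation for mH}), then the lower bounds $m_0(w)\geq 2$ and $m_H(w)-m_0(w)\geq 2$ (Lemma \ref{lem: m0 not 1.}, which itself needs the auxiliary equations at $q=s$ and $q=3s-2$ and the appendix inequalities), then a case split on $w\neq w'$ versus $w=w'$ and on which of the two terms in the $q=1$ ratio dominates, with cross-references to the $q=2,3,5$ equations to bound $m_H(w\pm1), m_H(w\pm2)$. The contradiction ultimately comes from a handful of small-$q$ inequalities — chiefly $\tfrac{1}{48}\bigl(\chi(X,\Omega^1_{X/K})/\chi(X,\mathcal{O}_X)\bigr)^2 > |\chi(X,\Omega^2_{X/K})/\chi(X,\mathcal{O}_X)|$ (Lemma \ref{lem: the core q=2 inequality}) and Lemma \ref{lem: remaining terms in q=2} — not from a dichotomy made uniform over all $0\leq q\leq n-r$, so the obstacle you single out is not where the difficulty actually lies.

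Two of your supporting claims are also off. The total mass $m=\sum_i m_H(i)$ is not of order $n$: since $\binom{m_H(w)}{m_0(w)} = |\chi(X,\mathcal{O}_X)|$, the value $m_H(w)$ is typically enormous; what Lemma \ref{lem: basic summation for mH} bounds is only the weighted mass of $m_H$ away from the cutoffs $w, w'$, together with $k(w'-w)$. And the source of the $10r^4$ versus $10r^2$ hypothesis is not a ``squared loss factor'' from maximizing a weighted sum of Eulerian products; it comes from the general-$d_i$ lower bounds for $|\chi(X,\Omega^1_{X/K})|$ in Lemmas \ref{lem: the core q=2 inequality} and \ref{lem: remaining terms in q=2}, where bounding $\prod_i 1/e(i)$ below by $(r/n)^r$ costs polynomial factors $n^{r}$--$n^{3r}$ that must be absorbed by exponential terms of size roughly $2^{n/r^2}$, forcing $n \gtrsim r^3\log n$, hence the $r^4$ threshold. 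To complete your outline you would need to supply the case analysis and the explicit appendix-level inequalities; the asymptotic heuristic in your third step is consistent with them but does not by itself rule out the configurations with $m_H(w-1)=1$ or $m_0(w)=1$ that the paper treats separately.
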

In particular, this is enough to prove that we have big monodromy in the specified ranges (subject to the assumptions specified earlier), and hence prove the Shafarevich conjecture for complete intersections of $r$ hypersurfaces in those cases. We also know that the above theorem holds true by \cite[Proposition B.1]{ls25} in the case $r = 1,$ so we may assume that $r \geq 2.$
\par We proceed similarly as in \cite[Appendix B]{ls25}; we will first make some preliminary descriptions on the form that $m_H$ takes (as well as the form that $m_S$ takes for certain values of $q$). Our approach, in broad strokes, will then be to show there is no solution via asymptotics, by arguing that on the left-hand side, as $n$ grows, either a given term is too big, or it is zero and the remaining terms are too small. 
\par From here on out, assume for the sake of contradiction that such a positive integer $k,$ integer $s,$ and function $m_H: \mathbb{Z} \rightarrow \mathbb{Z}$ exists.
\subsection{Preliminary Control of $m_H$}
We begin the proof with a few preliminary estimates. First are a few lemmas from \cite{ls25} which we will find useful to rewrite some of the equations in the above theorem.
\par Let $m_0, m_{\text{max}}$ be the functions satisfying $\sum m_S(i) = k$ and $0 \leq m_S(i) \leq m_H(i)$ which minimize and maximize $\sum im_S(i),$ respectively. In general, we will let $[x]$ be the function which is $1$ on $x$ and zero everywhere else. So, for instance, two of the functions for $q = 1$ are $m_0 + [w] - [w-1]$ and $m_0 + [w+1] - [w].$
\begin{lemma}[Lemma B.2 from \cite{ls25}]\label{lem: w and w' cutoffs}
There exists a unique integer $w$ such that $m_0(i) = m_H(i)$ for $i < w,$ $m_0(i) = 0$ for $i > w,$ and $m_0(w) > 0.$ Furthermore, there exists a unique integer $w'$ such that $m_{\text{max}}(i) = m_H(i)$ for $i > w',$ $m_{\text{max}}(i) = 0$ for $i < w',$ and $m_{\text{max}}(w') > 0.$ Furthermore, $w \leq w'.$
\end{lemma}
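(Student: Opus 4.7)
The plan is to use an exchange argument to characterize $m_0$ and $m_{\text{max}}$, then deduce $w \leq w'$ via a counting estimate after reducing to the case $k \leq m/2$ by symmetry of the system. For the structure of $m_0$, fix any minimizer and suppose for contradiction that there exist indices $i < j$ with $m_0(i) < m_H(i)$ and $m_0(j) > 0$. Then the function $m_0' := m_0 + [i] - [j]$ is feasible (pointwise between $0$ and $m_H$, with the same total $k$) but satisfies $\sum_l l \cdot m_0'(l) = \sum_l l \cdot m_0(l) - (j - i) < \sum_l l \cdot m_0(l)$, contradicting minimality. Hence for every pair $i < j$ we must have $m_0(i) = m_H(i)$ or $m_0(j) = 0$. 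Letting $w$ be the largest integer with $m_0(w) > 0$ (which exists since $k \geq 1$), this dichotomy forces $m_0(i) = m_H(i)$ for all $i < w$ and $m_0(i) = 0$ for all $i > w$; the value $m_0(w) = k - \sum_{i < w} m_H(i)$ is then determined, giving uniqueness of both $m_0$ and $w$. The mirror argument, swapping the roles of the inequalities, handles $m_{\text{max}}$ and $w'$.

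For the inequality $w \leq w'$, the characterization above shows $\sum_{i < w} m_H(i) < k$ and $\sum_{i > w'} m_H(i) < k$. If $w > w'$, then every integer lies in $\{i : i < w\} \cup \{i : i > w'\}$, so $m = \sum_i m_H(i) \leq \sum_{i < w} m_H(i) + \sum_{i > w'} m_H(i) < 2k$, contradicting the assumption $k \leq m/2$. One may assume $k \leq m/2$ without loss of generality: the involution $m_S \mapsto m_H - m_S$ sends any putative solution with parameters $(k, s)$ to one with parameters $(m - k,\ \sum_i i \cdot m_H(i) - s - (n - r))$, and reindexes the equation for $q$ as the equation for $(n-r) - q$. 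Since $|\chi(X, \Omega^q_{X/K})| = |\chi(X, \Omega^{n-r-q}_{X/K})|$ by Serre duality on $X$ (which has dimension $n-r$), the system is preserved under this substitution, so we may indeed assume $k \leq m/2$.

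The main subtlety is justifying this reduction to $k \leq m/2$: it hinges on the Serre duality symmetry of the right-hand sides, which is transparent from Proposition \ref{prop: expression for cohomology in terms of Eulerian numbers} in the equal-$d_i$ case via the symmetry of $E_r(n, q)$ about $(n-r)/2$, and holds in general because $X$ is smooth and projective. The remainder of the argument (the exchange step and the counting bound) is purely combinatorial and should go through with no further obstacles.
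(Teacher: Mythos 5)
Your exchange argument for the structure and uniqueness of $w$ and $w'$ is correct and is essentially the intended proof (the paper simply defers to Lemma B.2 of \cite{ls25}, whose argument is exactly this greedy/exchange characterization). You have also correctly put your finger on the real content of the last assertion: the counting bound only yields $w \leq w'$ when $k \leq m/2$, and some duality reduction is needed to get there. Your involution $m_S \mapsto m_H - m_S$, together with $|\chi(X,\Omega^q_{X/K})| = |\chi(X,\Omega^{n-r-q}_{X/K})|$ from Serre duality, is the right mechanism and matches the reduction implicit in \cite{ls25} (there it comes from replacing the standard representation by its dual, $\wedge^k \leftrightarrow \wedge^{m-k}$).

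Two points need care, though. First, the reduction only stays inside the standing hypotheses when $m - k \geq 2$, i.e.\ $k \leq m-2$; at $k = m-1$ the involution lands at $k' = 1$, where the system genuinely has solutions (take $m_H(i) = |\chi(X,\Omega^{q}_{X/K})|$ suitably reindexed), and correspondingly for $k = m-1$ there are honest solutions with $w$ at the top of the support of $m_H$ and $w'$ at the bottom, so $w \leq w'$ actually fails. Thus the statement is only true for $2 \leq k \leq m-2$ (which is the range the wedge-power case of the classification actually produces, $\wedge^{m-1}$ being the dual standard representation), and your proof should say explicitly that it assumes this. Second, ``we may assume $k \leq m/2$'' cannot be invoked locally inside the proof of this lemma about a fixed putative solution: the conclusion $w \leq w'$ for the transformed solution does not formally imply it for the original one (the involution interchanges the roles of $m_0$ and $m_{\mathrm{max}}$, so it relates the new $(\tilde w, \tilde w')$ to the old $(w', w)$, not to $(w, w')$). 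The WLOG must instead be installed once and for all on the standing contradiction hypothesis --- replace the putative solution $(m_H, k, s)$ by $(m_H, m-k, s')$ at the outset, legitimate precisely when $k \leq m-2$ --- after which this lemma and everything downstream refer to the replaced solution. With those two adjustments your argument is complete and coincides with the intended one.
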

\begin{lemma}[Lemma B.3 from \cite{ls25}]\label{lem: basic summation for mH}
We have the equality \[\sum\limits_{i < w} (w-i) m_H(i) + \sum\limits_{i > w'} (i-w')m_H(i) + k(w' - w) = n - r.\]
\end{lemma}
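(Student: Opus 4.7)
The plan is to show that both sides of the claimed identity equal the quantity
\[
\Delta = \sum_i i\,m_{\text{max}}(i) - \sum_i i\,m_0(i),
\]
the spread of the linear functional $\sum_i i\,m_S(i)$ over valid configurations $m_S$, and then verify $\Delta = n - r$.

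First I would reduce the identity to computing $\Delta$. Using the description of $m_0$ and $m_{\text{max}}$ from Lemma \ref{lem: w and w' cutoffs}, together with $\sum_i m_0(i) = \sum_i m_{\text{max}}(i) = k$, one gets $\sum_{i<w} m_H(i) = k - m_0(w)$ and $\sum_{i > w'} m_H(i) = k - m_{\text{max}}(w')$. Expanding the two sums on the left-hand side of the lemma yields
\begin{align*}
\sum_{i<w}(w - i)\,m_H(i) &= w\bigl(k - m_0(w)\bigr) - \sum_{i<w} i\,m_H(i),\\
\sum_{i>w'}(i - w')\,m_H(i) &= \sum_{i>w'} i\,m_H(i) - w'\bigl(k - m_{\text{max}}(w')\bigr),
\end{align*}
so that after adding $k(w'-w)$ the $wk$ and $w'k$ terms cancel, leaving exactly $\Delta$ (using $\sum_i i\,m_0(i) = \sum_{i<w} i\,m_H(i) + w\,m_0(w)$ and the analogous identity for $m_{\text{max}}$).

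The heart of the proof is to show $\Delta = n - r$, equivalently that $s = \sum_i i\,m_0(i)$ and $s + n - r = \sum_i i\,m_{\text{max}}(i)$. Here I would crucially exploit that the equation from Lemma \ref{lem: numerical approach foundation} holds for \emph{all} $q \in \mathbb{Z}$. Since $\dim X = n - r$, the right-hand side $|\chi(X, \Omega^q_{X/K})|$ vanishes for $q < 0$ or $q > n - r$, forcing the left-hand side to be an empty sum; that is, no valid $m_S$ satisfies $\sum_i i\,m_S(i) = s + q$ for such $q$, so every valid $m_S$ has $s \leq \sum_i i\,m_S(i) \leq s + n - r$. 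Conversely, at $q = 0$ the right-hand side is $|\chi(X, \mathcal{O}_X)|$, which is nonzero: by Proposition \ref{prop: expression for cohomology in terms of Eulerian numbers} applied to the reference complete intersection, $(-1)^{n-r}\chi(X, \mathcal{O}_X)$ is a positive linear combination of strictly positive intersection numbers $H_1^{\epsilon(1)}\cdots H_r^{\epsilon(r)}$ (all $H_i$ being ample, and partitions $\epsilon$ with each $\epsilon(i) \geq 1$ contribute a factor of $\prod_i E(\epsilon(i),0) = 1$). At $q = n-r$, Serre duality gives $\chi(X, \Omega^{n-r}_{X/K}) = \pm\chi(X, \mathcal{O}_X)$, which is also nonzero. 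Hence valid $m_S$ realizing $\sum_i i\,m_S(i) = s$ and $\sum_i i\,m_S(i) = s + n - r$ both exist, so $\sum_i i\,m_0(i) = s$ and $\sum_i i\,m_{\text{max}}(i) = s + n - r$, as required.

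There is no genuine obstacle in this lemma beyond the conceptual step of invoking the Lemma \ref{lem: numerical approach foundation} equations outside the ``natural'' range $[0, n-r]$ to pin down the endpoints of the spread; the rest is routine bookkeeping.
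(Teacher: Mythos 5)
Your proof is correct and follows essentially the same route as the paper, which simply defers to Lemma B.3 of \cite{ls25}: reduce the identity to the spread $\sum_i i\,m_{\text{max}}(i)-\sum_i i\,m_0(i)$ via the structure of $m_0,m_{\text{max}}$ from Lemma \ref{lem: w and w' cutoffs}, and identify that spread with the length $n-r$ of the interval on which $|\chi(X,\Omega^q_{X/K})|$ is nonzero. Your explicit appeal to the ``for all $q\in\mathbb{Z}$'' form of the equations in Lemma \ref{lem: numerical approach foundation} (rather than only $0\leq q\leq n-r$ as literally indexed in Theorem \ref{thm: big monodromy statement}) is exactly the input needed to get equality rather than just ``$\geq$'', and matches the intended reading in \cite{ls25}.
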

The only reason for the difference between the appearance of this lemma and the original is that the range for $q$ above is between $0$ and $n-r,$ rather than $0$ and $n-1.$ The proofs of these lemmas are the same as their counterparts in \cite{ls25}.
\par Next, we show that $m_0(w)$ cannot be too small, if we have solution $m_H(w).$ We utilize here that $r \geq 2,$ since then $\chi(X, \mathcal{O}_X)$ is guaranteed to be comparatively large regardless of the hypersurfaces involved. 
\begin{lemma}\label{lem: m0 not 1.}
We have $m_0(w) \geq 2$ and $m_H(w) - m_0(w) \geq 2.$
\end{lemma}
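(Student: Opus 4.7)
I prove both assertions by contradiction, using the $q=0$ and $q=1$ equations together with the Eulerian asymptotics from Section \ref{section: eulerian}. I focus on the first, $m_0(w)\ge 2$; the second, $m_H(w)-m_0(w)\ge 2$, has a structurally parallel proof in which the roles of the two ``adjacent moves'' in the $q=1$ equation are swapped (assuming instead $m_0(w)=m_H(w)-1$, the same classification gives the $q=1$ equation $\binom{m_H(w)}{2}m_H(w+1)[m_H(w+1)\ge 1]+m_H(w-1)[m_H(w-1)\ge 1]=|\chi(X,\Omega^1_{X/K})|$, to which the same size comparison applies).

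The $q=0$ equation has a unique contributing term $m_S=m_0$, since $m_0$ uniquely minimizes $\sum i\,m_S(i)$ on the feasible set and the RHS is positive. Because $m_0(i)=m_H(i)$ for $i<w$ and $m_0(i)=0$ for $i>w$, the only non-trivial binomial factor is at $i=w$, giving $\binom{m_H(w)}{m_0(w)}=|\chi(X,\mathcal{O}_X)|$, so $m_0(w)=1$ would yield $m_H(w)=|\chi(X,\mathcal{O}_X)|$. To classify $q=1$ terms, I set $f=m_S-m_0$ and impose $\sum f=0$, $\sum i\,f=1$, together with the sign conditions $f(i)\le 0$ for $i<w$ and $f(i)\ge 0$ for $i>w$. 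Rewriting the moment condition as $\sum_i(i-w)f(i)=1$ with non-negative integer summands forces exactly one summand to equal $1$, so $f=\delta_{w+1}-\delta_w$ or $f=\delta_w-\delta_{w-1}$. Substituting $m_0(w)=1$, the $q=1$ equation becomes
\[
m_H(w+1)\cdot[m_H(w+1)\ge 1] \;+\; m_H(w-1)\binom{m_H(w)}{2}\cdot[m_H(w-1)\ge 1,\ m_H(w)\ge 2] \;=\; |\chi(X,\Omega^1_{X/K})|.
\]

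The contradiction is then extracted by case analysis. If $m_H(w-1)=m_H(w+1)=0$, the LHS vanishes while the RHS is positive. If $m_H(w-1)\ge 1$, then $\binom{|\chi(X,\mathcal{O}_X)|}{2}\le |\chi(X,\Omega^1)|$; combining the ampleness inequality $H^n\ge n!$ (since $h^0(A,\mathcal{O}_A(H))=H^n/n!$) with the Eulerian-number bounds of Lemmas \ref{lem: Eulerian asymptotic} and \ref{lem: more useable asymptotic}, applied to the formula of Proposition \ref{prop: expression for cohomology in terms of Eulerian numbers}, yields $|\chi(X,\mathcal{O}_X)|^2/|\chi(X,\Omega^1)|\gtrsim r^{-1}(r^2/(r+1))^n$, which for $r\ge 2$ and $n\ge 10r^4+1000$ vastly exceeds $2$. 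In the residual case ($m_H(w+1)\ge 1$, $m_H(w-1)=0$), the hypothesis $k\ge 2$ combined with $m_0(w)=1$ forces $m_H(w_*)\ge 1$ for some $w_*<w-1$; examining the $q=w-w_*$ equation, the ``move from $w_*$ to $w$'' contributes $m_H(w_*)\binom{m_H(w)}{2}$ and an analogous size mismatch obtains against the Eulerian upper bound on $|\chi(X,\Omega^{w-w_*})|$.

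The main technical obstacle is this quantitative size comparison between $|\chi(X,\mathcal{O}_X)|^2$ and $|\chi(X,\Omega^1_{X/K})|$: one needs $r^2/(r+1)>1$, which is ensured by $r\ge 2$ and fails for $r=1$, where $|\chi(X,\mathcal{O}_X)|$ can equal $1$ and the bound $\binom{1}{2}=0$ is vacuous. This is precisely why the $r=1$ case is handled separately in \cite[Proposition B.1]{ls25}. The explicit constant in $n\ge 10r^4+1000$ is what makes the comparison rigorous uniformly in $r\ge 2$, including the residual-case analysis at larger $q$.
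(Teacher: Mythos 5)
Your opening moves coincide with the paper's: the unique $q=0$ term gives $\binom{m_H(w)}{m_0(w)}=|\chi(X,\mathcal{O}_X)|$, the classification of the two $q=1$ moves gives $m_H(w+1)+m_H(w-1)\binom{m_H(w)}{2}=|\chi(X,\Omega^1_{X/K})|$, and the second assertion is handled by symmetry. The genuine gap is in your residual case ($m_H(w-1)=0$, $m_H(w_*)\ge 1$ for some $w_*<w-1$). There you extract from the $q=w-w_*$ equation the term $m_H(w_*)\binom{m_H(w)}{2}$ and assert an ``analogous size mismatch'' against an Eulerian upper bound on $|\chi(X,\Omega^{w-w_*}_{X/K})|$; but the inequality you need, $|\chi(X,\Omega^{s}_{X/K})|<\binom{|\chi(X,\mathcal{O}_X)|}{2}$ with $s=w-w_*$, is simply false once $s$ is large. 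Nothing bounds $s$ a priori (it can be of order $n-r$), and in the worst case ($H^n=n!$, all $d_i=1$) one has $|\chi(X,\Omega^{s}_{X/K})|\asymp E_r(n,s)\sim\binom{s+r-1}{r-1}(s+r)^n$ while $\binom{|\chi(X,\mathcal{O}_X)|}{2}\sim\tfrac12 r^{2n}$, so the comparison reverses as soon as $s+r$ exceeds roughly $r^2$. This is exactly why Lemma \ref{lem: inequality to show mH(w-s) not 0 means s large} only concludes $s\ge r-1$ from this comparison, and why the paper then runs a second stage: for $q<s$ no mass below $w$ can move, forcing $m_H(w+i)=|\chi(X,\Omega^i_{X/K})|$ for $i\le s-1$, and then the $q=3s-2$ equation contains the term $m_H(w-s)\binom{m_H(w+s-1)}{2}\ge\binom{|\chi(X,\Omega^{s-1}_{X/K})|}{2}$, which contradicts Lemma \ref{lem: large s means one term in 3s-2 region is too big} \emph{uniformly in $s$}, since there the square grows with $s$ (roughly $(s+r-1)^{2n}$ against $(3s-2+r)^n$). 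Your one-step argument is missing this bootstrap, so the residual case does not close.

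A secondary issue is the quantitative claim in your case $m_H(w-1)\ge 1$. What you need there is precisely the $s=1$ instance of the inequality $|\chi(X,\Omega^1_{X/K})|<\binom{|\chi(X,\mathcal{O}_X)|}{2}$, and for $r=2$ this is genuinely borderline in the generality of the appendix (arbitrary ample classes): there $|\chi(X,\Omega^1_{X/K})|/|\chi(X,\mathcal{O}_X)|$ can be as large as about $2^{n-1}$ when the classes are very unbalanced, while the guaranteed lower bound on $|\chi(X,\mathcal{O}_X)|/2$ is also only about $2^{n-1}$; the paper settles exactly this case by the long log-concavity argument for the intersection numbers at the end of the proof of Lemma \ref{lem: large s means one term in 3s-2 region is too big}, not by the crude bound $E(m,1)\le 2^m$ from Lemma \ref{lem: Eulerian asymptotic}. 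In the theorem's actual setting ($d_iH$ for a common ample $H$) your inequality does hold with exponential room, but your stated rate $r^{-1}\bigl(r^2/(r+1)\bigr)^n$ is heuristic and does not follow from the bounds you invoke without an argument of the coefficientwise type used in the appendix; also note that citing Lemma \ref{lem: inequality to show mH(w-s) not 0 means s large} is not an option here, since its conclusion $s\ge r-1$ is vacuous for $r=2$.
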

\begin{proof}
Assume for the sake of contradiction that $m_0(w) = 1.$ Then, the equations for $q = 0, 1$ read as follows:
\[m_H(w) = |\chi(X, \mathcal{O}_X)|,\] \[m_H(w-1)\binom{m_H(w)}{2} + m_H(w+1) = |\chi(X, \Omega^1_{X/K})|,\] for instance.
\par Let $s$ be the smallest positive integer such that $m_H(w-s) \neq 0.$ In particular, we have that $m_H(w-1) = m_H(w-2) = \cdots = m_H(w-s-1) = 0.$ We claim that $s \geq r - 1.$
\par To prove this, note that for this value of $s$ that one of the terms for $q = s$ is $m_H(w-s) \binom{m_H(w)}{2},$ with the corresponding term being $|\chi(X, \Omega^{s}_{X/K})|.$ Dividing by $m_H(w) = |\chi(X, \mathcal{O}_X)|,$ we thus know for this value of $s$ that \[m_H(w-s) \frac{(m_H(w) - 1)}{2} \leq \left|\frac{\chi(X, \Omega^s_{X/K})}{\chi(X, \mathcal{O}_X)}\right|.\] But we know that $m_H(w-s) \geq 1$ and $m_H(w) = |\chi(X, \mathcal{O}_X)|.$ Hence, for this value of $s,$ we need $|\chi(X, \Omega^s_{X/K})| \geq |\chi(X, \mathcal{O}_X)|\frac{|\chi(X, \mathcal{O}_X)| - 1}{2}.$ We will prove that in Lemma \ref{lem: inequality to show mH(w-s) not 0 means s large} that this implies that $s \geq r - 1.$
\par From here, note that this implies that $m_H(w+i) = |\chi(X, \Omega_{X/K}^i)|$ for $i = 1, 2, \ldots, s-1.$ However, note that if we look at the equation for $3s - 2,$ we have a term that looks like $m_H(w-s) \binom{m_H(w + s - 1)}{2}.$ We thus arrive at the inequality \[|\chi(X, \Omega_{X/K}^{3s-2})| \geq \frac{|\chi(X, \Omega_{X/K}^{s-1})|(|\chi(X, \Omega_{X/K}^{s-1})|-1)}{2},\] which we show cannot hold in Lemma \ref{lem: large s means one term in 3s-2 region is too big}. This is a contradiction, and thus shows that $m_0(w) \neq 1.$
\par Symmetrically, observe that if $m_H(w) - m_0(w) = 1,$ we would get the equations \[m_H(w) = |\chi(X, \mathcal{O}_X)|,\] \[m_H(w+1)\binom{m_H(w)}{2} + m_H(w-1) = |\chi(X, \Omega^1_{X/K})|,\] where the same argument as above (swapping $m_H(w-i)$ and $m_H(w+i)$ in the above argument) would yield a contradiction.
\end{proof}
\par We now split up our argument into two cases: the case where $w \neq w',$ and the case where $w = w'.$ We will start by giving the main outline of the argument, reducing it down to explicit inequalities involving the polynomials that we found in Section \ref{section: cohomology computation}. We then save the proof of these inequalities for the appendix.
\par In particular, note that if one can prove the inequalities in the appendix for other families subvarieties of $A,$ then one obtains the desired big monodromy statement for that family of subvarieties of $A.$ 
\subsection{The $w \neq w'$ case}
We first assume that $w \neq w'.$ In this case, we know that, because of the equalities above, $k \leq n - r,$ and therefore $m_0(w) \leq n - r.$
\par We begin by looking at the ratio of the equations for $|\chi(X, \mathcal{O}_X)|$ and $|\chi(X, \Omega^1_{X/K})|,$ yielding the following equation: \[m_H(w-1) \frac{m_H - m_0}{m_0 + 1} + m_H(w+1) \frac{m_0}{m_H - m_0 + 1} = \left|\frac{\chi(X, \Omega^1_{X/K})}{\chi(X, \mathcal{O}_X)}\right|.\] In particular, we know that at least one of these terms is at least $\left|\frac{\chi(X, \Omega^1_{X/K})}{2\chi(X, \mathcal{O}_X)}\right|.$ We can do a casework on which of these terms satisfies this inequality to show that both of them yield a contradiction.
\begin{lemma}\label{lem: m0 cannot be too big}
In the situation given, we must have $$\frac{m_0(w)}{m_H(w) - m_0(w) + 1} < 4.$$ 
\end{lemma}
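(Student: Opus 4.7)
The plan is to derive a contradiction from the assumption $R := m_0(w)/(m_H(w) - m_0(w) + 1) \geq 4$. This is equivalent to $m_H(w) - m_0(w) + 1 \leq m_0(w)/4$, so in particular $\frac{m_H(w) - m_0(w)}{m_0(w) + 1} < \frac{1}{4}$. The $q = 1$ equation displayed before the lemma statement has two nonnegative summands, so one of them exceeds half the right-hand side $\left|\chi(X, \Omega^1_{X/K})/\chi(X, \mathcal{O}_X)\right|$, and I handle the two cases separately.

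\textbf{Case A:} $m_H(w-1) \cdot \frac{m_H(w) - m_0(w)}{m_0(w) + 1} \geq \tfrac{1}{2}\left|\chi(X, \Omega^1_{X/K})/\chi(X, \mathcal{O}_X)\right|$. Combined with $\frac{m_H(w) - m_0(w)}{m_0(w) + 1} < \tfrac{1}{4}$, this forces $m_H(w-1) > 2\left|\chi(X, \Omega^1_{X/K})/\chi(X, \mathcal{O}_X)\right|$. However, the single $i = w - 1$ contribution to Lemma \ref{lem: basic summation for mH} gives $m_H(w-1) \leq n - r$, so $\left|\chi(X, \Omega^1_{X/K})/\chi(X, \mathcal{O}_X)\right| \leq (n - r)/2$. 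On the other hand, Proposition \ref{prop: expression for cohomology in terms of Eulerian numbers} realises this ratio as a positively-weighted average of $\sum_i E(\epsilon(i), 1)$ over $\epsilon \in F(r, n)$ with $|\epsilon| = n$; by pigeonhole some $\epsilon(i_0) \geq \lceil n/r \rceil$, so the average is at least $E(\lceil n/r \rceil, 1) = 2^{\lceil n/r \rceil} - \lceil n/r \rceil - 1$. Since $n \geq 10r^4 + 1000$ forces $n/r$ to be huge, this dwarfs $(n - r)/2$, a contradiction.

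\textbf{Case B:} $m_H(w+1) \cdot R \geq \tfrac{1}{2}\left|\chi(X, \Omega^1_{X/K})/\chi(X, \mathcal{O}_X)\right|$. To leverage $R$, I look at the $q = 2$ equation and isolate the term corresponding to $m_S(w) = m_0(w) - 2$, $m_S(w + 1) = 2$, which after dividing by $\binom{m_H(w)}{m_0(w)}$ equals
\[
\binom{m_H(w+1)}{2}\cdot\frac{m_0(w)(m_0(w)-1)}{(m_H(w) - m_0(w) + 1)(m_H(w) - m_0(w) + 2)}.
\]
The second factor is bounded below by $R^2/4$ once $R \geq 4$, so if $m_H(w + 1) \geq 2$, then $\left|\chi(X, \Omega^2_{X/K})/\chi(X, \mathcal{O}_X)\right| \gtrsim (m_H(w+1) R)^2 \gtrsim \left|\chi(X, \Omega^1_{X/K})/\chi(X, \mathcal{O}_X)\right|^2$ up to an absolute constant. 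But the Eulerian asymptotics (Lemma \ref{lem: Eulerian asymptotic}, applied via Proposition \ref{prop: expression for cohomology in terms of Eulerian numbers}) give the opposite estimate: $\left|\chi(X, \Omega^2_{X/K})/\chi(X, \mathcal{O}_X)\right| / \left|\chi(X, \Omega^1_{X/K})/\chi(X, \mathcal{O}_X)\right|^2$ decays exponentially in $n/r^2$, so it is much smaller than any fixed constant for $n \geq 10r^4 + 1000$. If instead $m_H(w + 1) \in \{0, 1\}$, the term used above vanishes or is linear, and I replace it with the analogous shift $m_S(w) = m_0(w) - q$, $m_S(w + j) = q$ for the smallest $j \geq 1$ with $m_H(w + j) \geq q$, appealing to quantitative inequalities relegated to the appendix.

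The main obstacle is Case B when $m_H(w + 1)$ is small: one must track the $R^q$-enhancement through higher-order $q$, bound the values $m_H(w + j)$ using Lemma \ref{lem: basic summation for mH}, and invoke sharp quantitative forms of the Eulerian asymptotics to secure the contradiction uniformly. These technical inputs live in the appendix, and it is precisely their strength that fixes the numerical threshold $n \geq 10r^4 + 1000$.
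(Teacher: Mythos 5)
Your proposal takes a genuinely different route from the paper, and a substantially more roundabout one: the paper proves this lemma using \emph{only the $q=0$ equation}. The argument there is one line of ideas: since $\binom{m_H(w)}{m_0(w)} = |\chi(X,\mathcal{O}_X)| \geq E_r(n,0) \geq 2^{n-r+2} - (n-r+3)$ (exponentially large in $n$), while the hypothesis $m_0(w)/(m_H(w)-m_0(w)+1) \geq 4$ together with $m_0(w) \leq n-r$ (from $k \leq n-r$ in the $w \neq w'$ case) forces $m_H(w) \leq \tfrac{5}{4}m_0(w)$, hence $\binom{m_H(w)}{m_0(w)} \leq \binom{\lceil 5m_0(w)/4\rceil}{\lceil m_0(w)/4\rceil} \leq 14^{(n-r+3)/4}$, which is incompatible for $n \geq r+21$. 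No case split on the $q=1$ equation is needed; the two subsequent lemmas in the paper (which do use that case split) \emph{consume} the bound $R<4$ rather than produce it. By contrast, your proposal essentially re-derives a subset of those downstream arguments under the opposite hypothesis $R \geq 4$, which is why it lands you in technical difficulties the paper avoids.

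There are two concrete gaps in your Case~B. First, the constant: you bound the ratio factor below by $R^2/4$ and the binomial by $m_H(w+1)^2/4$, which chains to $|\chi^2/\chi^0| \geq \tfrac{1}{64}|\chi^1/\chi^0|^2$. But the appendix inequality (Lemma~\ref{lem: the core q=2 inequality}) only asserts $|\chi^2/\chi^0| < \tfrac{1}{48}|\chi^1/\chi^0|^2$, and since $\tfrac{1}{64} < \tfrac{1}{48}$, there is no contradiction as written. You would need to observe that $R \geq 4$ forces $m_0(w)\geq 4$, giving $\tfrac{m_0-1}{m_0}\cdot\tfrac{m_H-m_0+1}{m_H-m_0+2} \geq \tfrac{3}{8}$ rather than $\tfrac14$, which pushes the final constant to $\tfrac{3}{128} > \tfrac{1}{48}$ and does yield a contradiction. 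Second, and more seriously, the $m_H(w+1) \leq 1$ sub-case is only gestured at. You cannot borrow the paper's mechanism here, since the paper deduces $m_H(w+1) \geq 2$ from $R < 4$, which is exactly what you are trying to establish. The sub-case is in fact easy: $m_H(w+1)=0$ throws you back into Case~A, and $m_H(w+1)=1$ forces $R \geq \tfrac12|\chi^1/\chi^0|$ while $R \leq m_0(w) \leq n-r$, contradicting the exponential growth of $|\chi^1/\chi^0|$. But the ``shift $m_S(w)=m_0(w)-q$, $m_S(w+j)=q$'' device you propose instead is both unnecessary and unjustified as stated. Both gaps are fillable, but the paper's one-equation argument shows they need not arise at all.
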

\begin{proof}
We start with the equation $\binom{m_H(w)}{m_0(w)} = |\chi(X, \mathcal{O}_X)|.$ However, looking at the formula for this Euler characteristic, Proposition \ref{prop: euler characteristics}, we see that $|\chi(X, \mathcal{O}_X)| \geq \frac{(H)^n}{n!}E_r(n, 0).$ But we know that the term $\frac{(H)^n}{n!}$ is a positive integer, using Riemann-Roch (again, see \cite[\S 16]{mumford}) and the fact that $H$ is ample. 
\par It thus follows that $\binom{m_H(w)}{m_0(w)} \geq E_r(n, 0).$ At the same time, note that $E_r(n, 0) \geq E_2(n-r + 2, 0):$ we can get a permutation of $n$ elements and $r - 1$ indices to ignore by taking a permutation of $n - r + 2$ with one ignored position, then appending the numbers $n - r + 3, n - r + 4, \ldots, n.$ 
\par Thus, we have $\binom{m_H(w)}{m_0(w)} \geq E_r(n-r+2, 0) \geq 2^{n-r+2} - (n-r + 3),$ the last inequality coming from Lemma \ref{lem: Eulerian asymptotic}.
\par However, if $\frac{m_0(w)}{m_H(w) - m_0(w) + 1} \geq 4,$ then $4m_H(w) \leq 5m_0(w).$ Therefore, we have that $\binom{m_H(w)}{m_0(w)} \leq \binom{\lceil \frac{5}{4}m_0(w) \rceil}{m_0(w)} = \binom{\lceil \frac{5m_0(w)}{4}\rceil}{\lceil \frac{m_0(w)}{4}\rceil}$ However, observe that $n! > (n/e)^n,$ for instance; we can use induction, observing that $2 > (2/e)^2$ and that $\frac{(n+1)!}{n!} = (n+1) \geq (n+1) \frac{(1 + 1/n)^n}{e} = \frac{((n+1)/e)^{n+1}}{(n/e)^n}$). We can therefore bound the above by \[(\frac{1}{e}\lceil \frac{m_0(w)}{4}\rceil)^{-\lceil \frac{m_0(w)}{4}\rceil} (\lceil \frac{5}{4}m_0(w) \rceil)^{\lceil \frac{m_0(w)}{4}\rceil}.\] But we can check that \[\frac{\lceil \frac{5}{4}m_0(w) \rceil}{\lceil \frac{m_0(w)}{4}\rceil} \leq 5,\] so hence the above is at most $(5e)^{\lceil \frac{m_0(w)}{4}\rceil} \leq (4e)^{\frac{n-r+3}{4}}.$ But $5e < 14 < 2^4;$ therefore, we can bound this above by $(14)^{(n-r+3)/4}.$
\par In other words, we need $2^{n-r+2} - (n-r+3) \leq (14)^{(n-r+3)/4}.$ Note that the increasing $n$ by $1$ scales the left-hand side up by a factor more than $2,$ but the right-hand side by at most $14^{1/4} < 2;$ it thus suffices to find one such $n$ where this inequality holds.
\par But we can take $n = r + 21;$ we then have that $14^6 = 7529536,$ but $2^{23} - 24 = 8388584,$ so hence we see that this inequality is satisfied for $n \geq r + 21,$ and in particular for the $n$ ranges that we specified in Theorem \ref{thm: big monodromy statement}.
\end{proof}
\begin{lemma}
For the ranges of $n$ specified above, we cannot have $m_H(w+1) \frac{m_0}{m_H - m_0 + 1} \geq \left|\frac{\chi(X, \Omega^1_{X/K})}{2\chi(X, \mathcal{O}_X)}\right|.$
\end{lemma}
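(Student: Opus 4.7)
The plan is to use the $q=2$ equation of the system to exhibit a single summand on the left-hand side whose magnitude, under the assumed inequality, forces $|\chi(X, \Omega^2_{X/K})/\chi(X, \mathcal{O}_X)| \geq c \, |\chi(X, \Omega^1_{X/K})/\chi(X, \mathcal{O}_X)|^2$ for an absolute constant $c > 0$, and then contradict this using the Eulerian-number asymptotics of Section \ref{section: eulerian}. Write $\rho := m_0(w)/(m_H(w)-m_0(w)+1)$. Lemma \ref{lem: m0 cannot be too big} gives $\rho < 4$, and combining this with the hypothesis $m_H(w+1)\rho \geq \tfrac{1}{2}|\chi(X, \Omega^1_{X/K})/\chi(X, \mathcal{O}_X)|$ yields $m_H(w+1) \geq \tfrac{1}{8}|\chi(X, \Omega^1_{X/K})/\chi(X, \mathcal{O}_X)|$; the latter is exponentially large in $n$ by Lemmas \ref{lem: Eulerian asymptotic} and \ref{lem: more useable asymptotic}, so in particular $m_H(w+1) \geq 2$, and $m_0(w) \geq 2$ is given by Lemma \ref{lem: m0 not 1.}.

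Next, I would isolate the summand of the $q=2$ equation corresponding to $m_S = m_0 - 2[w] + 2[w+1]$, namely $\binom{m_H(w)}{m_0(w)-2}\binom{m_H(w+1)}{2}$. Since every summand is nonnegative, dividing the full equation through by $|\chi(X, \mathcal{O}_X)| = \binom{m_H(w)}{m_0(w)}$ produces
\[
\left|\frac{\chi(X, \Omega^2_{X/K})}{\chi(X, \mathcal{O}_X)}\right| \;\geq\; \binom{m_H(w+1)}{2} \cdot \frac{m_0(w)(m_0(w)-1)}{(m_H(w)-m_0(w)+1)(m_H(w)-m_0(w)+2)}.
\]
Using $m_0(w)-1 \geq m_0(w)/2$ (since $m_0(w)\geq 2$), $m_H(w)-m_0(w)+2 \leq 2(m_H(w)-m_0(w)+1)$, and $\binom{m_H(w+1)}{2} \geq m_H(w+1)^2/4$, the right-hand side is bounded below by $(m_H(w+1)\rho)^2/16$. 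Invoking the hypothesis then gives $|\chi(X, \Omega^2_{X/K})/\chi(X, \mathcal{O}_X)| \geq \tfrac{1}{64}|\chi(X, \Omega^1_{X/K})/\chi(X, \mathcal{O}_X)|^2$.

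Finally, I would contradict this using the Eulerian-number asymptotics. By Proposition \ref{prop: expression for cohomology in terms of Eulerian numbers}, with $H_i = d_i H$, the Euler characteristic $|\chi(X, \Omega^q_{X/K})|$ is $(H^n)/n!$ times a weighted sum $\sum_\epsilon \binom{n}{\epsilon}\prod d_i^{\epsilon(i)}\prod_i E(\epsilon(i), t(i))$ over the relevant $\epsilon$ and $t$. Combined with Lemmas \ref{lem: Eulerian asymptotic} and \ref{lem: more useable asymptotic}, this makes $|\chi(X, \Omega^q_{X/K})/\chi(X, \mathcal{O}_X)|$ comparable, up to a factor that is polynomial in $r$ and the $d_i$ but independent of $n$, to $\binom{q+r-1}{r-1}((q+r)/r)^n$. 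Hence
\[
\frac{|\chi(X, \Omega^1_{X/K})/\chi(X, \mathcal{O}_X)|^2}{|\chi(X, \Omega^2_{X/K})/\chi(X, \mathcal{O}_X)|} \;\gtrsim\; \left(\frac{(r+1)^2}{r(r+2)}\right)^{\!n} = \left(1 + \frac{1}{r(r+2)}\right)^{\!n},
\]
which dominates any constant polynomial in $r$ (in particular exceeds $64$ times the correction factors) once $n$ is a modest multiple of $r^2$, easily within the stated ranges $n \geq 10r^2 + 1000$ and $n \geq 10r^4 + 1000$. This contradicts the bound from the previous paragraph.

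The step I expect to demand the most care is the last one: when the $d_i$ are not all equal, the weighting $\prod d_i^{\epsilon(i)}$ appears in both numerator and denominator of the ratio $|\chi(X,\Omega^q)/\chi(X,\mathcal{O})|$, and one must show that this weighting shifts the two sides by comparable amounts so the clean Eulerian asymptotic still applies. This is exactly the gap between the $10r^2$ and $10r^4$ exponents, and the reason that the equi-degree case is noticeably easier.
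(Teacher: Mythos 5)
Your reduction is the same as the paper's: from the hypothesis and Lemma \ref{lem: m0 cannot be too big} you get $m_H(w+1)$ large (the paper gets $m_H(w+1)\geq 2$ from Lemma \ref{lem: the respective ratios are big enough}, which is the right citation in the unequal-$d_i$ case rather than a direct appeal to the Eulerian lemmas), you isolate exactly the summand $m_S = m_0 - 2[w] + 2[w+1]$ of the $q=2$ equation, and you square the hypothesized lower bound to force $\left|\chi(X,\Omega^2_{X/K})/\chi(X,\mathcal{O}_X)\right| \geq c\left|\chi(X,\Omega^1_{X/K})/\chi(X,\mathcal{O}_X)\right|^2$. Up to this point the argument is correct (your constant is $1/64$ where the paper, with slightly sharper bounds $\frac{m_0(w)-1}{m_H(w)-m_0(w)+2}\geq \frac13\cdot\frac{m_0(w)}{m_H(w)-m_0(w)+1}$, gets $1/48$).

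The genuine gap is in your closing step. The paper ends by invoking Lemma \ref{lem: the core q=2 inequality}, whose proof in the appendix is precisely the delicate part; you instead try to dispatch it with the claim that $\left|\chi(X,\Omega^q_{X/K})/\chi(X,\mathcal{O}_X)\right|$ is comparable to $\binom{q+r-1}{r-1}\bigl((q+r)/r\bigr)^n$ up to factors independent of $n$. That claim is false when the $d_i$ are unequal: the exponential rate itself depends on the $d_i$ (for $r=2$ with $d_1\ll d_2$ and $D=d_1+d_2$, the ratio for $q=1$ grows like $\bigl((D+d_2)/D\bigr)^{n}$, not $(3/2)^n$), and the paper's general-case lower bound for $\chi(X,\Omega^1_{X/K})$ carries a factor $r^r/n^r$ that is polynomial in $n$, not constant --- this loss is exactly why the hypothesis $n\geq 10r^4+1000$ (rather than $10r^2+1000$) is needed there. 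You flag this as ``the step demanding the most care,'' but flagging it does not supply the argument; as written, your proof only covers the equal-$d_i$ case, where the sketch does go through. A secondary point: if you instead wanted to quote Lemma \ref{lem: the core q=2 inequality} as stated, your constant $1/64$ is too weak, since $\frac{1}{64}X^2\leq Y$ is compatible with $Y<\frac{1}{48}X^2$; you would need to tighten the elementary bounds to recover $1/48$ (as the paper does) or prove the appendix inequality with the smaller constant.
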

\begin{proof}
First, note that if the inequality given above were true, then we'd have that $m_H(w+1) > \left|\frac{\chi(X, \Omega^1_{X/K})}{8 \chi(X, \mathcal{O}_X)}\right|;$ in particular, by Lemma \ref{lem: the respective ratios are big enough}, this is larger than $2.$ However, note that one of the terms arising in the equation for $q = 2$ (after dividing out by the zeroth equation) is given by $\binom{m_H(w+1)}{2} \frac{m_0(w)}{m_H(w) - m_0(w) + 1} \frac{m_0(w) - 1}{m_H(w) - m_0(w) + 2};$ this should be at most $\left|\frac{\chi(X, \Omega^2_{X/K})}{\chi(X, \mathcal{O}_X)}\right|.$
\par However, note that $\frac{\frac{m_0(w) - 1}{m_H(w) - m_0(w) + 2}}{\frac{m_0(w)}{m_H(w) - m_0(w) + 1}} \geq \frac{1}{3},$ as $m_0(w), m_H(w) - m_0(w) + 1 \geq 2.$ Therefore, the above implies that $$\frac{1}{6}\frac{m_H(w+1) - 1}{m_H(w+1)} \left(m_H(w+1) \frac{m_0(w)}{m_H(w) - m_0(w) + 1}\right)^2 \leq \left|\frac{\chi(X, \Omega^2_{X/K})}{\chi(X, \mathcal{O}_X)}\right|.$$ In other words, as we know that $m_H(w+1) \geq 2$ in this case, we would need \[\frac{1}{48} \left(\frac{\chi(X, \Omega^2_{X/K})}{\chi(X, \mathcal{O}_X)}\right)^2 \leq \left|\frac{\chi(X, \Omega^2_{X/K})}{\chi(X, \mathcal{O}_X)}\right|.\] But this contradicts Lemma \ref{lem: the core q=2 inequality}.
\end{proof}
This handles the first case. For the second, we can utilize the fact that $m_H(w-1) \leq n-r$ by Lemma \ref{lem: basic summation for mH} to obtain our desired contradiction.
\begin{lemma}\label{lem: mH(w-1) term big}
For the ranges of $n$ specified above, we cannot have $m_H(w-1) \frac{m_H(w) - m_0(w)}{m_0(w) + 1} \geq \left|\frac{\chi(X, \Omega^1_{X/K})}{2\chi(X, \mathcal{O}_X)}\right|.$
\end{lemma}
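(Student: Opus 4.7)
The plan is to adapt the strategy of the preceding lemma. Set $R := |\chi(X, \Omega^1_{X/K})/\chi(X, \mathcal{O}_X)|$ and $Y := (m_H(w)-m_0(w))/(m_0(w)+1)$, so the hypothesis reads $m_H(w-1)\,Y \geq R/2$.

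The natural term to extract from the $q=2$ equation, after dividing by the $q=0$ equation, is the one corresponding to the shift $m_S = m_0 + 2[w] - 2[w-1]$, namely
\[
\binom{m_H(w-1)}{2} \cdot \frac{(m_H(w)-m_0(w))(m_H(w)-m_0(w)-1)}{(m_0(w)+1)(m_0(w)+2)} \leq \Bigl|\frac{\chi(X, \Omega^2_{X/K})}{\chi(X, \mathcal{O}_X)}\Bigr|.
\]
When $m_H(w-1) \geq 2$, Lemma \ref{lem: m0 not 1.} (which gives $m_0(w), m_H(w)-m_0(w) \geq 2$) bounds the second factor below by $\tfrac{3}{8}Y^2$ and the first by $m_H(w-1)^2/4$. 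Squaring the hypothesis and combining would then yield $\tfrac{3}{128}R^2 \leq |\chi(X, \Omega^2_{X/K})/\chi(X, \mathcal{O}_X)|$, which contradicts Lemma \ref{lem: the core q=2 inequality} in exactly the same way as in the previous lemma (only with a slightly smaller leading constant).

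The main obstacle is the subcase $m_H(w-1) = 1$, in which the displayed term vanishes. Here the hypothesis sharpens to $Y \geq R/2$, forcing $m_H(w) - m_0(w) \geq \tfrac{3}{2}R$, which is enormous compared to $n-r$. My plan in this subcase is to exploit the $q=0$ identity $\binom{m_H(w)}{m_0(w)} = |\chi(X, \mathcal{O}_X)|$ together with the bound $m_0(w) \leq n-r$, which follows from Lemma \ref{lem: basic summation for mH} under $w \neq w'$, to pin down $m_H(w)$ in terms of $|\chi(X, \mathcal{O}_X)|$ and $m_0(w)$ and force $|\chi(X, \mathcal{O}_X)|$ to grow at least like a power of $R$. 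Feeding this back into the Eulerian-number asymptotics of Lemmas \ref{lem: Eulerian asymptotic} and \ref{lem: more useable asymptotic}, which already show that $R$ grows exponentially in $n$, should produce a contradiction for $n$ in the stated range. The hint that $m_H(w-1) \leq n-r$ enters most directly here, since the hypothesis can also be rewritten as $Y \geq R/(2\,m_H(w-1)) \geq R/(2(n-r))$, giving quantitative control on how large $Y$ must be and hence how far out of balance $m_H(w)$ and $m_0(w)$ are; making this quantitative comparison precise is the delicate part I expect to be the principal difficulty.
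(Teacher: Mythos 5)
Your first case ($m_H(w-1)\geq 2$) is correct and is essentially the paper's argument: extract the term $\binom{m_H(w-1)}{2}\frac{(m_H(w)-m_0(w))(m_H(w)-m_0(w)-1)}{(m_0(w)+1)(m_0(w)+2)}$ from the $q=2$ equation, lower-bound it by a constant times $\bigl(m_H(w-1)\frac{m_H(w)-m_0(w)}{m_0(w)+1}\bigr)^2$ using Lemma \ref{lem: m0 not 1.}, and contradict Lemma \ref{lem: the core q=2 inequality}; your constant $3/128$ versus the paper's $1/48$ is immaterial.

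The subcase $m_H(w-1)=1$, however, is exactly the delicate part, and the mechanism you propose for it does not work. From $\binom{m_H(w)}{m_0(w)}=|\chi(X,\mathcal{O}_X)|$, $m_0(w)\leq n-r$, and $Y\geq R/2$ (with $R=|\chi(X,\Omega^1_{X/K})/\chi(X,\mathcal{O}_X)|$) you can indeed force $|\chi(X,\mathcal{O}_X)|$ to be at least on the order of $R^2$, but this is not a contradiction: $|\chi(X,\mathcal{O}_X)|$ carries the unbounded factor $(H^n)/n!$ (more generally, the intersection numbers $H_1^{e_1}\cdots H_r^{e_r}$), which cancels in the ratio $R$. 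So the configuration $m_0(w)=2$, $m_H(w)\approx\sqrt{2|\chi(X,\mathcal{O}_X)|}$ is perfectly consistent with the $q=0$ and $q=1$ equations and with every Eulerian asymptotic in the appendix whenever the polarization degree is large; no contradiction can be extracted from that data alone. The paper's proof instead stays with the $q=2$ equation (whose $\binom{m_H(w-1)}{2}$ term now vanishes) and shows that \emph{all} of its remaining terms --- those involving $m_H(w-2)$, $m_H(w+1)$, $m_H(w+2)$ --- are collectively smaller than $|\chi(X,\Omega^2_{X/K})/\chi(X,\mathcal{O}_X)|$: one bounds $m_H(w-2)\leq\frac{n-r}{2}$ by Lemma \ref{lem: basic summation for mH}, bounds $m_H(w+1)(m_H(w+1)-1)$ and $m_H(w+2)(m_H(w+2)-1)$ by extracting suitable terms from the $q=3$ and $q=5$ equations (here $m_H(w-1)=1$ and $Y\geq R/2$ are used to convert the factor $\frac{m_0(w)}{m_H(w)-m_0(w)+1}$ into something at most $2/R$), and then invokes the dedicated appendix inequality Lemma \ref{lem: remaining terms in q=2} to conclude. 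Your sketch contains no analogue of this step --- no use of the higher-$q$ equations and no comparison of $|\chi^2/\chi^0|$ against $\frac{n-r}{2}|\chi^1/\chi^0|$ plus the square-root terms --- so the proof as proposed has a genuine gap precisely where the real work lies.
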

\begin{proof}
Again, we prove this by contradiction. First, suppose that $m_H(w-1) \geq 2.$ We can again look at the $q = 2$ equation. This time, the term that we are interested in is $\binom{m_H(w-1)}{2} \frac{m_H(w) - m_0(w)}{m_0(w) + 1} \frac{m_H(w) - m_0(w) - 1}{m_0(w) + 2};$ again, this should be at most $\left|\frac{\chi(X, \Omega^2_{X/K})}{\chi(X, \mathcal{O}_X)}\right|.$
\par This time, observe that $\binom{m_H(w-1)}{2} \geq \frac{m_H(w-1)^2}{4},$ since $m_H(w-1) \geq 2$ by assumption. Furthermore, note that $m_H(w) - m_0(w) \geq 2,$ by Lemma \ref{lem: m0 not 1.}. Therefore, the above implies that $\frac{1}{12} \left(m_H(w-1) \frac{m_H(w) - m_0(w)}{m_0(w) + 1}\right)^2 \leq \left|\frac{\chi(X, \Omega^2_{X/K})}{\chi(X, \mathcal{O}_X)}\right|.$ Substituting in our assumption in the statement of the lemma, we would thus require \[\frac{1}{48} \left(\frac{\chi(X, \Omega^1_{X/K})}{\chi(X, \mathcal{O}_X)}\right)^2 \leq \left|\frac{\chi(X, \Omega^2_{X/K})}{\chi(X, \mathcal{O}_X)}\right|.\] But this contradicts Lemma \ref{lem: the core q=2 inequality} again.
\par Therefore, we have that $m_H(w-1) = 1.$ In this case, we will now show that the other terms in the $q=2$ equation are too small. Specifically, we can look at the equation, divided by the $q = 0$ equation (and recalling that $m_H(w-1)$ term is assumed to be zero here):
\begin{align*} \left|\frac{\chi(X, \Omega^2_{X/K})}{\chi(X, \mathcal{O}_X)}\right|  & = m_H(w-2) \frac{m_H(w) - m_0(w)}{m_0(w) + 1} + m_H(w+2)\frac{m_0(w)}{m_H(w) - m_0(w) + 1} + m_H(w-1)m_H(w+1) \\ & + \frac{m_H(w+1)(m_H(w) - 1)}{2}\frac{m_0(w)(m_0(w) - 1)}{(m_H(w) - m_0(w) + 1)(m_H(w) - m_0(w) + 2)}.\end{align*}
\par To prove this, we follow the arguments of \cite[Lemma B.13]{ls25}, \cite[Lemma B.14]{ls25}. For the first term, we use the simple bound that $m_H(w-2) \leq \frac{n-r}{2}$ from Lemma \ref{lem: basic summation for mH}; hence, this term is bounded above by $\frac{n-r}{2} \left|\frac{\chi(X, \Omega^1_{X/K})}{\chi(X, \mathcal{O}_X)}\right|.$ For the second term, we can look at the $q = 5$ equation, divided by the $q = 0$ equation. One of the terms arises from the function $m_0(w) + 2[w+2] - [w] - [w-1],$ yielding the term $$\frac{m_H(w+2)(m_H(w+2) - 1)m_H(w-1)}{2} \frac{m_0(w)}{m_H(w) - m_0(w) + 1};$$ this must be at most $\left|\frac{\chi(X, \Omega^5_{X/K})}{\chi(X, \mathcal{O}_X)}\right|.$ Multiplying by the term $m_H(w-1) \frac{m_H(w) - m_0(w)}{m_0(w) + 1} \leq \frac{\chi(X, \Omega_{X/K}^1)}{\chi(X, \mathcal{O}_X)},$ since we are assuming $m_H(w-1) = 1,$ we get the inequality $$\frac{m_H(w+2)(m_H(w+2) - 1)}{2} \frac{m_0(w)(m_H(w) - m_0(w))}{(m_0(w) + 1)(m_H(w) - m_0(w) + 1)} \leq \left|\frac{\chi(X, \Omega_{X/K}^5)\chi(X, \Omega_{X/K}^1)}{\chi(X, \mathcal{O}_{X})^2}\right|.$$ We see that $m_H(w) - m_0(w) \geq 2$ from the above, so this second fraction is bounded from below by $\frac{4}{9},$ as we know that $m_0(w) \geq 2.$
\par Therefore, we have that $$m_H(w+2)(m_H(w+2) - 1) \leq \left|\frac{9\chi(X, \Omega_{X/K}^5)\chi(X, \Omega_{X/K}^1)}{2\chi(X, \mathcal{O}_{X})^2}\right|.$$ By the same logic, but now looking at $q = 3,$ we have that $$m_H(w+1)(m_H(w+1) - 1) \leq \left|\frac{9\chi(X, \Omega_{X/K}^3)\chi(X, \Omega_{X/K}^1)}{2\chi(X, \mathcal{O}_{X})^2}\right|,$$ giving us control over the fourth term.
\par We now combine these together. Recall that, if $m_H(w-1) = 1,$ then $\frac{m_H(w) - m_0(w)}{m_0(w) + 1} \geq \left|\frac{\chi(X, \Omega^1_{X/K})}{2\chi(X, \mathcal{O}_X)}\right|,$ and thus $\frac{m_0(w)}{m_H(w) - m_0(w) + 1}, \frac{m_0(w) - 1}{m_H(w) - m_0(w) + 2} \leq \left|\frac{2\chi(X, \mathcal{O}_X)}{\chi(X, \Omega^1_{X/K})}\right|.$ We then have the inequality \begin{align*}\left|\frac{\chi(X, \Omega^2_{X/K})}{\chi(X, \mathcal{O}_X)}\right| & \leq \frac{n-r}{2} \left|\frac{\chi(X, \Omega^1_{X/K})}{\chi(X, \mathcal{O}_{X})}\right| + \left|\frac{2\chi(X, \mathcal{O}_X)}{\chi(X, \Omega^1_{X/K})}\right|\left(\sqrt{\left|\frac{9\chi(X, \Omega_{X/K}^5)\chi(X, \Omega_{X/K}^1)}{2\chi(X, \mathcal{O}_{X})^2}\right|} + 1\right)\\ & + \sqrt{\left|\frac{9\chi(X, \Omega_{X/K}^3)\chi(X, \Omega_{X/K}^1)}{2\chi(X, \mathcal{O}_{X})^2}\right|} + 1 + \left(\frac{2\chi(X, \mathcal{O}_X)}{\chi(X, \Omega^1_{X/K})}\right)^2 \left(\left|\frac{9\chi(X, \Omega_{X/K}^3)\chi(X, \Omega_{X/K}^1)}{4\chi(X, \mathcal{O}_{X})^2}\right|\right).\end{align*}
\par In other words, since we know that $|\chi(X, \mathcal{O}_X)| < |\chi(X, \Omega^1_{X/K})|,$ we have \begin{align*}\left|\frac{\chi(X, \Omega^2_{X/K})}{\chi(X, \mathcal{O}_X)}\right| & \leq \frac{n-r}{2} \left|\frac{\chi(X, \Omega^1_{X/K})}{\chi(X, \mathcal{O}_{X})}\right| + \left(\sqrt{\left|\frac{18\chi(X, \Omega_{X/K}^5)}{\chi(X, \Omega_{X/K}^1)}\right|}\right) + \sqrt{\left|\frac{9\chi(X, \Omega_{X/K}^3)\chi(X, \Omega_{X/K}^1)}{2\chi(X, \mathcal{O}_{X})^2}\right|} \\ & + \left(\left|\frac{9\chi(X, \Omega_{X/K}^3)}{\chi(X, \Omega_{X/K}^1)}\right|\right) + 3.\end{align*}
We show in Lemma \ref{lem: remaining terms in q=2} that this doesn't hold, which is a contradiction.
\end{proof}
But we know that one of these inequalities must hold. Hence, if Theorem \ref{thm: big monodromy statement} were to be false, we would need to have $w = w'.$
\subsection{The $w = w'$ case}
Our main tool in this case is the ratio of the $q = 1$ and $q = 0$ equations, since knowing $w = w'$ means that, by Lemma \ref{lem: basic summation for mH}, $m_H(w-i) \leq n -r$ for $i \neq 0,$ which gives us significantly more control over the terms. Our argument proceeds very similarly to Lemma \ref{lem: mH(w-1) term big}.
\par In this case, we know that $m_H(w - i) \leq n-r$ for all $i \neq 0$ by Lemma \ref{lem: basic summation for mH}. We now can use the same dichotomy that we used in the $w \neq w'$ case. Namely, we consider which of the two terms on the left-hand side of this equation is bigger:
\[m_H(w-1) \frac{m_H(w) - m_0(w)}{m_0(w) + 1} + m_H(w+1) \frac{m_0(w)}{m_H(w) - m_0(w) + 1} = \left|\frac{\chi(X, \Omega^1_{X/K})}{\chi(X, \mathcal{O}_X)}\right|.\] As we know that $w = w',$ and the equation for Lemma \ref{lem: basic summation for mH} is symmetric, we can without loss of generality assume that the first term is the larger one. Hence, $m_H(w-1) \frac{m_H(w) - m_0(w)}{m_0(w) + 1} \geq \left|\frac{\chi(X, \Omega^1_{X/K})}{2\chi(X, \mathcal{O}_X)}\right|.$
\par Again, we assume first that $m_H(w-1) \geq 2,$ so one of the nonzero terms in the equation for $q = 2$ (divided by the $q=0$ equation) is $$\binom{m_H(w-1)}{2} \frac{m_H(w) - m_0(w)}{m_0(w) + 1} \frac{m_H(w) - m_0(w) - 1}{m_0(w) + 2}.$$ Again, since $m_H(w-1) \geq 2,$ the binomial is at least $m_H(w-1)^2/4.$ On the other hand, we claim that $\frac{m_H(w) - m_0(w) - 1}{m_0(w) + 2} \geq \frac{1}{3}\frac{m_H(w) - m_0(w)}{m_0(w) + 1};$ this follows since $m_0 \geq 2,$ so $\frac{m_0(w) + 1}{m_0(w) + 2} \geq \frac{2}{3}.$ Meanwhile, $m_H(w) - m_0(w) \geq 2,$ using Lemma \ref{lem: m0 not 1.}, and the fact that $m_H(w-1) \leq n-r,$ so we have the inequality $\frac{m_H(w) - m_0(w) - 1}{m_H(w) - m_0(w)} \geq \frac{1}{2},$ yielding the desired inequality. 
\par Combining everything, we obtain $$\frac{1}{12} \left(m_H(w-1) \frac{m_H(w) - m_0(w)}{m_0(w) + 1}\right)^2 \leq \left|\frac{\chi(X, \Omega^2_{X/K})}{\chi(X, \mathcal{O}_X)}\right|$$ to be satisfied, if a solution for $m_H$ exists. But again this cannot be the case by Lemma \ref{lem: the core q=2 inequality}.
\par We can finish by the exact argument as in Lemma \ref{lem: mH(w-1) term big}, since we know that $m_H(w) - m_0(w) \geq 2$ and $m_0(w) \geq 2$ by Lemma \ref{lem: m0 not 1.}. Having covered all the cases, we obtain a contradiction in each one, therefore finishing the proof of Theorem \ref{thm: big monodromy statement}.
\appendix
\section{Appendix: Asymptotic Inequalities}
In this appendix, we will prove the inequalities that we used in the previous section. Note that all of the inequalities featured here hold in the appropriate limit by considering asymptotic behavior. Indeed, in the case where $X$ is the complete intersection of $r$ hypersurfaces which all represent the same ample Neron-Severi class $H,$ we know that $\chi(X, \Omega^q_{X/K}) = \frac{(H)^n}{n!} E_r(n, q).$ But from Lemma \ref{lem: Eulerian asymptotic}, this is roughly $(r + q)^n,$ and so for sufficiently large $n$ all these inequalities should hold by substituting in this asymptotic for the Eulerian numbers. 
\par The main tasks that need to be done in the proofs in this subsection are as follows: to pin down what sufficiently large $n$ means, and to show that this heuristic still holds in more general settings.
\subsection{Inequalities Featured in Lemma \ref{lem: m0 not 1.}}
\begin{lemma}\label{lem: inequality to show mH(w-s) not 0 means s large}
If $X$ is the complete intersection of ample hypersurfaces $H_1, H_2, \ldots, H_r$ in an abelian variety of dimension $n \geq 3r^2 + 100,$ then $|\chi(X, \Omega^s_{X/K})| \geq |\chi(X, \mathcal{O}_X)|\frac{|\chi(X, \mathcal{O}_X)| - 1}{2}$ implies that $s \geq r - 1.$
\end{lemma}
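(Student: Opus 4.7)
The plan is to prove the contrapositive: for $s \leq r - 2$ (which is vacuous unless $r \geq 2$), I show $|\chi(X, \Omega^s_{X/K})| < \tfrac{1}{2}|\chi(X, \mathcal{O}_X)|(|\chi(X, \mathcal{O}_X)| - 1)$. Applying Proposition \ref{prop: expression for cohomology in terms of Eulerian numbers}, I write
\[|\chi(X, \Omega^q_{X/K})| = \sum_{\substack{\epsilon \in F(r,n) \\ |\epsilon| = n}} M_\epsilon \cdot F_\epsilon(q),\]
where $M_\epsilon = \tfrac{1}{n!}\binom{n}{\epsilon(1),\ldots,\epsilon(r)}\prod_i H_i^{\epsilon(i)} \geq 0$ and $F_\epsilon(q) = \sum_{t \in F(r,q),\ |t| = q}\prod_i E(\epsilon(i), t(i))$. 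Since $F_\epsilon(q)$ vanishes whenever some $\epsilon(i) = 0$, and $F_\epsilon(0) = 1$ when all $\epsilon(i) \geq 1$, both Euler characteristics are $M_\epsilon$-weighted sums over the same index set, yielding
\[\frac{|\chi(X, \Omega^s_{X/K})|}{|\chi(X, \mathcal{O}_X)|} \leq \max_\epsilon F_\epsilon(s).\]

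To control $F_\epsilon(s)$, I use $E(m, k) \leq (k+1)^m$ (from the labeling construction in the upper-bound half of Lemma \ref{lem: Eulerian asymptotic} specialized to $r=1$) together with $t(i) \leq s$ to obtain $\prod_i (t(i)+1)^{\epsilon(i)} \leq (s+1)^{\sum \epsilon(i)} = (s+1)^n$; summing over the $\binom{s+r-1}{r-1}$ compositions $t$ of $s$ gives $F_\epsilon(s) \leq \binom{s+r-1}{r-1}(s+1)^n \leq 2^{2r-3}(r-1)^n$ whenever $s \leq r-2$. For the lower bound on $|\chi(X, \mathcal{O}_X)|$, I combine Riemann--Roch on abelian varieties, giving $H_i^n/n! = \chi(A, \mathcal{O}_A(H_i)) \geq 1$ and hence $H_i^n \geq n!$, with the Khovanskii--Teissier inequality iterated to multiple nef classes (see Lazarsfeld, \emph{Positivity in Algebraic Geometry I}, Theorem~1.6.1 and its corollaries) to get $\prod_i H_i^{\epsilon(i)} \geq \prod_i (H_i^n)^{\epsilon(i)/n} \geq n!$ whenever $|\epsilon| = n$. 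This yields
\[|\chi(X, \mathcal{O}_X)| = \sum_{\substack{\epsilon(i) \geq 1 \\ |\epsilon| = n}}\frac{\prod_i H_i^{\epsilon(i)}}{\prod_i \epsilon(i)!} \geq \sum_{\substack{\epsilon(i) \geq 1 \\ |\epsilon|=n}}\binom{n}{\epsilon(1),\ldots,\epsilon(r)} = r!\,S(n,r) \geq r^n - r(r-1)^n,\]
where the last inequality is inclusion--exclusion.

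It then suffices to verify $r^n - r(r-1)^n > 2 \cdot 2^{2r-3}(r-1)^n + 2$ for $n \geq 3r^2 + 100$ and $r \geq 2$; this rearranges to $(r/(r-1))^n > r + 2^{2r-2} + O((r-1)^{-n})$, whose right-hand side is bounded by $2^{2r-1}$ for $r \geq 2$. Using $\ln(r/(r-1)) \geq 1/r$, one has $(r/(r-1))^n \geq e^{n/r} \geq e^{3r + 100/r}$, which comfortably beats $2^{2r-1}$ since $e^3 > 2^4$.

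The main obstacle is the slack in the bound $F_\epsilon(s) \leq \binom{s+r-1}{r-1}(s+1)^n$ when $\epsilon$ is highly non-uniform (for example, concentrated with one $\epsilon(i) = n - r + 1$): in the uniform case $H_i = H$, the tighter estimate $E_r(n,s) \leq \binom{s+r-1}{r-1}(s+r)^n$ of Lemma \ref{lem: Eulerian asymptotic} would allow a merely linear condition $n \gtrsim r$, but for general $H_i$ the cruder Eulerian bound combined with the Khovanskii--Teissier lower bound on mixed intersections is what forces the quadratic scaling $n \geq 3r^2 + 100$.
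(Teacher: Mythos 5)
Your proposal is correct and follows essentially the same route as the paper's proof: expand both Euler characteristics via Proposition \ref{prop: expression for cohomology in terms of Eulerian numbers}, use positivity of the monomial coefficients (Khovanskii--Teissier plus Riemann--Roch giving $\prod_i H_i^{\epsilon(i)} \geq n!$), bound the Eulerian factor by $E(m,k) \leq (k+1)^m$ so the $q=s$ sum is at most roughly $(s+1)^n \leq (r-1)^n$, and beat it with a lower bound of order $r^n$ on $|\chi(X,\mathcal{O}_X)|$. The only differences are cosmetic and in your favor: you keep the $\binom{s+r-1}{r-1}$ count of compositions (which the paper's write-up silently drops) and lower-bound $E_r(n,0) = r!\,S(n,r)$ by inclusion--exclusion rather than invoking Lemma \ref{lem: more useable asymptotic}.
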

\begin{proof}
We expand out the left-hand side and the first term in the product of the right-hand side, using the polynomials given by Proposition \ref{prop: expression for cohomology in terms of Eulerian numbers}. Comparing coefficients for $(H_1^{e_1}H_2^{e_2} \cdots H_r^{e_r}),$ and noting that if one of the $e_i$ is zero then both coefficients are zero, it suffices to prove for each choice of $e_i,$ whose sum is $n$ and where none of them are zero, that \[\frac{|\chi(X, \mathcal{O}_X)| - 1}{2} \geq \sum\limits_{t \in F(r, s), |t| = s} \prod\limits_{i=1}^r E(e_i, t(i))\] for $s \leq r - 2.$
\par However, using Lemma \ref{lem: Eulerian asymptotic}, we note that $E(e_i, t(i)) \leq (t(i) + 1)^{e_i},$ so this is bounded above by $$\sum\limits_{t \in F(r, s), |t| = s} \prod\limits_{i=1}^r (t(i) + 1)^{e_i}.$$ By the AM-GM inequality, $\prod\limits_{i=1}^r (t(i) + 1)^{e_i} \leq \left(1 + \sum\limits_{i=1}^r \frac{e_i}{n} t(i) \right)^n.$ But since the sum of the $t(i)$ is equal to $s,$ and $e_i/n \leq 1,$ this can be bounded from above by $(s+1)^n.$
\par Meanwhile, Proposition \ref{prop: expression for cohomology in terms of Eulerian numbers}, we have that \[|\chi(X, \mathcal{O}_X)| = \frac{1}{n!} \sum\limits_{\epsilon \in F(r, n), |\epsilon| = n} \binom{n}{\epsilon(1), \epsilon(2), \ldots, \epsilon(r)} H_1^{\epsilon(1)} H_2^{\epsilon(2)}\cdots H _r^{\epsilon(r)}.\] However, we know from intersection theory (say, \cite[Theorem 1.6.1]{lazarsfeld}) that $H_1^{\epsilon(1)} H_2^{\epsilon(2)}\cdots H _r^{\epsilon(r)} \geq \prod\limits_{i=1}^r (H_i^n)^{\epsilon(i)/n}.$ This expression on the right is at least $n!$ by Riemann-Roch (again, see \cite[\S 16]{mumford}). Therefore, $\frac{\chi(X, \mathcal{O}_X) - 1}{2}$ is bounded below by $\frac{E_r(n, 0) - 1}{2},$ and so by Lemma \ref{lem: more useable asymptotic}, we see that this is bounded below by $\frac{1}{2}\left(0.6 r^n - 1\right),$ for $n \geq 3r^2 + 100.$ 
\par Thus, since $(s+1)^n$ is increasing in $n,$ it suffices to show that \[(r-1)^n \leq 0.3r^n - \frac{1}{2};\] but we note that for $n \geq 3r^2 + 100$ that $\frac{1}{2} \leq 0.01r^n,$ so it suffices to show that $$(r-1)^n \leq 0.29r^n,$$ or that $\left(1 + \frac{1}{r-1}\right)^n \geq \frac{1}{0.29} \geq 4.$ However, note that $\left(1 + \frac{1}{r-1}\right)^{3r^2 + 100} \geq (1 + \frac{1}{r-1})^{3(r-1)^2} \geq 2^{3(r-1)} \geq 4,$ since $r \geq 2$ and we know that $(1 + \frac{1}{r-1})^{r-1}$ increases as $r$ increases. This is enough to prove the lemma.  
\end{proof}
\begin{lemma}\label{lem: large s means one term in 3s-2 region is too big}
In the same setting as the previous lemma, we have that \[\chi(X, \Omega_{X/K}^{3s-2}) < \frac{\chi(X, \Omega_{X/K}^{s-1})(\chi(X, \Omega_{X/K}^{s-1})-1)}{4}\] if $n \geq 10r^4 + 1000.$ Furthermore, if $X$ arises as the intersection of $r$ hypersurfaces that represent the same Neron-Severi class $H,$ then the inequality holds for the range $n \geq 10r^2 + 1000.$
\end{lemma}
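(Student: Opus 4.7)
The plan is to bound both sides of the inequality via the explicit formulas of Proposition \ref{prop: expression for cohomology in terms of Eulerian numbers}, exploiting an exponential gap between $E_r(n, s-1)^2$ and $E_r(n, 3s-2)$ that opens up for large $n$. Since $\chi(X, \Omega^{3s-2}_{X/K})$ vanishes when $3s - 2 > n - r$ and $s \geq r - 1$ by the previous lemma, I may restrict to $r - 1 \leq s \leq (n-r+2)/3$; in particular $s - 1 \leq (n-r-1)/3 < (n-r)/2$.

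In the same-Neron-Severi-class case, substituting $|\chi(X, \Omega^q_{X/K})| = (H^n/n!)\, E_r(n, q)$ from the remark after Proposition \ref{prop: expression for cohomology in terms of Eulerian numbers} together with $(H^n)/n! \geq 1$ (Riemann-Roch plus ampleness), the claim reduces to
\[
E_r(n, 3s-2) < \tfrac{1}{4}\, E_r(n, s-1)\bigl(E_r(n, s-1) - 1\bigr).
\]
First I would upper-bound the LHS by $\binom{3s+r-3}{r-1}(3s+r-2)^n$ using Lemma \ref{lem: Eulerian asymptotic}. For the lower bound on $E_r(n, s-1)$, I would apply Lemma \ref{lem: more useable asymptotic} directly when either of its two cases is available; otherwise, log-concavity together with the symmetry $E_r(n, k) = E_r(n, n-r-k)$ forces $E_r(n, \cdot)$ to be non-decreasing on $[0, \lfloor(n-r)/2\rfloor]$, and since $s-1$ lies in this range, I can replace $s-1$ by the largest index $M^* \leq n/(\ln(n+1)+1) - r + 1$ at which Lemma \ref{lem: more useable asymptotic} is valid. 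The resulting ratio $E_r(n, s-1)^2/E_r(n, 3s-2)$ is bounded below by a polynomial-in-$n$ prefactor times $\alpha^n$, where $\alpha = (\min(s-1,M^*)+r)^2/(3s+r-2)$. A direct calculation shows $(s+r-1)^2 \geq (4/3)(3s+r-2)$ uniformly for $s \geq r-1,\ r \geq 2$, with equality only at $s=1,\ r=2$, so $\alpha \geq 4/3$ and the ratio exceeds $8$ by a huge margin once $n \geq 10r^2 + 1000$.

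The mixed-class case is more delicate because $|\chi(X, \Omega^{s-1}_{X/K})|^2$ does not decompose as a single sum over $\epsilon$, precluding a clean term-by-term comparison. I would instead invoke the Khovanskii--Teissier inequality $H_1^{e_1}\cdots H_r^{e_r} \geq \prod_i (H_i^n)^{e_i/n}$ together with Riemann-Roch (giving each $H_i^n \geq n!$) to obtain $H^\epsilon \geq n!$ uniformly, producing the $H$-independent lower bound $|\chi(X, \Omega^q_{X/K})| \geq E_r(n, q)$. For the upper bound on $|\chi(X, \Omega^{3s-2}_{X/K})|$, I would control each individual summand $\frac{1}{n!}\binom{n}{\epsilon(1),\ldots,\epsilon(r)}H^\epsilon f_\epsilon(3s-2)$, where $f_\epsilon(q) = \sum_{|t|=q}\prod_i E(\epsilon(i), t(i))$, by applying the $r=1$ case of Lemma \ref{lem: Eulerian asymptotic} block-by-block to get $f_\epsilon(q) \leq \binom{q+r-1}{r-1}(q+1)^n$, then bundle this with the identity $\sum_\epsilon \binom{n}{\epsilon}H^\epsilon = (H_1+\cdots+H_r)^n$. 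Tracking these factors through reduces the problem to a strengthened Eulerian inequality $E_r(n, s-1)^2 \geq K_r \cdot E_r(n, 3s-2)$ for a constant $K_r$ polynomial in $n$ and exponential in $r$; the exponential gap $(4/3)^n$ absorbs $K_r$ once $n$ grows quartically in $r$, yielding the $10r^4 + 1000$ threshold.

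The hard part will be the mixed-class case: the intersection numbers $H^\epsilon$ can vary widely across $\epsilon$, so bundling them against the Eulerian data requires a careful averaging argument rather than a clean coefficient-by-coefficient comparison in the style of Lemma \ref{lem: inequality to show mH(w-s) not 0 means s large}. The quartic exponent $r^4$ arises from compounding a quadratic-in-$r$ loss from the non-uniformity of $H^\epsilon$ across $\epsilon$ with a further quadratic loss from distributing $s-1$ descents across the $r$ coordinates. The sharp subcase throughout is $s = r-1$, where $\alpha$ takes its minimum value $4/3$ (at $r=2$) and every polynomial prefactor must be controlled tightly against $(4/3)^n$.
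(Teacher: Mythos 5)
Your self-intersection argument follows the paper closely: both upper-bound $E_r(n,3s-2)$ via Lemma \ref{lem: Eulerian asymptotic}, both exploit the identity $(s+r-1)^2 - (4/3)(3s+r-2) = (s+r-3)^2 + (8/3)(r-2) \geq 0$ to get base $\geq 4/3$, and both handle the regime where Lemma \ref{lem: more useable asymptotic} doesn't apply directly by passing to a smaller index $M^*$ via log-concavity/monotonicity. That part is fine (though you should be aware the paper additionally falls back to the crude bound $E_r(n,3s-2) \leq n!\binom{n-1}{r-1}$ when $3s-2$ is large, since $(3s+r-2)^n$ vastly overshoots there; your $\alpha^n$ still beats the polynomial prefactors, but it's worth verifying).

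The mixed-class case has a genuine gap. You propose to lower-bound $|\chi(X,\Omega^{s-1})|$ by the $H$-independent quantity $E_r(n,s-1)$, and to upper-bound $|\chi(X,\Omega^{3s-2})|$ by $\frac{1}{n!}\binom{3s+r-3}{r-1}(3s-1)^n(H_1+\cdots+H_r)^n$ via bundling against $\sum_\epsilon \binom{n}{\epsilon}H^\epsilon = (H_1+\cdots+H_r)^n$. But these two sides scale differently in the $H_i$: the lower bound on $|\chi(X,\Omega^{s-1})|^2$ is a fixed number, while the upper bound on $|\chi(X,\Omega^{3s-2})|$ contains $(H_1+\cdots+H_r)^n/n!$, which can be made arbitrarily large by replacing each $H_i$ with a large multiple. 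So no purely combinatorial inequality of the form $E_r(n,s-1)^2 \geq K_r\, E_r(n,3s-2)$ can close the argument. What the paper does instead is replace only \emph{one} factor of $|\chi(X,\Omega^{s-1})|$ by $E_r(n,s-1)$, i.e.\ it uses
\[
|\chi(X,\Omega^{s-1})|^2 \;\geq\; E_r(n,s-1)\cdot|\chi(X,\Omega^{s-1})|,
\]
and then compares the two linear-in-$H^\epsilon$ expressions $\frac{E_r(n,s-1)}{8}\,|\chi(X,\Omega^{s-1})|$ and $|\chi(X,\Omega^{3s-2})|$ \emph{coefficient by coefficient}: it suffices to show $\frac{E_r(n,s-1)}{8}\,f_\epsilon(s-1) \geq f_\epsilon(3s-2)$ for every weight vector $\epsilon$, where $f_\epsilon(q) = \sum_{|t|=q}\prod_i E(\epsilon_i,t_i)$. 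Because the $H$-dependent prefactor $\binom{n}{\epsilon}H^\epsilon$ is now the same on both sides, it drops out entirely, and the problem is genuinely reduced to an $H$-free inequality. This coefficient-wise trick is the essential idea your proposal is missing; without it the $H$-dependence never cancels. You should also expect to need dedicated arguments for small $s$ (the paper treats $s\leq 3$ separately, with a rather involved analysis at $s=1,\ r=2$ using log-concavity of the intersection-number sequence from Lazarsfeld), as the generic AM-GM bounds are too weak there.
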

\begin{proof}
We first handle the self-intersection case, where we know from the discussion after Proposition \ref{prop: expression for cohomology in terms of Eulerian numbers} that $\chi(X, \Omega_{X/K}^q) = (H^n)/n! E_r(n, q).$ Hence, it suffices to prove that \[E_r(n, 3s-2) < \frac{E_r(n, s-1)(E_r(n, s-1) - 1)}{4}.\] Just as in \cite[Lemma C.4]{ls25}, we split this up into two cases.
\par First, suppose that $s \leq \frac{n}{\ln (n+1) + 1} - r.$ By Lemma \ref{lem: more useable asymptotic}, and since $0.6\binom{s + r - 1}{r-1} > 1$ for $s \geq r - 1 \geq 1,$ it suffices to prove that \[\binom{3s - 2 + r - 1}{r-1}(3s - 2 + r)^n \leq 0.6^2\binom{s + r - 2}{r-1}^2 \frac{(s-1+r)^n\left((s + r - 1)^n - 1\right)}{4}.\] We start by showing that $$\binom{3s - 2 + r - 1}{r-1} \leq \binom{s + r - 1}{r-1}^2$$ for $s \geq r-1.$ This is equivalent to showing that $\prod\limits_{i=1}^{r-1} (3s - 2 + i)i \leq \prod\limits_{i=1}^{r-1} (s + i)^2.$ Note, however, that this will follow once we show that $(s+i)^2 \geq i(3s - 2 + i),$ or that $s^2 \geq (s-2)i.$ But this follows since $i \leq r - 1 \leq s.$
\par We will now show that $\frac{(s-1+r)^{2n}}{(3s - 2 + r)^n } \geq \frac{200}{9}$ for $n \geq 3r^2 + 100,$ and $s \geq r-1.$ Note that this quotient is equal to $\frac{(s+r-1)^2}{3s - 2 + r}$ raised to the $n$th power. Meanwhile, observe that $$(s + r - 1)^2 - (4/3)(3s - 2 + r) = s^2 + (2r - 6)s + r^2 - (10/3)r + (11/3) = (s + r - 3)^2 + (8/3)r - 16/3 \geq 0,$$ since $r \geq 2.$ But then we see that $$\frac{(s-1+r)^{2n}}{(3s - 2 + r)^n } = (4/3)^n \geq \frac{200}{9}$$ for $n \geq 20,$ and so in particular for $n \geq 3r^2 + 100.$
\par Now, suppose that $s > \frac{n}{\ln (n+1) + 1} - r,$ so therefore $s + r - 1 \geq \frac{n}{\ln (n+1) + 1} - 1.$ If $s \geq (n-r)/2,$ observe that $3s - 2 \geq 3(n-r)/2 - 2 > (n-r)$ (for $n \geq 3r^2 + 100$), so therefore $\chi(X, \Omega^{3s-2}_{X/K}) = 0,$ and the inequality we want to check is satisfied. Otherwise, note from Subsection \ref{subsection: generalized eulerian} that $E_r(n, q) \leq E_r(n, q+1)$ for $q < \frac{n-r}{2}.$ 
\par Note that for $n \geq 100,$ we know that $\ln (n+1) + 2 < \sqrt{n}.$ Therefore, $\sqrt{n} - 1 > \ln (n+1) + 1,$ or that $\sqrt{n} + 1 < \frac{n - 1}{\ln (n+1) + 1)} < \frac{n}{\ln(n+1) + 1}.$ Hence, if $s > \frac{n}{\ln (n+1) + 1} - r,$ then $s > \sqrt{n} - r + 1.$ Now, take some positive integer $s_0$ that lies between $\sqrt{n} - r + 1$ and $\frac{n}{\ln (n+1) + 1} - r.$ Then note that $$\frac{E_r(n, s-1)(E_r(n, s-1) - 1)}{4} \geq \frac{E_r(n, s_0 - 1)^2}{8} \geq \frac{9}{200} \binom{s_0 + r - 2}{r-1}^2(s_0 + r - 1)^{2n} \geq n^n \frac{9}{200} \binom{\sqrt{n} - 1}{{r-1}}^2.$$
\par On the other hand, note that $E_r(n, 3s - 2) \leq n! \binom{n - 1}{r-1},$ since the right-hand side is just the total number of permutations and choice of $r-1$ consecutive positions to ignore. It suffices to show that $$n^n \frac{9}{200} \binom{\sqrt{n} - 1}{{r-1}}^2 \geq \binom{n - 1}{r-1} n!.$$ 
\par We start by considering \[ \frac{\binom{\sqrt{n} - 1}{r-1}^2}{\binom{n-1}{r-1}} = \prod\limits_{i=1}^{r-1} \frac{(\sqrt{n} - i)^2}{(n-i)(r-i)}.\] But for $n \geq 3r^2 + 100,$ we have that $r - 1 \leq \frac{\sqrt{n}}{3},$ so this fraction is bounded below by $\frac{4^{r-1}}{(9(r-1))^{r-1}}.$
\par Meanwhile, by AM-GM, we can check that \[\frac{200}{9} \frac{n!}{n^n} \leq \frac{200}{9}(\frac{n+1}{2n})^n \leq \frac{200}{9} (\frac{101}{200})^n \leq \frac{200}{9} \left(\frac{101}{200}\right)^{3r^2 + 100} \leq \left(\frac{101}{200}\right)^{3r^2}.\]
\par Finally, we check that $$\left(\frac{200}{101}\right)^{3r^2} \geq \left(\frac{9r}{4}\right)^r \geq \left(\frac{9(r-1)}{4}\right)^{r-1},$$ as we can check that $\left(\frac{200}{101}\right)^{3r} \geq \frac{9r}{4}$ for $r \geq 2.$ Combining all of these inequalities is enough to prove the lemma for the self-intersection case.
\par In general, we look at $\frac{E_r(n, s-1) - 1}{4} |\chi(X, \Omega^{s-1}_{X/K})| - |\chi(X, \Omega_{X/K}^{3s-2})|$ and expand it out using the coefficients we found from Proposition \ref{prop: expression for cohomology in terms of Eulerian numbers}. It suffices to show that the coefficient of $(H_1^{e_1} H_2^{e_2} \cdots H_r^{e_r})$ is nonnegative for each tuple of the $e_i,$ since  since we know that $\chi(X,\Omega^{s-1}_{X/K}) \geq E_r(n, s-1)$ (as we argued above). That is, we want to show that \[\frac{E_r(n, s-1) - 1}{4} \sum\limits_{t \in F(r, s-1), |t| = s-1} \prod\limits_{i=1}^r E(e_i, t(i)) \geq \sum\limits_{t \in F(r, 3s-2), |t| = 3s-2} \prod\limits_{i=1}^r E(e_i, t(i)).\] As we know that $E_r(n, s-1) \geq 2,$ it suffices to prove this inequality with $\frac{E_r(n, s-1) - 1}{4}$ replaced by $E_r(n, s-1)/8.$
\par To prove that this inequality holds, we split this up into cases, just as in \cite[Lemma C.4]{ls25} and in the self-intersection case. Again, if $s \geq (n+2)/3$ this is automatic: for a term on the right-hand side to be nonzero, we require $e_i > t(i)$ for each $t,$ or that $n > |t| = 3s - 2.$
\par First, suppose that $s \leq \frac{n}{\ln (n+1) + 1} - r.$ Let $E = \max\limits_{i=1, 2, \ldots, r} e_i.$ Then, observe that \[\sum\limits_{\substack{t \in F(r, 3s - 2) \\ |t| = 3s - 2}} \prod\limits_{i=1}^r E(e_i, t(i)) \leq \sum\limits_{\substack{t \in F(r, 3s - 2) \\ |t| = 3s - 2}} \prod\limits_{i=1}^r (t(i) + 1)^{e_i} \leq \sum\limits_{\substack{t \in F(r, 3s - 2) \\ |t| = 3s - 2}} \left(\frac{\sum\limits_{i=1}^r e_it(i) + n}{n}\right)^{n}.\]
\par But note that this summation is at least $\binom{3s - 3 + r}{r-1}\left(\frac{E}{n}(3s-2) + 1\right)^n,$ since $e_i/n \leq E/n$ and the sum of the $t(i)$ is assumed to be $3s - 2.$ 
\par Next, we know by Lemma \ref{lem: more useable asymptotic} and from earlier arguments that $\chi(X, \Omega^{s-1}_{X/K}) \geq E_r(n, s-1) \geq 0.6 \binom{s + r - 2}{r-1}(s + r - 1)^n.$
\par We now consider analogous bounds for $$\sum\limits_{t \in F(r, s-1), |t| = s-1} \prod\limits_{i=1}^r E(e_i, t(i)).$$ Here, as $s \leq \frac{n}{\ln (n+1) + 1} - r,$ we have that $\frac{sE}{n} \leq \frac{E}{\ln (E+1) + 1} - 1.$ Indeed, we have that $\frac{E}{n} \geq \frac{1}{r},$ so therefore $\frac{sE}{n} \leq \frac{E}{\ln (n+1) + 1} - \frac{rE}{n} \leq \frac{E}{\ln (E+1) + 1} - 1.$
\par However, since $E \geq n/r \geq 10r \geq 20,$ we see that $3 \leq \frac{E}{\ln (E+1) + 1} - 1$ as well. Hence, in particular, we find that, if $s \geq 4,$ then one of the terms in $\sum\limits_{\substack{t \in F(r, s - 1) \\ |t| = s - 1}} \prod\limits_{i=1}^r E(e_i, t(i))$ is at least $0.6(\max(3, \frac{sE}{n}) + 1)^E.$ We will handle the $s \leq 3$ case separately. For now, we note that when $s = 2$ we replace $\max(3, \frac{sE}{n}) + 1$ with $2,$ and $s = 3$ we replace this expression with $3,$ which we will use when we handle these cases later.
\par In the case where $s \geq 4,$ it's enough to show that \[\frac{9}{200} \binom{s + r - 2}{r-1}(s+r-1)^n (\max(3, \frac{sE}{n}) + 1)^E \geq \binom{3s - 3 + r}{r-1} (\frac{E}{n}(3s - 2) + 1)^n.\] Take $n$th roots; we first observe that $$\left(\frac{\binom{3s - 3 + r}{r-1}}{\binom{s + r - 2}{r-1}}\right)^{1/n} = \left(\prod\limits_{i=0}^{r-2} \frac{3s - 3 + r - i}{s + r - 2 - i}\right)^{1/n} \leq \left(\prod\limits_{i=0}^{r-2} \frac{3s - 1}{s}\right)^{1/n} \leq 3^{\frac{r-1}{n}} \leq 3^{\frac{1}{20}} \leq 1.1.$$ Similarly, we find that $\frac{200}{9}^{1/n} \leq \frac{200}{9}^{1/100} \leq 1.04.$
\par Therefore, it suffices to show that \[(s+r-1)(\max(3, sE/n) + 1)^{E/n} \geq 1.144 (\frac{E}{n}(3s - 2) + 1).\] First, as $\frac{E}{n} \geq 1/r$ and $r \geq 2,$ note that \[\frac{3E}{n}(s+r-1) = \frac{E}{n}(3s - 2) + \frac{E}{n}(3r - 1) \geq \frac{E}{n}(3s - 2) + 1,\] or $$\frac{\frac{E}{n}(3s - 2) + 1}{s + r - 1} \leq \frac{3E}{n}.$$ So it suffices to show that $(\max(3, sE/n) + 1)^{E/n} \geq 1.144(\frac{3E}{n}) = 3.432E/n.$ 
\par To conclude, we will show that $4^x > 4^{0.9}x > 3.432x$ for $x \in (0, 1):$ taking $\log_4,$ it suffices to show $x > \log_4 x + 0.9.$ But this follows since $\log_4 x + 0.9$ is concave, so taking the tangent line at $x = \frac{1}{\ln 4}$ yields the line $x + 0.9 - \frac{1}{\ln 4} + \log_4 \frac{1}{\ln 4}.$ This constant, however is negative, so therefore $x > x + 0.9 - \frac{1}{\ln 4} + \log_4 \frac{1}{\ln 4} > \log_4 x + 0.9.$ This is enough to prove the claim when $s \geq 4.$
\par Suppose now that $s > \frac{n}{\ln (n+1) + 1} - r.$ Then, in particular, note that if we take $t_i = \frac{e_i}{\ln(n+1) + 1} - 1,$ we can increase the $t_i$ to $t_i',$ such that $t_i \leq t_i' < \frac{e_i}{2},$ such that $\sum\limits_{i=1}^r t_i' = s-1,$ since $s-1$ is assumed to be at most $\frac{n-1}{3} < n/2 - r.$ Furthermore, we know that $E(n, r)$ is unimodal and symmetric (being log-concave and symmetric).
\par Thus, \[\sum\limits_{t \in F(r, s-1), |t| = s-1} \prod\limits_{i=1}^r E(e_i, t(i)) \geq \prod\limits_{i=1}^r E(e_i, \frac{e_i}{\ln(n+1) + 1} - 1),\] which by Lemma \ref{lem: more useable asymptotic} is bounded below by \[0.6^r \prod\limits_{i=1}^r \left(\frac{e_i}{\ln(n+1) + 1}\right)^{e_i}.\] On the other hand, observe that \[\sum\limits_{t \in F(r, s-1), |t| = 3s-2} \prod\limits_{i=1}^r E(e_i, t(i)) \leq \sum\limits_{t \in F(r, e_i)} \prod\limits_{i=1}^r E(e_i, t(i)) = \prod\limits_{i=1}^r e_i!.\]
\par Hence, to prove the desired inequality, it suffices to show that $\frac{E_r(n, s-1)}{8} \geq \left(\frac{5}{3}\left(\ln(n+1) + 1\right)\right)^n.$ Since $E_r(n, s-1)$ is log-concave and symmetric, the sequence with $s = 1, 2, \ldots, n/2$ is increasing. Taking $s_0$ to be the largest integer smaller than $\frac{n}{\ln (n+1) + 1} - r,$ we see that \[\frac{E_r(n, s-1)}{8} \geq \frac{E_r(n, s_0)}{8} \geq \frac{3}{40} \binom{s_0 + r - 1}{r-1} (s_0 + r)^n\] by Lemma \ref{lem: more useable asymptotic}. So it suffices to prove that $\frac{3}{40} \binom{s_0 + r - 1}{r-1} \geq 1$ and $s_0 + r \geq \frac{5}{3}\left(\ln(n+1) + 1\right).$ The first, since $r \geq 2,$ holds when $s_0 \geq 14.$ The second, meanwhile, holds when $s_0 \geq \frac{5}{3} \left(\ln(n+1) + 1\right).$ 
\par Thus, we want to show that \[\frac{n}{\ln (n+1) + 1} - r > 14, \frac{5}{3} \left(\ln(n+1) + 1\right).\] For the first, we already know that $\ln(n+1) + 2 < \sqrt{n}$ for $n \geq 100,$ so it suffices to show that $\sqrt{n} > r + 14;$ this holds when $n \geq r^2 + 28r + 196,$ so in particular when $n \geq 10r^4 + 1000$). Meanwhile to show that $\frac{n}{\ln (n+1) + 1} - r > \frac{5}{3} \ln(n+1) + 1,$ first observe that $\frac{n}{\ln (n+1) + 1} > \sqrt{n},$ and the derivative of $\sqrt{n}$ is $\frac{1}{2\sqrt{n}}.$ The derivative of the right-hand side, meanwhile, is $\frac{1}{n+1},$ so the left-hand side grows faster when $n \geq 9.$ It suffices to find, for each $r,$ one $n$ for which $\sqrt{n} - r > \frac{5}{3} \ln(n+1) + 1.$
\par We take $n = 10r^4 + 1000;$ in this case, note that $\frac{5}{3}\ln (n+1) + 1 \leq \frac{20}{3}\ln (10r+10) + 1.$ Taking derivatives on both sides, we get $\frac{20r^3}{\sqrt{10r^4 + 1000}} - 1$ on the left and $\frac{20}{3(r+1)};$ the former is larger for $r \geq 2.$ Finally, note that plugging in $r = 2,$ we see that $\sqrt{10r^4 + 1000} - r > 29 > \frac{5}{3} \ln 1161 + 1,$ which is enough.
\par Finally, suppose that $s \leq 3,$ so hence $r = 2, 3,$ or $4.$ We do these cases manually.
\par Suppose that $s = 2:$ then, note that $\frac{E}{n}(3s - 2) + 1 = 4E/n + 1,$ and $s + r - 1 \geq 3,$ so therefore the inequality we want to prove from the first case is $2^{E/n} > 1.144(4E/3n + 1/3);$ it suffices to show that $2^x > 1.55x + 0.4$ for $x \in (0, 1).$ But this follows since the derivative of $2^x$ is increasing in $x,$ equals $2 \ln 2 < 1.55$ at $x = 1,$ and $2^1 = 2 > 1.55\cdot 1 + 0.4.$
\par Meanwhile, for $s = 3,$ we have $\frac{E}{n}(3s - 2) + 1 = \frac{7E}{n} + 1,$ and $s + r - 1 \geq 4,$ so we want to show $3^{E/n} > 1.144(7E/4n + 1/4).$ Note that $3^{0.6}\ln 3 > 2.002 = \frac{7\cdot 1.144}{4},$ so it suffices to show that $3^x > (3^{0.6} \ln 3)x + 0.4.$ But note that this is parallel to the tangent line at $x = 0.6,$ and furthermore $3^{0.6} > 1.93 > 3^{0.6} \ln 3 \cdot 0.6 + 0.4.$ Hence, since $3^x$ is a convex function, we see that $3^x > (3^{0.6} \ln 3)x + 0.4$ for $x \in (0, 1).$ This handles the case for $s = 3.$
\par Finally suppose that $s = 1,$ so we must have $r = 2.$ We have to handle this case differently from the others. In this case, we have that \[\chi(X, \Omega^{s-1}_X) = \chi(X, \mathcal{O}_X) = \sum\limits_{i=1}^{n-1} \frac{1}{i!(n-i)!}H_1^iH_2^{n-i},\] and \[\chi(X, \Omega^{3s-2}_X) = \chi(X, \Omega_X) = \sum\limits_{i=1}^{n-1} \frac{E(i, 1) + E(n-i, 1)}{i!(n-i)!}H_1^iH_2^{n-i}.\] Just as in the previous cases, we will show that \[\chi(X, \Omega_{X/K}^1) < \frac{\chi(X, \mathcal{O}_X)^2}{8}.\] We know from Lemma \ref{lem: Eulerian asymptotic} that $E(i, 1) \leq 2^i,$ so it suffices to show that \[\left(\sum\limits_{i=1}^{n-1} \frac{1}{i!(n-i)!}H_1^iH_2^{n-i}\right)^2 \geq 8 \sum\limits_{i=1}^{n-1} \frac{2^i + 2^{n-i}}{i!(n-i)!}H_1^iH_2^{n-i}.\]
\par To prove this, we will use some facts about intersection numbers of hypersurfaces from \cite{lazarsfeld}. First, observe that from \cite[Example 1.6.4]{lazarsfeld} that the sequence given by $(H_1^aH_2^{n-a}),$ as $a$ goes from $1, 2, \ldots, n-1,$ is log concave. In particular, if $1 \leq a < b \leq \frac{n}{2},$ we have that \[(H_1^aH_2^{n-a})(H_1^{n-a}H_2^a) \leq (H_1^bH_2^{n-b})(H_1^{n-b}H_2^b).\]
\par Hence, $$\sum\limits_{i=1}^{n-1} \frac{1}{i!(n-i)!}H_1^iH_2^{n-i} = \sum\limits_{i=1}^{n-1} \frac{1}{i!(n-i)!}\frac{H_1^iH_2^{n-i} + H_1^{n-i}H_2^i}{2}$$ $$\geq \frac{1}{n!} \sum\limits_{i=1}^{n-1} \binom{n}{i} \sqrt{H_1^iH_2^{n-i} H_1^{n-i}H_2^i} \geq \frac{\sqrt{H_1^4H_2^{n-4} H_1^{n-4}H_2^4}}{n!} \sum\limits_{i=4}^{n-4} \binom{n}{i} = \frac{\sqrt{H_1^4H_2^{n-4} H_1^{n-4}H_2^4}}{n!} \left(2^n - \frac{n^3 + 5n + 6}{3}\right).$$ Furthermore, observe that $$\frac{\sum\limits_{i=1}^{n-1} \frac{2^{n-i} + 2^i}{i!(n-i)!}H_1^iH_2^{n-i}}{\sum\limits_{i=1}^{n-1} \frac{1}{i!(n-i)!}H_1^iH_2^{n-i}} \leq 2^{n-1} + 2,$$ since $2^x$ is convex, so $2^{n-1} + 2 \geq 2^{n-i} + 2^i$ for each $i \in [1, n-1].$ In particular, if $\frac{\sqrt{H_1^4H_2^{n-4} H_1^{n-4}H_2^4}}{n!} \geq 8\frac{2^{n-1} + 2}{2^n - \frac{n^3 + 5n + 6}{3}}$ then we have our desired inequality, since then we will have that \[\sum\limits_{i=1}^{n-1} \frac{1}{i!(n-i)!}H_1^iH_2^{n-i} \geq 8 \frac{\sum\limits_{i=1}^{n-1} \frac{2^i + 2^{n-i}}{i!(n-i)!}H_1^iH_2^{n-i}}{\sum\limits_{i=1}^{n-1} \frac{1}{i!(n-i)!}H_1^iH_2^{n-i}}.\]
\par Otherwise, suppose this is not the case. Rewrite this bound as \[8\frac{2^{n-1} + 2}{2^n - \frac{n^3 + 5n + 6}{3}} = 4 + \frac{\frac{4n^3 + 20n}{3} + 24}{2^n - \frac{n^3 + 5n + 6}{3}}.\] We first claim that for $n \geq 100,$ \[2^n - \frac{n^3 + 5n + 6}{3} \geq 50(\frac{4n^3 + 20n}{3} + 24);\] that is, \[2^n \geq 67n^3 + 335n + 1202.\] This can be checked by observing that this inequality holds when $n = 100,$ then noting that \[\frac{67(n+1)^3 + 335(n+1) + 1202}{67n^3 + 335n + 1202} < 2\] for $n \geq 100.$ Hence, we find that \[8\frac{2^{n-1} + 2}{2^n - \frac{n^3 + 5n + 6}{3}} \leq 4.02 < \sqrt{17}.\] 
\par Furthermore, from Riemann-Roch, we know that $H_1^n, H_2^n$ are at least $n!,$ and $H_1^{n-4}H_2^4 \geq (H_1^n)^{(n-4)/n}(H_2^n)^{4/n} \geq n!.$ Thus, both $H_1^{n-4}H_2^4$ and $H_1^{n-4}H_2^4$ are at least $n!$ and multiply to less than $17(n!)^2,$ which means $\frac{H_1^{n-4}H_2^4}{H_1^4H_2^{n-4}}$ must lie between $\frac{1}{17}$ and $17.$ Since $H_1^{n-i}H_2^i H_1^i H_2^{n-i} \leq H_1^{n-4}H_2^4 H_1^4H_2^{n-4}$ for $i \in \{1, 2, 3\},$ we see more generally that $\frac{H_1^{n-i}H_2^i}{H_1^iH_2^{n-i}} \in [1/17, 17]$ for $i \in \{1, 2, 3, 4\}.$
\par From this, we rewrite our inequality \[\left(\sum\limits_{i=1}^{n-1} \frac{1}{i!(n-i)!}H_1^iH_2^{n-i}\right)^2 \geq 8 \sum\limits_{i=1}^{n-1} \frac{2^i + 2^{n-i}}{i!(n-i)!}H_1^iH_2^{n-i}\] as \[\sum\limits_{i=1}^{n-1} \left(- (2^{i+3} + 2^{n-i+3}) + \sum\limits_{i=1}^{n-1} \frac{1}{i!(n-i)!}H_1^iH_2^{n-i}\right) \frac{H_1^iH_2^{n-i} + H_1^{n-i}H_2^i}{2} \frac{1}{i!(n-i)!} \geq 0.\] To tackle this inequality, first recall from previous arguments in this section and Lemma \ref{lem: Eulerian asymptotic} that \[\sum\limits_{i=1}^{n-1} \frac{1}{i!(n-i)!}H_1^iH_2^{n-i} \geq E_2(n, 0) \geq 2^n - (n+1),\] so in particular notice that $- (2^{i+3} + 2^{n-i+3}) + \sum\limits_{i=1}^{n-1} \frac{1}{i!(n-i)!}H_1^iH_2^{n-i} \geq 0$ if $i \geq 4$ and $i \leq n-4.$ Thus, the only negative terms that could occur are at $i = 1, 2, 3$ (and the corresponding terms by symmetry). We will use the significantly stronger bound that, as $n \geq 10r^2 + 1000 > 1000,$ $2^n - (n+1) \geq 0.99 \cdot 2^n$ (one can check that $n+1 \leq 0.01 \cdot 2^n$ for $n \geq 11$).
\par Furthermore, for each $j \geq 5$ and $i \in \{1, 2, 3, 4\},$ by log concavity we know that $H_1^jH_2^{n-j} \geq (H_1^iH_2^{n-i})^{\frac{n - j - i}{n-2i}} (H_1^{n-i} H_2^i)^{\frac{j - i}{n-2i}}.$ Therefore, it follows that \[H_1^jH_2^{n-j} + H_1^{n-j}H_2^j \geq (H_1^iH_2^{n-i})^{\frac{n - j - i}{n-2i}} (H_1^{n-i} H_2^i)^{\frac{j - i}{n-2i}} + (H_1^iH_2^{n-i})^{\frac{j - i}{n-2i}} (H_1^{n-i} H_2^i)^{\frac{n - j - i}{n-2i}}.\] Letting $M_i$ be the maximum of $H_1^{n-i} H_2^i, H_1^i H_2^{n-i},$ $m_i$ the minimum, we obtain \[M_i^{\frac{n - j - i}{n-2i}} m_i^{\frac{j - i}{n-2i}} + m_i^{\frac{j - i}{n-2i}} M_i^{\frac{n - j - i}{n-2i}} \geq (M_i + m_i) \left(\frac{m_i}{M_i}\right)^{\min(\frac{j-i}{n-2i}, \frac{n-j-i}{n-2i})}.\] But we have argued previously that $\frac{m_i}{M_i} \geq \frac{1}{17}.$ In particular, if we let $M$ be the maximum of $H_1^iH_2^{n-i} + H_1^{n-i}H_2^i$ for $i = 1, 2, 3,$ we find that, for $j \leq n/2,$ \[(H_1^iH_2^{n-i})^{\frac{n - j - i}{n-2i}} (H_1^{n-i} H_2^i)^{\frac{j - i}{n-2i}} \geq \frac{1}{17}^{\frac{j-1}{n-6}}M \geq \frac{1}{17}^{\frac{j-1}{100}}M,\] as we are assuming $n \geq 10r^2 + 1000 > 100.$ Similarly, if $j \geq n/2$ this instead becomes $\frac{1}{17}^{\frac{n-j-1}{100}}M$
\par Hence, analyzing \[\sum\limits_{i=1}^{n-1} \left(- (2^{i+3} + 2^{n-i+3}) + \sum\limits_{i=1}^{n-1} \frac{1}{i!(n-i)!}H_1^iH_2^{n-i}\right) \frac{H_1^iH_2^{n-i} + H_1^{n-i}H_2^i}{2} \frac{1}{i!(n-i)!},\] we see that the magnitude of the negative terms on the left-hand side are bounded above by $M(2^{n+2} + 2^{n+1} + 2^n + 8 + 16 + 32) \leq 8M\cdot 2^n.$ On the other hand, we see that taking $j$ between $4$ and $43$ (as well as the same range for $n-j$), and recalling that \[\sum\limits_{i=1}^{n-1} \frac{1}{i!(n-i)!}H_1^iH_2^{n-i} \geq 0.99 \cdot 2^n,\] we obtain terms bounded below by $$\frac{1}{17}^{0.42} M\left(40 \cdot 0.99\cdot2^n -  \left(\sum\limits_{j=4}^{40} 2^{n-j+3} + 2^{j + 3}\right)\right) \geq M(12 \cdot 2^n - 2^n - 2^{44}) \geq 10M\cdot 2^n,$$ which is larger. In particular, this means that \[\sum\limits_{i=1}^{n-1} \left(- (2^{i+3} + 2^{n-i+3}) + \sum\limits_{i=1}^{n-1} \frac{1}{i!(n-i)!}H_1^iH_2^{n-i}\right) \frac{H_1^iH_2^{n-i} + H_1^{n-i}H_2^i}{2} \frac{1}{i!(n-i)!} \geq 0,\] which is what we wanted to show.
\end{proof}
\subsection{Other Inequalities}
For the inequalities in this subsection, we will first prove the inequality for the case where $X$ is given by the intersection of $r$ hypersurfaces all representing the Neron-Severi class $H,$ before going to the slightly more general setting. This will let us obtain the wider ranges in this first self-intersection case. In addition, the spirit of the argument is captured by the first with fewer technical obstructions.
\begin{lemma}\label{lem: the respective ratios are big enough}
For the ranges of $n$ given in Theorem \ref{thm: big monodromy statement}, we have that $\frac{|\chi(X, \Omega^1_{X/K})|}{|\chi(X, \mathcal{O}_X)|} > 16.$
\end{lemma}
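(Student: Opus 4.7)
The plan is to use Proposition \ref{prop: expression for cohomology in terms of Eulerian numbers} to reduce the ratio inequality to a pointwise bound on the indexing functions $\epsilon$. Writing $c_\epsilon := \binom{n}{\epsilon(1), \ldots, \epsilon(r)} H_1^{\epsilon(1)} \cdots H_r^{\epsilon(r)}$, which is positive by ampleness of the $H_i$, we have
\[
|\chi(X, \Omega^q_{X/K})| = \frac{1}{n!} \sum_{\substack{\epsilon \in F(r,n) \\ |\epsilon|=n}} c_\epsilon \sum_{\substack{t \in F(r,q) \\ |t|=q}} \prod_{i=1}^r E(\epsilon(i), t(i)).
\]
Using $E(m, 0) = 1$ for $m \geq 1$ and $E(0, \cdot) = 0$, the Eulerian factor for $q = 0$ equals $1$ when every $\epsilon(i) \geq 1$ and vanishes otherwise. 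For $q = 1$, the only $t$ with $|t| = 1$ are indicators of single indices, so the factor is $\sum_j E(\epsilon(j), 1) = \sum_j (2^{\epsilon(j)} - \epsilon(j) - 1)$ whenever all $\epsilon(i) \geq 1$ (and vanishes otherwise), invoking the formula $E(m, 1) = 2^m - m - 1$, which is immediate from Proposition \ref{prop: alternating summation formula for Eulerian numbers}.

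Thus $|\chi(X, \mathcal{O}_X)|$ and $|\chi(X, \Omega^1_{X/K})|$ are both positive sums supported on the same index set $\{\epsilon : |\epsilon| = n,\ \epsilon(i) \geq 1\}$ with common positive weights $c_\epsilon/n!$. Consequently, to establish the ratio inequality it suffices to prove the pointwise bound
\[
\sum_{j=1}^r 2^{\epsilon(j)} > n + r + 16
\]
for every such $\epsilon$. Since $\sum_j \epsilon(j) = n$, pigeonhole gives $\max_j \epsilon(j) \geq n/r$, and therefore $\sum_j 2^{\epsilon(j)} \geq 2^{n/r}$, reducing the problem to verifying $2^{n/r} > n + r + 16$ throughout the ranges of Theorem \ref{thm: big monodromy statement}.

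Both stated ranges satisfy $n \geq 10r^2 + 1000$, so the argument is uniform across them (equal or distinct $d_i$), since the pointwise reduction depends only on $r$ and $n$. At the smallest admissible $n$, we have $n/r \geq 10r + 1000/r \geq 520$ (using $r \geq 2$), which dwarfs $\log_2(n + r + 16) \leq \log_2(11r^2 + 1016)$; and $n/r - \log_2(n + r + 16)$ is increasing in $n$ on this range, since its derivative $1/r - 1/((n+r+16)\ln 2)$ is positive whenever $n + r + 16 > r/\ln 2$, which holds with enormous room to spare. The only real work is unpacking the Eulerian factors as above; the numerical inequality itself is extremely slack, so no genuine obstacle is expected.
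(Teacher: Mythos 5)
Your proposal is correct and follows essentially the same route as the paper: expand both Euler characteristics via Proposition \ref{prop: expression for cohomology in terms of Eulerian numbers}, compare coefficients of each monomial $H_1^{\epsilon(1)}\cdots H_r^{\epsilon(r)}$ (positive by ampleness), reduce to a pointwise inequality in Eulerian numbers $E(\epsilon(i),1)$, and finish by pigeonhole on $\max_i \epsilon(i) \geq n/r$ together with a trivially slack numerical check. The only cosmetic differences are that you use the exact identity $E(m,1)=2^m-m-1$ where the paper quotes the lower bound $E(m,1)\geq 2^m-(m+1)$ from Lemma \ref{lem: Eulerian asymptotic}, and you aggregate the $-\epsilon(j)-1$ terms over all $j$ rather than isolating the single large index; both variants close without difficulty.
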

\begin{proof}
We compare coefficients and show that $|\chi(X, \Omega^1_{X/K})| - 16 |\chi(X, \mathcal{O}_X)| > 0.$ To show this, expanding these out using Proposition \ref{prop: expression for cohomology in terms of Eulerian numbers}, we see that the coefficient of $(H_1^{e_1}\cdots H_r^{e_R})$ is given by \[\frac{1}{n!} \binom{n}{e_1, e_2,\ldots, e_r} \left(-16 + \sum\limits_{i=1}^r E(e_i, 1) \right);\] it thus suffices to show for each choice of $e_i$ that $-16 + \sum\limits_{i=1}^r E(e_i, 1)  > 0.$ But to check this, since the sum of the $e_i$ is $n,$ one of the $e_i$ is at least $10r.$ But we know from Lemma \ref{lem: Eulerian asymptotic} that $E(e_i, 1) \geq 2^{e_i} - (e_i+1),$ which for $e_i \geq 10r \geq 6$ is at least $64 - 7 = 57 > 16,$ so we are done.
\end{proof}
\begin{lemma}\label{lem: the core q=2 inequality}
For the ranges of $n$ given in Theorem \ref{thm: big monodromy statement}, we have \[\frac{1}{48} \left(\frac{\chi(X, \Omega^1_{X/K})}{\chi(X, \mathcal{O}_X)}\right)^2 > \frac{\chi(X, \Omega^2_{X/K})}{\chi(X, \mathcal{O}_X)}.\]
\end{lemma}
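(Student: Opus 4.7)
The inequality is equivalent to $|\chi(X,\Omega^1_{X/K})|^2 > 48\,|\chi(X,\mathcal{O}_X)|\cdot|\chi(X,\Omega^2_{X/K})|$, since by Proposition \ref{prop: expression for cohomology in terms of Eulerian numbers} the sign of $\chi(X,\Omega^q_{X/K})$ is $(-1)^{n-q-r}$, so $\chi^0\chi^2$ is positive of the same sign as $(\chi^1)^2$. The plan is to first handle the self-intersection case (where the inequality reduces cleanly to a statement about generalized Eulerian numbers) and then bootstrap the general case by a coefficient-wise comparison, as is standard elsewhere in this appendix.

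For the self-intersection case (all $H_i=H$), Proposition \ref{prop: expression for cohomology in terms of Eulerian numbers} gives $|\chi(X,\Omega^q_{X/K})|=\frac{H^n}{n!}E_r(n,q)$, reducing the inequality to $E_r(n,1)^2 > 48\,E_r(n,0)\,E_r(n,2)$. I would apply Lemma \ref{lem: more useable asymptotic} to get $E_r(n,1)\ge 0.6\,r(r+1)^n$ (using that $n\ge 10r^2+1000\ge 3r^2+100$ and $1\le 4r$), and Lemma \ref{lem: Eulerian asymptotic} to get $E_r(n,0)\le r^n$ and $E_r(n,2)\le\binom{r+1}{2}(r+2)^n$. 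This yields
\[
\frac{E_r(n,1)^2}{48\,E_r(n,0)E_r(n,2)} \;\ge\; \frac{0.015\,r}{r+1}\left(\frac{(r+1)^2}{r(r+2)}\right)^n \;=\; \frac{0.015\,r}{r+1}\left(1+\frac{1}{r(r+2)}\right)^n.
\]
Using $\ln(1+x)\ge x/2$ for $x\le 1/8$, the exponential factor is at least $\exp(n/(2r(r+2)))$. For $n\ge 10r^2+1000$ this exceeds $\exp(5)>148$, easily beating the $(r+1)/(0.015r)\le 67(r+1)/r$ required (for $r\ge 2$ it suffices once $n\ge 9.5\,r(r+2)$, which holds comfortably).

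For the general case, I would expand both $(\chi^1)^2$ and $\chi^0\chi^2$ using Proposition \ref{prop: euler characteristics}, writing each as $\frac{1}{(n!)^2}\sum C_\epsilon C_{\epsilon'}(\cdot)$ where $C_\epsilon=\binom{n}{\epsilon}\prod H_i^{\epsilon(i)}\ge 0$, and collecting coefficients at each monomial $\prod H_i^{\mu(i)}$ with $|\mu|=2n$ and $\mu(i)\ge 2$. Setting $c(\epsilon,q)=\sum_{|t|=q}\prod_i E(\epsilon(i),t(i))$, symmetrizing the RHS in $\epsilon\leftrightarrow\epsilon'$, and using that the intersection numbers $\prod H_i^{\mu(i)}$ are positive, it suffices to show
\[
\sum_{\substack{\epsilon+\epsilon'=\mu\\ |\epsilon|=n,\ \epsilon(i),\epsilon'(i)\ge 1}} \binom{n}{\epsilon}\binom{n}{\epsilon'}\bigl[c(\epsilon,1)c(\epsilon',1) - 24\bigl(c(\epsilon,2)+c(\epsilon',2)\bigr)\bigr] \;\ge\; 0
\]
for every admissible $\mu$. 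The self-intersection case is the instance where $\mu(i)$ is uniform (up to reindexing the sum), so the bound of Step 1 already covers the ``generic'' shape, and for other $\mu$ the same bounds from Lemmas \ref{lem: Eulerian asymptotic} and \ref{lem: more useable asymptotic}, applied to each $c(\epsilon,q)$ with $\epsilon^*=\max_i\epsilon(i)\ge n/r$, give the analogous comparison.

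The main obstacle is that a pointwise inequality (for each individual $(\epsilon,\epsilon')$) is too strong: for a ``balanced'' $\epsilon$ with all $\epsilon(i)\approx n/r$, one checks $c(\epsilon,1)^2\approx r^2\cdot 4^{n/r}$ while $c(\epsilon,2)\approx\binom{r}{2}\cdot 4^{n/r}$, so the ratio is $O(1/r)$ rather than $\gg 48$. The argument therefore must be aggregated over $\epsilon+\epsilon'=\mu$, invoking the fact that the sum is dominated by ``skewed'' $\epsilon$ (with one large coordinate), where the ratio $c(\epsilon,1)^2/c(\epsilon,2)\approx (4/3)^{\epsilon^*}$ is indeed the same exponentially large quantity as in Step 1. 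Carrying out this comparison explicitly, case by case on the shape of $\mu$ (one dominant $\mu(i)$ versus several), using the same log-concavity estimates that powered Lemma \ref{lem: inequality to show mH(w-s) not 0 means s large} and Lemma \ref{lem: large s means one term in 3s-2 region is too big}, is the technical heart of the proof and is where the hypothesis $n\ge 10r^4+1000$ is consumed.
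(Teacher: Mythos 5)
Your self-intersection step is correct and is essentially the paper's own argument: the paper likewise reduces to $E_r(n,1)^2$ versus $48\,E_r(n,0)E_r(n,2)$ via Lemma \ref{lem: more useable asymptotic} and Lemma \ref{lem: Eulerian asymptotic} and ends up verifying $\left(1+\tfrac{1}{r^2+2r}\right)^n > \tfrac{500}{3}$, which matches your computation. The problem is the general case, which in this lemma means hypersurfaces representing $d_1H,\ldots,d_rH$ with the $d_i$ not all equal (the hypotheses of Theorem \ref{thm: big monodromy statement} already put all classes on a single ample $H$). There your proposal is a program, not a proof: you reduce to a coefficient-wise inequality in $\mu=\epsilon+\epsilon'$ (a sufficient but not necessary condition), observe correctly that the pointwise version fails for balanced $(\epsilon,\epsilon')$ since $c(\epsilon,1)c(\epsilon',1)\approx 2\,c(\epsilon,2)\cdot 24/24$ is nowhere near a factor $48$, and then defer the essential step --- that after aggregating over $\epsilon+\epsilon'=\mu$ the skewed terms dominate --- to an unexecuted ``case by case on the shape of $\mu$'' analysis. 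That step is genuinely delicate (for balanced $\mu$ the admissible $\epsilon$ are confined to the box $\epsilon\le\mu$, so the very skewed $\epsilon$ with $\epsilon^*\approx n$ you appeal to need not exist, and deciding the sign of the aggregated coefficient becomes a multivariate entropy optimization in which the competing exponential rates are close), no constant tracking is done, and nothing in the sketch extracts the stated threshold $n\ge 10r^4+1000$. So as written the lemma is only proved when all $d_i$ are equal.

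The paper avoids this entirely by exploiting the single-class hypothesis instead of comparing monomial coefficients. Writing $D=d_1+\cdots+d_r$ with $d_1\le\cdots\le d_r$, it proves the global bounds
\[
|\chi(X,\mathcal{O}_X)|\le \frac{H^n}{(n-r)!}d_1\cdots d_r\,D^{n-r},\qquad
|\chi(X,\Omega^2_{X/K})|\le \frac{H^n}{(n-r)!}d_1\cdots d_r\Bigl(3\sum_i (D+2d_i)^{n-r}+4\sum_{i<j}(D+d_i+d_j)^{n-r}\Bigr),
\]
and, using $\sum_i E(e(i),1)\ge 0.6\cdot 2^{e(r)}$ together with $\prod_i e(i)\le (n/r)^r$, the lower bound $|\chi(X,\Omega^1_{X/K})|\ge \frac{1.2\,r^r}{n^r}\frac{H^n}{(n-r)!}d_1\cdots d_r\,(D+d_r)^{n-r}$. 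The deliberate loss of the factor $(r/n)^r$ is what makes the bounds uniform in the $d_i$, and it is exactly where the hypothesis $n\ge 10r^4+1000$ is consumed: the lemma then reduces to the elementary inequality $\bigl(1+\tfrac{1}{r^2+2r}\bigr)^{n-r}\ge 40\,\tfrac{2r^2+r}{r^{2r}}\,n^{2r}$, checked directly. If you want to complete your route you must actually carry out the aggregated skewed-versus-balanced comparison with explicit constants; otherwise you should switch to global bounds of the paper's type for the unequal-$d_i$ case.
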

\begin{proof}
We start with the case that $X$ is the intersection of $r$ representatives of the same Neron-Severi class. To start, begin by employing Lemma \ref{lem: more useable asymptotic}. We then have that $\chi(X, \Omega^1_{X/K})/\chi(X, \mathcal{O}_X) > 0.6 r \left( 1 + \frac{1}{r}\right)^n$ and $\chi(X, \Omega^2_{X/K})/\chi(X, \mathcal{O}_X) < \frac{5}{3} \frac{(r+1)r}{2} \left(1 + \frac{2}{r} \right)^n.$ 
\par In other words, it thus suffices to show that \[\frac{1}{48} r^2 \left( 1 + \frac{1}{r}\right)^{2n} > \frac{125}{27} \frac{(r+1)r}{2} \left(1 + \frac{2}{r}\right)^n,\] or that \[\frac{r}{(r+1)} \left(1 + \frac{1}{r^2 + 2r}\right)^n > \frac{1000}{9}.\] Finally, using the fact that we have $r \geq 2,$ it suffices to show that $\left(1 + \frac{1}{r^2 + 2r}\right)^n > \frac{500}{3}.$
\par Now, we know that $r^2 + 2r > 6,$ and that $(1+\frac{1}{x})^x$ is increasing in positive $x;$ therefore, we can bound the left-hand side from below as $2^{\frac{n}{r^2 + 2r}}.$ But with $n > 10r^2 + 1000,$ we observe that $\frac{10r^2 + 1000}{r^2 + 2r} > 9,$ as $r^2 - 18r + 1000 > 0.$ This proves the desired inequality in the self-intersection case, since $2^9 > \frac{500}{3}.$
\par Now, suppose we are in the case where our hypersurfaces represent Neron-Severi classes $d_1H, d_2H, \ldots, d_rH.$ Without loss of generality, suppose that $d_1 \leq d_2 \leq \cdots \leq d_r.$ We start by building a lower bound for $\chi(X, \Omega^1_{X/K})$ and upper bounds for $\chi(X, \mathcal{O}_X), \chi(X, \Omega^2_{X/K}).$
\par First, for the upper bounds, we see that \begin{align*}|\chi(X, \mathcal{O}_X)| & = \frac{H^n}{n!} \sum\limits_{e \in F(r, n), |e| = n} \binom{n}{e(1), e(2), \ldots, e(r)}d_1^{e(1)} \cdots d_r^{e(r)} & = H^n \sum\limits_{e \in F(r, n), |e| = n} \prod\limits_{i=1}^r \frac{d_i^{e(i)}}{e(i)!} \\ &  = H^n \sum\limits_{e \in F(r, n), |e| = n} \prod\limits_{i=1}^r \frac{d_i}{e(i) }\frac{d_i^{e(i) - 1}}{(e(i) - 1)!} & \leq H^n \sum\limits_{e \in F(r, n), |e| = n} \prod\limits_{i=1}^r d_i \frac{d_i^{e(i) - 1}}{(e(i) - 1)!} \\ & = \frac{H^n}{(n-r)!} d_1d_2 \cdots d_r (d_1 + d_2 + \cdots + d_r)^{n-r},\end{align*} and \begin{align*}|\chi(X, \Omega^2_{X/K})| & = \frac{H^n}{n!} \sum\limits_{e \in F(r, n), |e| = n} \left(\sum\limits_{i=1}^r E(e(i), 2) + \sum\limits_{1 \leq i < j \leq r} E(e(i), 1)E(e(j), 1)\right)\binom{n}{e(1), e(2), \ldots, e(r)}d_1^{e(1)} \cdots d_r^{e(r)} \\ & = H^n \sum\limits_{e \in F(r, n), |e| = n} \left(\sum\limits_{i=1}^r E(e(i), 2) + \sum\limits_{1 \leq i < j \leq r} E(e(i), 1)E(e(j), 1)\right)\prod\limits_{i=1}^r \frac{d_i^{e(i)}}{e(i)!} \\ & \leq H^n \sum\limits_{e \in F(r, n), |e| = n} \left(\sum\limits_{i=1}^r 3^{e(i)} + \sum\limits_{1 \leq i < j \leq r} 2^{e(i) + e(j)}\right)\prod\limits_{i=1}^r \frac{d_i}{e(i) }\frac{d_i^{e(i) - 1}}{(e(i) - 1)!} \\ & \leq H^n \sum\limits_{e \in F(r, n), |e| = n} \left(\sum\limits_{i=1}^r 3^{e(i)} + \sum\limits_{1 \leq i < j \leq r} 2^{e(i) + e(j)}\right)\prod\limits_{i=1}^r d_i \frac{d_i^{e(i) - 1}}{(e(i) - 1)!} \\ & = \frac{H^n}{(n-r)!}d_1d_2 \cdots d_r \left(3\sum\limits_{i=1}^r (d_1 + d_2 + \cdots + d_r + 2d_i)^{n-r} + 4\sum\limits_{1 \leq i < j \leq r} (d_1 + d_2 + \cdots + d_r + d_i + d_j)^{n-r}\right).\end{align*}
\par Meanwhile, for the lower bound, we first claim that $$\sum\limits_{i=1}^r E(e(i), 1) \geq 0.6 \cdot 2^{e(r)}.$$ To see this, we use Lemma \ref{lem: Eulerian asymptotic}. Indeed, we know that $E(n, 1) \geq 2^n - (n+1)$ for all $n.$ If $e(r) \geq 4,$ we get the desired bound. Otherwise, $e(r) < 4;$ but then there is some $i$ for which $e(i) \geq 4,$ assuming $n > 10r^4 + 1000 \geq 4r,$ and so $0.6 \cdot 2^{e(r)} \leq 0.6 \cdot 2^{e(i)} \leq E(e(i), 1) \leq \sum\limits_{i=1}^r E(e(i), 1).$ Using this, along with the AM-GM inequality, we can then obtain the lower bound \begin{align*} |\chi(X, \Omega^1_{X/K})| & = \frac{H^n}{n!} \sum\limits_{e \in F(r, n), |e| = n} \left(\sum\limits_{i=1}^r E(e(i), 1)\right)\binom{n}{e(1), e(2), \ldots, e(r)}d_1^{e(1)} \cdots d_r^{e(r)} \\ & = H^n \sum\limits_{e \in F(r, n), |e| = n} \left(\sum\limits_{i=1}^r E(e(i), 1)\right)\prod\limits_{i=1}^r \frac{d_i^{e(i)}}{e(i)!} \\ & \geq H^n \sum\limits_{e \in F(r, n), |e| = n} \left(0.6 \cdot 2^{e(r)}\right)\prod\limits_{i=1}^r \frac{d_i}{e(i) }\frac{d_i^{e(i) - 1}}{(e(i) - 1)!} \\ & \geq H^n \sum\limits_{e \in F(r, n), |e| = n} \left(0.6 \cdot 2^{e(r)}\right)\prod\limits_{i=1}^r \frac{d_i}{(n/r)} \frac{d_i^{e(i) - 1}}{(e(i) - 1)!} \\ & = \frac{H^n}{(n-r)!}\frac{1.2 r^r}{n^r} d_1d_2 \cdots d_r \left((d_1 + d_2 + \cdots + 2d_r)^{n-r}\right).
\end{align*}
\par Using the bounds above, and cancelling out like terms, it thus suffices to show that $$\left(\frac{r^r}{n^r} (d_1 + d_2 + \cdots + 2d_r)^{(n-r)}\right)^2$$ is at least \[40 (d_1 + d_2 + \cdots + d_r)^{n-r} \left(3\sum\limits_{i=1}^r (d_1 + d_2 + \cdots + d_r + 2d_i)^{n-r} + 4\sum\limits_{1 \leq i < j \leq r} (d_1 + d_2 + \cdots + d_r + d_i + d_j)^{n-r}\right);\] this expression can be bounded above by \[40 (d_1 + d_2 + \cdots + d_r)^{n-r} \left(3r (d_1 + d_2 + \cdots + 3d_r )^{n-r} + (2r^2 - 2r) (d_1 + d_2 + \cdots + 3d_r)^{n-r}\right).\]
\par To do this, let $D = d_1 + \cdots + d_r;$ note that $D \leq rd_r.$ Then, observe that $$\frac{(d_1 + d_2 + \cdots + d_{r-1} + 2d_r)^2}{(d_1 + d_2 + \cdots + d_{r-1} + d_r)(d_1 + d_2 + \cdots + d_{r-1} + 3d_r)} = \frac{D^2 + 2Dd_r + d_r^2}{D^2 + 2Dd_r} = 1 + \frac{d_r^2}{D^2 + 2Dd_r} \geq 1 + \frac{1}{r^2 + 2r}.$$
\par Thus, to prove the lemma, it suffices to show that for the ranges of $n$ that we have given, $$\left(1 + \frac{1}{r^2 + 2r}\right)^{n-r} \geq 40\frac{2r^2 + r}{r^{2r}} n^{2r}.$$
\par Again, we observe that $\left(1 + \frac{1}{r^2 + 2r}\right)^{r^2 + 2r} \geq 2,$ so it suffices to show $2^{\frac{n-r}{r^2 + 2r}} \geq 40(2r^2 + r)\left(\frac{n}{r}\right)^{2r}$ when $n \geq 10r^4 + 1000.$ But $\frac{r}{r^2 + 2r} = \frac{1}{r+2} \leq 1/4,$ so as $2^{1/4} \leq \frac{5}{4},$ it suffices to show $2^{\frac{n}{r^2 + 2r}} \geq 50(2r^2 + r)\left(\frac{n}{r}\right)^{2r}$
\par Now, it suffices to show $2^{n/(r^2 + 2r)} \geq\left(\frac{n}{r}\right)^{2r + 2},$ since $50, 2r^2 + r \leq 10r^3 < n/r.$ Furthermore, with $r \geq 2,$ it suffices to prove the inequality with $2r + 2$ replaced with $3r.$ Taking $\log_2,$ this is equivalent to showing that $\frac{n}{r^2 + 2r} \geq 3r \log_2(\frac{n}{r})$ for $n \geq 10r^4 + 1000.$ Letting $x = \frac{n}{r},$ we want to show that $$x > 3r(r+2) \log_2x$$ for $x \geq \frac{10r^4 + 1000}{r}.$ Note that the slope of the right-hand side equals $1$ when $x = \frac{3r(r+2)}{\ln 2} \leq 10r^3,$ and furthermore that $3r(r+2) \log_2 x$ is a concave function. Therefore, if we prove the inequality holds for some $\frac{3r(r+2)}{\ln 2} \leq x < \frac{10r^4 + 1000}{r},$ then it will hold for all larger $x$ (since $3r(r+2) \log_2 x$ has slope less than $1$ for $x \geq \frac{3r(r+2)}{\ln 2}$). 
\par But now at $x = 10r^3,$ we have that $\frac{10r^2}{3(r+2)} > \log_2 10r^3$ for $r \geq 3,$ by using a similar argument to the above and a bit of direct computation. For $r = 2,$ we can check that $x = \frac{10r^4 + 1000}{r}$ satisfies the desired inequality, and so we are done.
\end{proof}
\begin{lemma}\label{lem: remaining terms in q=2}
For the ranges of $n$ given above, we have 
\begin{align*}\frac{|\chi(X, \Omega^2_{X/K})|}{|\chi(X, \mathcal{O}_X)|} & > \frac{n-r}{2} \frac{|\chi(X, \Omega^1_{X/K})|}{|\chi(X, \mathcal{O}_{X})|} + \left(\sqrt{\frac{18|\chi(X, \Omega_{X/K}^5)|}{|\chi(X, \Omega_{X/K}^1)|}}\right) + \sqrt{\frac{9|\chi(X, \Omega_{X/K}^3)||\chi(X, \Omega_{X/K}^1)|}{2|\chi(X, \mathcal{O}_{X})^2|}} \\ & + \left(\frac{9|\chi(X, \Omega_{X/K}^3)|}{|\chi(X, \Omega_{X/K}^1)|}\right) + 3.\end{align*}
\end{lemma}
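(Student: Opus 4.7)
The plan is to dominate the right-hand side term by term. Writing $\chi^q := |\chi(X,\Omega^q_{X/K})|$, it suffices to establish the five auxiliary inequalities
\begin{enumerate}
\item $\chi^2/\chi^1 \geq \tfrac{5(n-r)}{2}$,
\item $(\chi^2)^2 \chi^1 \geq 450\,\chi^5(\chi^0)^2$,
\item $(\chi^2)^2 \geq \tfrac{225}{2}\,\chi^3\chi^1$,
\item $\chi^2\chi^1 \geq 45\,\chi^3\chi^0$,
\item $\chi^2 \geq 15\,\chi^0$,
\end{enumerate}
since each one says that the left-hand side exceeds $5$ times the corresponding summand on the right (giving slack to combine into a strict inequality).

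I first treat the self-intersection case, in which $\chi^q = \tfrac{(H^n)}{n!}E_r(n,q)$, so each of (1)--(5) reduces to a pure Eulerian-number inequality. The two-sided bounds of Lemma \ref{lem: more useable asymptotic} (from below, $E_r(n,q) \geq 0.6\binom{q+r-1}{r-1}(q+r)^n$) and Lemma \ref{lem: Eulerian asymptotic} (from above) turn each of these into an inequality of the shape $p(r)\cdot(A(r)/B(r))^n \geq P(n,r)$ with $A(r)>B(r)$ polynomials in $r$. The governing algebraic identities are $(r+2)^2(r+1) - (r+5)r^2 = 8r+4$ (for (2)), $(r+2)^2 - (r+3)(r+1) = 1$ (for (3)), and $(r+1)(r+2) - r(r+3) = 2$ (for (4)); each exhibits an exponential base exceeding $1$ by $\Omega(1/r^2)$. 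Checking that $n \geq 10r^2+1000$ suffices to absorb the polynomial prefactors proceeds by the same kind of bookkeeping as in Lemma \ref{lem: the core q=2 inequality}.

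For the general case ($n \geq 10r^4 + 1000$ with classes $d_1H, \dots, d_rH$), expand each $\chi^q$ via Proposition \ref{prop: expression for cohomology in terms of Eulerian numbers} and apply the AM--GM reindexing of Lemma \ref{lem: the core q=2 inequality}. The upper bounds for $\chi^3$ and $\chi^5$ take the form
\[
\chi^q \;\leq\; \frac{H^n}{(n-r)!}\, d_1\cdots d_r \sum_{\text{multi-indices}} c_\alpha\bigl(d_1 + \cdots + \alpha_i d_i + \cdots\bigr)^{n-r},
\]
with the $\alpha_i$ and $c_\alpha$ controlled by the bound $E(a,t) \leq (t+1)^a$. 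A matching lower bound on $\chi^2$ is obtained by isolating a dominant multi-index and applying Lemma \ref{lem: Eulerian asymptotic} from below, as in the proof of Lemma \ref{lem: the core q=2 inequality}. After cancelling the common factor $H^n d_1\cdots d_r/(n-r)!$, each of (1)--(5) again reduces to an exponential inequality $(A'(r)/B'(r))^{n-r} \geq (n/r)^{O(r)}$, which $n \geq 10r^4 + 1000$ handles; the extra $r^2$ factor in the lower bound on $n$ absorbs the $(n/r)^{2r}$-type prefactor produced by the AM--GM step.

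The main obstacle will be inequality (1). The four other summands on the right are bounded by quantities that decay exponentially relative to the left-hand side (modulo polynomial factors in $r$), but term (1) carries the linear factor $(n-r)/2$ and competes directly against $\chi^2/\chi^1 \sim ((r+2)/(r+1))^n$, whose base is $1 + 1/(r+1)$ and hence close to $1$. As at the end of the second case of Lemma \ref{lem: inequality to show mH(w-s) not 0 means s large}, this is handled by using $(1 + 1/(r+1))^{r+1} \geq 2$ together with the hypothesis $n \geq 10r^2+1000$ (respectively $n \geq 10r^4 + 1000$) to manufacture an exponential factor of size at least $2^{9r}$ or larger, which easily beats the linear $n$.
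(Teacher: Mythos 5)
Your proposal is correct and follows essentially the same route as the paper's proof: both dominate the right-hand side term by term against a lower bound for $|\chi(X,\Omega^2_{X/K})|/|\chi(X,\mathcal{O}_X)|$ (the paper splits into quarters and eighths rather than fifths), reduce the self-intersection case to Eulerian-number inequalities via Lemmas \ref{lem: Eulerian asymptotic} and \ref{lem: more useable asymptotic} with exactly the same governing base ratios $1+\tfrac{8r+4}{r^3+5r^2}$, $1+\tfrac{1}{r^2+4r+3}$, $1+\tfrac{2}{r^2+3r}$, and handle the general $d_1H,\dots,d_rH$ case by the same expansion, AM--GM reindexing, and dominant-index lower bound as in Lemma \ref{lem: the core q=2 inequality}, ending with exponential-versus-polynomial bookkeeping under $n\geq 10r^4+1000$. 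The only (inessential) quibble is your remark that term (1) is the main obstacle: its exponential base $1+\tfrac{1}{r+1}$ is in fact the largest of the five, and in the general case it is the $1+O(1/r^2)$-base terms that force the $r^4$ in the bound, but this does not affect the validity of the plan.
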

\begin{proof}
Again, we start with the self-intersection case. We observe that, using Lemma \ref{lem: more useable asymptotic}, noting that $r \geq 2$ and $(H^n)/n! \geq 1,$ that the left-hand side is at most $0.3 r(r+1) \left(1 + \frac{2}{r}\right)^n.$ So we will show that the second, third, and fourth terms are less than $\frac{3}{40} r(r+1)\left(1 + \frac{2}{r}\right)^n$ (a quarter of the lower bound of the left-hand side), and the first and fifth terms are less than $\frac{3}{80} r(r+1)\left(1 + \frac{2}{r}\right)^n$ 
\par For the first, we know that this term is at most $\frac{5(n-r)}{6} r (1 + \frac{1}{r})^n.$ But then note that this equals \[\left(\frac{3}{80} r(r+1)\left(1 + \frac{2}{r}\right)^n\right) \left(\frac{200(n-r)}{9(r+1)}\right) \left(\frac{r+1}{r+2}\right)^n.\] It thus suffices to show that $$\frac{200(n-r)}{9(r+1)} < \left(1 + \frac{1}{r+1}\right)^n.$$ With $r \geq 2,$ we see that $\left(\left(1 + \frac{1}{r+1}\right)^{r+1}\right)^\frac{n}{r+1},$ which again, since $(1 + \frac{1}{x})^x$ increases to $e,$ can be bounded below by $2^\frac{n}{r+1}$ (as $r \geq 2 > 1$).
\par Thus, it suffices to show that $\frac{200(n-r)}{9(r+1)} < 2^{\frac{n}{r+1}}.$ But we see that $2^x > \frac{200x}{9}$ for $x \geq 8,$ and certainly $\frac{n}{r+1} > 8$ for $n \geq 10r^2 + 1000;$ hence, the first term in the right-hand side is less than a quarter of $|\chi(X, \Omega^2)|/|\chi(X, \mathcal{O}_X)|.$
\par For the second term, using Lemma \ref{lem: more useable asymptotic}, we want to prove that \[\frac{3r(r+1)}{40} \left(1 + \frac{2}{r}\right)^n > \sqrt{18} \sqrt{\frac{(r+1)(r+2)(r+3)(r+4)}{72}} \left( 1 + \frac{4}{r+1} \right)^{n/2}.\] Rearranging this, we want to show that
\[\left( 1 + \frac{8r + 4}{r^3 + 5r^2}\right)^{n/2} > \frac{20}{3} \sqrt{\frac{(r+2)(r+3)(r+4)}{r^2(r+1)}}.\] That is, we want to show $\left( 1 + \frac{8r + 4}{r^3 + 5r^2}\right)^n > \frac{400}{9}\frac{(r+2)(r+3)(r+4)}{r^2(r+1)}.$ But we check that $\frac{8r + 4}{r^3 + 5r^2} > \frac{1}{r^2},$ and the left-hand side is bounded below by $\left(\left(1 + \frac{1}{r^2}\right)^{r^2}\right)^{n/r^2} > 2^{n/r^2} > 2^{10},$ which is enough to handle the second term since for $r \geq 2,$ we have $\frac{(r+2)(r+3)(r+4)}{r^2(r+1)} \leq \frac{4 \cdot 5 \cdot 6}{4 \cdot 3} = 10,$ so $\frac{400}{9}\frac{(r+2)(r+3)(r+4)}{r^2(r+1)} < 4000/9 < 1024.$
\par For the third term, applying Lemma \ref{lem: more useable asymptotic} again, it suffices to show \[\frac{3r(r+1)}{40} \left(1 + \frac{2}{r}\right)^n > 3 \sqrt{\frac{r^2(r+1)(r+2)}{12}} \left(1 + \frac{4r + 3}{r^2} \right)^{n/2},\] or that \[ \left(1 + \frac{1}{r^2 + 4r + 3}\right)^{n/2} > 40\sqrt{\frac{(r+2)}{12(r+1)}}.\] Note that the right-hand side, as $r \geq 2,$ is at most $40/3.$ So we want to show that $\left(1 + \frac{1}{r^2 + 4r + 3}\right)^{n} > \frac{1600}{9}.$ But again we see that the left-hand side is at least $(64/27)^{\frac{n}{r^2 + 4r + 3}}.$ Note that  $$\frac{n}{r^2 + 4r + 3} \geq \frac{10r^2 + 100}{r^2 + 4r + 3}  = 7 + \frac{3r^2 - 28 r + 73}{r^2 + 4r +3} > 7.$$ as $3r^2 - 28r + 73 > 0$ for all $r.$ Finally, as $1600/9 < 200 < (64/27)^7,$ we have bounded the third term.
\par For the fourth term, another application of Lemma \ref{lem: more useable asymptotic} shows that it's enough to show that \[\frac{3}{80} r(r+1) \left(1 + \frac{2}{r}\right)^n > \frac{5(r+1)(r+2)}{2} \left(1 + \frac{2}{r+1} \right)^n,\] which we can rearrange as proving the inequality \[\left(1 + \frac{2}{r^2 + 3r}\right)^n > \frac{200(r+2)}{3r}.\] But the right-hand side is bounded above by $\frac{400}{3}$ and the left-hand side is \[\left(\left(1 + \frac{2}{r^2 + 3r}\right)^{\frac{r^2 + 3r}{2}}\right)^{\frac{2n}{r^2 + 3r}} \geq \left(\frac{625}{256}\right)^{\frac{2n}{r^2 + 3r}} \geq 2^9 > \frac{400}{3},\] which is enough.
\par Finally, for the last term, note that $\frac{3r(r+1)}{80} \left(1 + \frac{2}{r}\right)^n > \frac{9}{40} 2^{2n/r} > 3$ for $2n/r > 5$ (which holds for $n > 10r^2 + 1000$). Combining the above five inequalities proves the lemma in the self-intersection case. 
\par We now consider the case of intersection of hypersurfaces which, in the Neron-Severi group, are represented by $d_1H, d_2H, \ldots, d_rH,$ for positive integers $d_1, d_2, \ldots, d_r.$ Again, suppose that $d_1 \leq d_2 \leq \cdots \leq d_r.$ We use the same argument as in the above lemma, by Lemma \ref{lem: the core q=2 inequality}. Using this lemma and AM-GM, we have \begin{align*}|\chi(X, \Omega^q_{X/K})| & = \frac{H^n}{n!} \sum\limits_{e \in F(r, n), |e| = n} \left(\sum\limits_{t \in F(r, q), |t| = q} \prod\limits_{i=1}^r E(e(i), t(i))\right)\binom{n}{e(1), e(2), \ldots, e(r)}d_1^{e(1)} \cdots d_r^{e(r)} \\ & = H^n \sum\limits_{e \in F(r, n), |e| = n} \left(\sum\limits_{t \in F(r, q), |t| = q} \prod\limits_{i=1}^r E(e(i), t(i))\right)\prod\limits_{i=1}^r \frac{d_i^{e(i)}}{e(i)!} \\ & = H^n \sum\limits_{e \in F(r, n), |e| = n} \left(\sum\limits_{t \in F(r, q), |t| = q} \prod\limits_{i=1}^r E(e(i), t(i))\right)\prod\limits_{i=1}^r \frac{d_i}{e(i) }\frac{d_i^{e(i) - 1}}{(e(i) - 1)!} \\ & \leq H^n \sum\limits_{e \in F(r, n), |e| = n} \left(\sum\limits_{t \in F(r, q), |t| = q} \prod\limits_{i=1}^r (t(i) + 1)^{e(i)}\right)\prod\limits_{i=1}^r d_i \frac{d_i^{e(i) - 1}}{(e(i) - 1)!} \\ & = \frac{H^n}{(n-r)!}d_1d_2 \cdots d_r \left(\sum\limits_{t \in F(r, q), |t| = q} \prod\limits_{i=1}^r (t(i) + 1)\left(\sum\limits_{i=1}^r (t(i) + 1)d_i\right)^{n-r}\right) \\ & \leq \frac{H^n}{(n-r)!}d_1d_2 \cdots d_r \left( |\{t \in F(r, q)||t| = q\}|(q/r + 1)^r \left(d_1 + d_2 + \cdots + d_{r-1} + (q + 1)d_r\right)^{n-r}\right) \\ & = \frac{H^n}{(n-r)!}d_1d_2 \cdots d_r \left( \binom{r + q - 1}{r-1}(q/r + 1)^r \left(d_1 + d_2 + \cdots + d_{r-1} + (q + 1)d_r\right)^{n-r}\right)\end{align*} 
\par Similarly, we first claim for a lower bound that $$\sum\limits_{t \in F(r, q), |t| = q} \prod\limits_{i=1}^r E(e(i), t(i)) \geq 0.6 \cdot (q+1)^{e(r)}.$$ To prove this, we use Lemma \ref{lem: more useable asymptotic}. If $e(r) \geq 3r^2 + 100,$ then we obtain the desired inequality, looking at the term where $t(r) = q$ and the rest of the $t(i)$ are zero. Otherwise, $e(r) < 3r^2 + 100.$ But then there exists some $i$ for which $e(i) \geq 3r^2 + 100,$ since $n = \sum e(i) \geq 10r^4 + 1000 > 3r^3 + 100r.$ Thus, in this second case we would have $$\sum\limits_{t \in F(r, q), |t| = q} E(e(i), t(i)) \geq 0.6 \cdot (q+1)^{e(i)} > 0.6 \cdot (q+1)^{e(r)}.$$
We use this to then show that \begin{align*}|\chi(X, \Omega^q_{X/K})| & = \frac{H^n}{n!} \sum\limits_{e \in F(r, n), |e| = n} \left(\sum\limits_{t \in F(r, q), |t| = q} \prod\limits_{i=1}^r E(e(i), t(i))\right)\binom{n}{e(1), e(2), \ldots, e(r)}d_1^{e(1)} \cdots d_r^{e(r)} \\ & = H^n \sum\limits_{e \in F(r, n), |e| = n} \left(\sum\limits_{t \in F(r, q), |t| = q} \prod\limits_{i=1}^r E(e(i), t(i))\right)\prod\limits_{i=1}^r \frac{d_i^{e(i)}}{e(i)!} \\ & = H^n \sum\limits_{e \in F(r, n), |e| = n} \left(\sum\limits_{t \in F(r, q), |t| = q} \prod\limits_{i=1}^r E(e(i), t(i))\right)\prod\limits_{i=1}^r \frac{d_i}{e(i) }\frac{d_i^{e(i) - 1}}{(e(i) - 1)!} \\ & \geq H^n \frac{r^r}{n^r} \sum\limits_{e \in F(r, n), |e| = n} \left( 0.6 \cdot (q + 1)^{e(r)}\right)\prod\limits_{i=1}^r d_i \frac{d_i^{e(i) - 1}}{(e(i) - 1)!} \\ & = \frac{0.6(q+1)r^r}{n^r}\frac{H^n}{(n-r)!}d_1d_2 \cdots d_r \left(\left(d_1 + d_2 + \cdots + d_{r-1} + (q+1)d_r\right)^{n-r}\right).\end{align*} 
\end{proof}
\par So similarly to the argument in the case where all of the $d_i$ are equal to each other, we show that each term on the right-hand side is at most a fifth of the left-hand side. That is, letting $D = d_1 + d_2 + \cdots + d_r,$ we want to show these inequalities:
\begin{enumerate}
    \item $0.2 \cdot \frac{0.6\cdot 3r^r}{n^r} (D + 2d_r)^{n-r} > \frac{n-r}{2}\binom{r}{r-1}(1/r + 1)^r (D + d_r)^{n-r},$
    \item $0.2 \cdot \frac{0.6 \cdot 3r^r}{n^r} (D+2d_r)^{n-r} \cdot \sqrt{\frac{0.6 \cdot 2r^r}{n^r} (D+d_r)^{n-r}} > D^{n-r} \sqrt{18 \binom{r+4}{r-1}(5/r + 1)^r (D + 5d_r)^{n-r}},$
    \item $0.2 \cdot \frac{0.6 \cdot 3r^r}{n^r} (D+2d_r)^{n-r} > \sqrt{\frac{9}{2}\binom{r+2}{r-1}(3/r+1)^r (D+3d_r)^{n-r} \binom{r}{r-1}(1/r + 1)^r(D + d_r)^{n-r}},$
    \item $0.2 \cdot 2 \cdot 3 (\frac{0.6 r^r}{n^r})^2 (D + d_r)^{n-r}(D + 2d_r)^{n-r} > 9\binom{r + 2}{r-1}(3/r + 1)^r D^{n-r}(D + 3d_r)^{n-r},$
    \item $0.2 \cdot \frac{0.6\cdot 3r^r}{n^r} (D + 2d_r)^{n-r} > 3D^{n-r},$
\end{enumerate}
Noting that $D \leq rd_r,$ and using the inequality $\frac{r+i - 1}{i} \leq r$ for $i \geq 1,$ to prove the above inequalities it suffices to prove the following inequalities:
\begin{enumerate}
    \item $0.72 (1 + \frac{1}{r+1})^{n-r} >  \frac{n^{r+1}}{r^{r-1}} (1/r + 1)^r,$
    \item $0.00864 (1 + \frac{8r + 4}{r^3 + 5r^2})^{n-r} > \frac{n^{3r}}{r^{3r-5}} (5/r + 1)^r,$
    \item $0.0288 (1 + \frac{1}{r^2 + 4r + 3})^{(n-r)} > \frac{n^{2r}}{r^{2r-4}}(3/r+1)^r(1/r + 1)^r,$
    \item $0.048 (1 + \frac{2}{r^2 + 3r})^{n-r} > \frac{n^{2r}}{r^{2r-3}}(3/r + 1)^r,$
    \item $0.12 (1 + \frac{2}{r})^{n-r} > \frac{n^r}{r^r}.$
\end{enumerate}
Now, observe that, again, $(1 + \frac{1}{x})^x \leq e$ for positive $x,$ and for $x \geq 1,$ we know that $(1 + \frac{1}{x})^x \geq 2.$ Furthermore, for $x \geq 10,$ this is at least $2.5.$ So to prove these inequalities, it suffices to prove these five inequalities:
\begin{enumerate}
    \item $0.72 \cdot 2^{\frac{n-r}{r+1}} >  e \frac{n^{r+1}}{r^{r-1}},$
    \item $0.00864 \cdot 2^{\frac{(n-r)(8r + 4)}{r^3 + 5r^2}} > \frac{n^{3r}}{r^{3r-5}} e^5,$
    \item $0.0288 \cdot 2.5^{\frac{(n-r)}{r^2 + 4r + 3}} > \frac{n^{2r}}{r^{2r-4}}e^4,$
    \item $0.048 \cdot 2.5^{\frac{2(n-r)}{r^2 + 3r}} > \frac{n^{2r}}{r^{2r-3}}e^3,$
    \item $0.12 \cdot 2^{\frac{2(n-r)}{r}} > \frac{n^r}{r^r}.$
\end{enumerate}
Finally, we note that $\frac{8r + 4}{r^3 + 5r^2} \geq \frac{2}{r^2}, \frac{1}{r^2 + 3r} \geq \frac{1}{4r^2},$ and $\frac{1}{r^2 + 4r + 3} \geq \frac{1}{2r^2}$ if $r \geq 5.$ Furthermore, observe that for $r \geq 2,$ we have that \[\frac{r}{r+1}, \frac{r}{r^2 + 3r}, \frac{r}{r^2 + 4r + 3} \leq 1, \frac{r(8r + 4)}{r^3 + 5r^2} \leq 2.\] Thus, it suffices to prove these inequalites:
\begin{enumerate}
    \item $2^{\frac{n}{r+1}} > 10 \frac{n^{r+1}}{r^{r-1}},$
    \item $2^{\frac{2n}{r^2}} > 72000\frac{n^{3r}}{r^{3r-5}},$
    \item $2.5^{\frac{n}{2r^2}} >4015\frac{n^{2r}}{r^{2r-4}}$ if $r \geq 5,$ $2.5^{\frac{n}{r^2 + 4r + 3}} >4015\frac{n^{2r}}{r^{2r-4}}$ otherwise.
    \item $2.5^{\frac{n}{2r^2}} > 900\frac{n^{2r}}{r^{2r-3}},$
    \item $2^{\frac{2n}{r}} > 17\frac{n^r}{r^r}.$
\end{enumerate}
We observe that the third inequality implies the other inequalities, in the assumption that $\frac{n}{r} \geq 1$ and $r \geq 2.$ Let $x = \frac{n}{r^2};$ we then want to show that $$2.5^{x/2} \geq 4015 x^{2r} r^{2r + 4}$$ for $x \geq 10r^2 + \frac{1000}{r^2}.$ Taking $r$th roots, and letting $y = \frac{x}{r},$ we thus want to show that $2.5^{y/2} \geq 4015^{1/r} y^2 r^{4 + \frac{4}{r}}$ for $y \geq 10r + \frac{1000}{r^3}.$ First, suppose $r \geq 5,$ so then $4015^{1/r} \leq 5.5,$ and $4 + \frac{4}{r} \leq 5.$ Then, we want to show that $2.5^{y/2} \geq 5.5y^2r^5$ for $y \geq 10r + \frac{1000}{r^3}.$ Note first that, when $y = 10r,$ the inequality we want to prove reduces to $2.5^{5r} \geq 550 r^7,$ which is true for $r \geq 5.$ Furthermore, we can check that this inequality holding for $y = 10r$ means it holds for larger $y,$ by checking with the first and second derivatives of the two sides.
\par Now, say $r \leq 4.$ Running the same reduction as above for the inequality $2.5^{\frac{n}{r^2 + 4r + 3}} > 4015 \frac{n^{2r}}{r^{2r - 4}},$ we find that we want to show these inequalities:
\begin{enumerate}
    \item When $r = 2,$ $2.5^{4y/15} > 64 \cdot y^22^6,$
    \item When $r = 3,$ $2.5^{3y/8} > 20 \cdot y^23^6,$
    \item When $r = 4,$ $2.5^{16y/35} > 8 \cdot y^24^5,$
\end{enumerate}
These hold when $y \geq 120, y \geq 60,$ and $y \geq 40,$ respectively, which we noted is enough to prove the lemma.
\printbibliography
\end{document}